\newenvironment{thm}[1]
  {\innercustomthm}
  {\endinnercustomthm}
\theoremstyle{plain}
		\newtheorem{theorem}{Theorem}[section]
		\newtheorem{lemma}[theorem]{Lemma}
		\newtheorem{corollary}[theorem]{Corollary}
		\newtheorem{proposition}[theorem]{Proposition}
		\newtheorem{conjecture}[theorem]{Conjecture}
		\newtheorem*{ntheorem}{Theorem}
		\newtheorem*{nclaim}{Claim}
		\newtheorem{fact}[theorem]{Fact}
\theoremstyle{definition}
		\newtheorem{definition}[theorem]{Definition}
\theoremstyle{remark}
		\newtheorem*{remark}{Remark}
		\newtheorem{example}[theorem]{Example}
\newcommand{\A}{{\mathbb{A}}}
\newcommand{\C}{{\mathbb{C}}}
\newcommand{\E}{{\mathbb{E}}}
\renewcommand{\H}{{\mathbb{H}}}
\newcommand{\K}{{\mathbb{K}}}
\renewcommand{\P}{{\mathbb{P}}}
\newcommand{\Q}{{\mathbb{Q}}}
\newcommand{\R}{{\mathbb{R}}}
\newcommand{\Z}{{\mathbb{Z}}}
\newcommand{\Cb}{{\mathbf{C}}}
\newcommand{\Fcal}{{\mathcal{F}}}
\newcommand{\Ocal}{{\mathcal{O}}}
\newcommand{\Pcal}{{\mathcal{P}}}
\newcommand{\Scal}{{\mathcal{S}}}
\renewcommand{\c}{\textup{c}}
\newcommand{\del}{\partial}
\newcommand{\id}{{\textup{id}}}
\DeclareMathOperator{\Diff}{Diff}
\DeclareMathOperator{\Hom}{Hom}
\DeclareMathOperator{\sheafhom}{\mathcal{H}\kern -.5pt \emph{om}}
\DeclareMathOperator{\Img}{Im}
\DeclareMathOperator{\Isom}{Isom}
\DeclareMathOperator{\length}{length}
\DeclareMathOperator{\PSL}{PSL}
\DeclareMathOperator{\Rep}{Rep}
\DeclareMathOperator{\SL}{SL}
\DeclareMathOperator{\SU}{SU}
\DeclareMathOperator{\Spec}{Spec}
\DeclareMathOperator{\vol}{vol}
\DeclareMathOperator{\sys}{sys}
\DeclareMathOperator{\tr}{tr}
\DeclareMathOperator{\Vol}{Vol}
\DeclareMathOperator{\II}{II}
\DeclareMathOperator{\Jac}{Jac}
\DeclareMathOperator{\dist}{dist}
\DeclareMathOperator{\Height}{Height}
\renewcommand{\inf}{\textup{inf}}
\newcommand{\git}{\mathbin{
  \mathchoice{/\mkern-6mu/}
    {/\mkern-6mu/}
    {/\mkern-5mu/}
    {/\mkern-5mu/}}}
\begin{document}
\title{Nonlinear descent on moduli of local systems}
\author{Junho Peter Whang}
\address{Dept of Mathematics, Massachusetts Institute of Technology, Cambridge MA}
\email{jwhang@mit.edu}
\date{\today}

\begin{abstract}
We establish a structure theorem for the integral points on moduli of special linear rank two local systems over surfaces, using mapping class group descent and boundedness results for systoles of local systems.
\end{abstract}

\maketitle

\setcounter{tocdepth}{1}
\tableofcontents

\section{Introduction} \label{sect:1}
\subsection{\unskip}  \label{sect:1.1}
This paper initiates our Diophantine study of moduli spaces for local systems on surfaces and their mapping class group dynamics. Let us fix a smooth compact oriented surface $\Sigma$ of genus $g\geq0$ with $n\geq0$ boundary curves, labeled $c_1,\dots,c_n$, satisfying $3g+n-3>0$. For each $k=(k_1,\dots,k_n)\in\C^n$, let $X_k$ denote the coarse moduli space of $\SL_2(\C)$-local systems on $\Sigma$ with trace $k_i$ along $c_i$. Each $X_k$ is an irreducible complex affine algebraic variety of dimension $6g+2n-6$, and we showed in \cite{whang} that it is log Calabi-Yau if the surface has nonempty boundary. For $k\in\Z^n$, the variety $X_k$ admits a natural model over $\Z$, its integral points corresponding to those local systems whose monodromy elements all have integer trace. The pure mapping class group of $\Sigma$ acts on $X_k$ via pullback of local systems, preserving the integral structure.

Our main Diophantine result for $X_k(\Z)$ is the following. Let us say that an algebraic variety $Z$ over $\C$ is \emph{parabolic} if each $\C$-point of $Z$ is in the image of some nonconstant morphism $\A^1\to Z$. A related notion is that of a log uniruled variety, defined in \cite{km}. Let us define a subvariety of $X_k$ to be \emph{degenerate} if it is contained in a parabolic subvariety of $X_k$, and \emph{nondegenerate} otherwise. These definitions and our formulation of Theorem \ref{theorem1} reflect considerations of Diophantine geometry for log Calabi-Yau varieties, further discussed in Section \ref{sect:1.3}.

\begin{theorem}
\label{theorem1}
The nondegenerate integral points in $X_k(\Z)$ consist of finitely many mapping class group orbits. There exists a parabolic proper closed subvariety of $X_k$ whose orbit gives precisely the locus of degenerate points on $X_k$.
\end{theorem}

In particular, $X_k(\Z)$ is generated from finitely many proper closed irreducible subvarieties of $X_k$ under the mapping class group. We shall obtain Theorem \ref{theorem1} by combining the results of this paper (outlined in Section \ref{sect:1.2}) with our work in \cite{whang4}, where the second part of Theorem \ref{theorem1} is proved together with a modular characterization of the degenerate points on $X_k$. The latter is summarized in the following paragraph.

Throughout this paper, by an \emph{essential curve} on $\Sigma$ we shall mean a simple closed curve on the interior of $\Sigma$ which cannot be continuously deformed into a point or a boundary component of $\Sigma$ (see Section \ref{sect:2.1} for our terminology). By \cite[Theorem 3.2]{whang4}, the degenerate points of $X_k$ correspond precisely to the local systems on $\Sigma$ which have central or parabolic monodromy (i.e.~have trace $\pm2$) along an essential curve or, if $(g,n,k)\neq(1,1,2)$, have parabolic (i.e.~reducible) restriction to some nontrivial ``pair of pants'' (a genus zero subsurface with three boundary curves each of which is either essential on $\Sigma$ or a boundary curve on $\Sigma$).

Diophantine problems on homogeneous varieties endowed with linear arithmetic group actions have been extensively studied; see \cite{sarnak}, \cite{drs} and references therein. A foundational result in this study is the finiteness of class numbers (i.e.~number of integral orbits), proved in general by Borel and Harish-Chandra \cite{bh} building on the works of Gauss, Hermite, Minkowski, Siegel, and others. Theorem \ref{theorem1} provides an analogue of this in our nonlinear and inhomogeneous setting. Ghosh and Sarnak \cite{gs} recently examined the behavior of the ``class numbers'' of $X_k$, in a slightly modified form, for varying $k$ in the case $(g,n)=(1,1)$ of a one holed torus.

When $(g,n)=(1,1)$, the variety $X_k$ is an affine algebraic surface with equation
\begin{align}
\label{eq:markoff}
x^2+y^2+z^2-xyz-2=k.
\end{align}
A Diophantine equation of this type and its nonlinear descent were first studied by Markoff \cite{markoff} in his 1880 work on binary quadratic forms, and versions have been frequently revisited \cite{hurwitz}, \cite{mordell}. (The Markoff uniqueness conjecture of Frobenius \cite{frob} states that a positive integral solution to (\ref{eq:markoff}) in the case $k=-2$ is determined up to permutation by its largest coordinate.) The asymptotic growth of (nondegenerate) integral solutions to Markoff-Hurwitz equations were analyzed in \cite{zagier} (see also \cite{mirzakhani}), \cite{baragar}, \cite{gmr}, and strong approximation for the Markoff equation was studied in \cite{bgs}, \cite{bgs1}. Recent works \cite{lm}, \cite{cwx} studied the Hasse principle and integral Brauer-Manin obstructions for the equations (\ref{eq:markoff}). These works inspire natural questions to be explored in the general context of moduli spaces for local systems over surfaces.

Theorem \ref{theorem1} also shares similarities with classical finite generation results for integral points on log Calabi-Yau varieties of linear type, such as algebraic tori and abelian varieties. In particular, the works of Vojta \cite{vojta2}, \cite{vojta3} and Faltings \cite{faltings} concerning integral points on subvarieties of semiabelian varieties motivate a similar Diophantine analysis of subvarieties on our moduli space $X_k$. As a first step in this direction, we show in \cite{whang4} that the integral points of any geometrically irreducible nondegenerate algebraic curve on $X_k$ can be effectively determined, and moreover satisfy a structure theorem.

\subsection{Main results}  \label{sect:1.2}
We now describe the contents of this paper and its main results leading up to Theorem \ref{theorem1}. Following Fermat, in this paper the term ``descent'' refers to a method of producing, from given (integral) solutions to an equation, new solutions of smaller height or magnitude, as a means to understand the structure of the full set of solutions. In our context, the solutions refer to points on the moduli space $X_k$ and the descent method is the mapping class group action. We emphasize that the moduli interpretation of $X_k$ plays a crucial role in our theory of descent, as it enables us to translate dynamical statements about the mapping class group action on $X_k$ into intrinsic statements on local systems, which are amenable to differential geometric tools.

In Section \ref{sect:2}, we collect relevant background on surfaces and moduli of local systems. The dynamical aspects of the mapping class group action on the complex points $X_k(\C)$ are not fully understood, but have been studied on certain special subloci. These include (but are not limited to) the locus of $\SU(2)$-local systems (see \cite{goldman}) and the Teichm\"uller locus parametrizing Fuchsian representations arising from marked hyperbolic structures on $\Sigma$ with geodesic boundary. This paper concerns the descent properties of the dynamics on $X_k(\C)$ beyond the classical setting.

In Section \ref{sect:3}, as a first step towards Theorem \ref{theorem1}, we introduce the notion of systole for $\SL_2$-local systems, and study its boundedness properties. The first main result of the section is Theorem \ref{sysarch}, concerning systoles of $\SL_2(\C)$-local systems, which we obtain using twisted harmonic maps from $\Sigma$ to the hyperbolic three-space $\H^3$ and systolic inequalities for Riemannian surfaces. A special case of the theorem is the following result. As before, an essential curve on $\Sigma$ is a simple closed curve which cannot be continuously deformed into a point or a boundary curve.

\begin{theorem}
\label{theorem2}
For each $B\geq0$, there is $A\geq0$ depending continuously on $B$ such that, given any representation $\rho:\pi_1\Sigma\to\SL_2(\C)$ whose boundary traces all have absolute value at most $B$, there is an essential curve $a\subset\Sigma$ with $|\tr\rho(a)|\leq A$.
\end{theorem}

In the special case of Fuchsian representations, this recovers Bers's boundedness theorem \cite{bers} for systoles of hyperbolic surfaces. When $\Sigma$ is a closed surface, Theorem \ref{theorem2} can also be deduced by combining Bers's theorem with a domination result of Deroin and Tholozan \cite{dt} which also uses harmonic maps. We remark in passing that an immediate byproduct of Theorem \ref{theorem2} is a weaker version of Theorem \ref{theorem1}: on each of our moduli spaces $X_k$, the set of integer points lies in the mapping class group orbit of a finite union of irreducible proper closed subvarieties.

Also in Section \ref{sect:3}, we use Bruhat-Tits trees and systolic inequalities to obtain an analogue (Theorem \ref{sysprop}) of Theorem \ref{sysarch} in a nonarchimedean setting. A notable special case is the following, which we mention as it plays a crucial role in \cite{whang4}.

\begin{theorem}
\label{theorem3}
Let $\Ocal$ be a discrete valuation ring with fraction field $F$. Given any representation $\rho:\pi_1\Sigma\to\SL_2(F)$ whose boundary traces all take values in $\Ocal$, there is an essential curve $a\subset\Sigma$ with $\tr\rho(a)\in\Ocal$.
\end{theorem}

Theorem \ref{theorem3} appears to have been known folklorically (certainly in the case of closed surfaces), and may be approached via Stallings's method involving surface maps to graphs. A higher-dimensional generalization of this approach can be found in Culler-Shalen theory \cite{cs} where, for example, group actions on trees are used to construct incompressible surfaces in three-manifolds. The systolic approach admits a different generalization to higher dimensions, which might be of independent interest (see remark at the end of Section \ref{sect:3.3.2} for details).

In Section \ref{sect:4}, we apply Theorem \ref{theorem2} and prove a compactness criterion for local systems, which extends Mumford's compactness criterion \cite{mumford} on moduli of closed Riemann surfaces as well as a related work of Bowditch, Maclachlan, and Reid \cite[Theorem 2.1]{bmr}. To state the result, it is useful to introduce the following notation. Let $X$ be the coarse moduli space of all $\SL_2(\C)$-local systems on $\Sigma$. Given arbitrary subsets $K\subseteq\C^n$ and $A\subseteq\C$, let $X_K(A)$ denote the set of local systems in $X(\C)$ whose boundary traces are in $K$ and whose trace along any essential curve on $\Sigma$ lies in $A$ (Lemma \ref{pointlem} shows that this agrees with the usual algebro-geometric notation when $A$ is a subring of $\C$ and $K$ is a singleton in $A^n$). Given subsets $A,B\subseteq \C$, let
$$\dist(A,B)=\inf\{|a-b|:a\in A,b\in B\}.$$
We shall endow the sets of complex points of algebraic varieties with the analytic topology. Let $\Gamma$ denote the pure mapping class group of the surface $\Sigma$.

\begin{theorem}
\label{theorem4}
Let $K\subset\C^n$ be a compact subset. Suppose that
\begin{enumerate}
	\item $A\subset\R$ and $\dist(A,\{\pm2\})>0$, or
	\item $A\subset\C$ and $\dist(A,[-2,2])>0$.
\end{enumerate}
There is then a compact subset $L\subseteq X_K(\C)$ satisfying
$X_K(A)\subseteq\Gamma\cdot L.$
\end{theorem}

The intuition behind the proof of Theorem \ref{theorem4} is the structure of an ``integrable system,'' associated to a pants decomposition of the surface, on the moduli spaces. This generalizes the familiar Fenchel-Nielsen coordinates on Teichm\"uller space, and was also notably used by Goldman \cite{goldman} to prove the ergodicity of mapping class group dynamics on moduli spaces for $\SU(2)$-local systems on surfaces. In practice, our proof of Theorem \ref{theorem4} follows an inductive argument on the topological complexity of the underlying surface (see also \cite{abo} which considers a type of ``geometric recursion'' in a different context).

When the set $A$ in Theorem \ref{theorem4} is moreover closed and discrete in $\R$ or $\C$, we deduce that
$X_K(A)$ consists of finitely many mapping class group orbits (Corollary \ref{finiteness}). This extends McKean's finiteness theorem \cite{mckean} for length isospectral families of closed Riemann surfaces. Specializing to $A = \Z\setminus\{\pm2\}$, we conclude that, for each $k\in\Z^n$, the set of
integral points on $X_k$ with no trace equal to $\pm2$ along essential curves of $\Sigma$ consists of finitely many mapping class group orbits. Since the set of nondegenerate points in $X_k(\Z)$ is contained in $X_k(\Z\setminus\{\pm2\})$ by the characterization given in Section \ref{sect:1.1}, this implies the first part of Theorem \ref{theorem1}. We note that a more economical proof of Theorem \ref{theorem1} is also given in Section \ref{sect:4.0}.

For each positive integer $d$, let $O_d$ denote the ring of integers in the imaginary quadratic ring $\Q(\sqrt{-d})$. A weak analogue of Theorem \ref{theorem1} can be proved for the set of all imaginary quadratic integral points on $X_k$ by considering $A=\bigcup_{d>0}O_d\setminus[-2,2]$ in Theorem \ref{theorem4}; in the case $(g,n)=(1,1)$, this was observed by Silverman \cite{silverman}. In particular, there is a proper closed subvariety $Z$ of $X_k$ with
$$\bigcup_{d>0}X_k(O_d)=\Gamma\cdot \bigcup_{d>0}Z(O_d)$$
where both sides are viewed as subsets of $X_k(\C)$.

Finally, in Section \ref{sect:5}, we collect further remarks on Theorem \ref{theorem1}. We briefly visit the theory of arithmetic hyperbolic surfaces in Section \ref{sect:5.1}, and derive elementary observations on the behavior of integral points on $X_k$. In Section \ref{sect:5.2}, we give an alternative proof (inspired by a suggestion of Curtis McMullen) of the finitude of mapping class group orbits for nondegenerate \emph{faithful} representations in $X_k(\Z)$ for surfaces with boundary.

\subsection{Diophantine remarks} \label{sect:1.3}
We now collect a number of Diophantine remarks to provide context and motivation for our work. Given a pair $(Z,D)$ consisting of a normal irreducible projective variety $Z$ and a reduced effective divisor $D$, its log canonical divisor $K_Z+D$ contains information about the global geometry of the open variety $V=Z\setminus D$. For instance, if $Z$ is a smooth irreducible projective curve and $D$ a finite collection of points, we have
$$\deg(K_Z+D)=-\chi(V(\C))$$
where the right hand side is the negative of the Euler characteristic of the Riemann surface $V(\C)$. Thus, in dimension one, the positivity of the log canonical divisor captures the negativity in curvature. The situation for higher dimensional varieties, while more complicated, follows a similar general pattern. For this reason, algebraic varieties with antiample, trivial (or torsion), and ample log canonical divisors form families whose Diophantine study of integral points deserves special focus.\footnote{In the projective setting, a guiding conjecture in birational geometry is that these classes of varieties form the ``building blocks'' of all projective varieties in a suitable sense.}

When $K_Z+D$ is \emph{positive} (e.g.~ample), a general expectation is that $V$ should have \emph{few} integral points. The celebrated theorems of Siegel and Faltings show that a smooth hyperbolic algebraic curve $V$, with an integral model over a number field, has at most finitely many integral ($=$ rational if $V$ is projective) points. While there are fewer general results in higher dimensions, several conjectures have been formulated in support of the expectation. One instance is the following.

\begin{conjecture}[Lang-Vojta]
\label{conjlv}
Suppose that $Z$ is a smooth irreducible projective variety with an integral model over a number field, $D$ an effective normal-crossing divisor on $Z$, and $V=Z\setminus D$. If $K_{Z}+D$ is (almost) ample, then the integral points of $V$ are contained in a proper Zariski closed subset of $V$.
\end{conjecture}

Note that the quantification of being ``few'' in higher dimensions is the failure to be Zariski dense. We remark that the Lang-Vojta conjecture is a consequence of a more general conjecture on Diophantine approximations, made by Vojta in analogy with Nevanlinna theory (see \cite{vojta} for details). For projective varieties, Lang \cite{lang} examined the notions of hyperbolicity in complex analytic geometry and put forth several conjectures such as the following, refining some features of the above:

\begin{conjecture}[Lang]
\label{conjl}
Suppose $V$ is a smooth irreducible projective variety of general type over a number field. The union $E$ of all non-constant rational images of $\P^1$ and abelian varieties in $V$ is a proper Zariski closed subset, and the complement of $V\setminus E$ has at most finitely many rational points over any number field.
\end{conjecture}

On the other hand, for pairs $(Z,D)$ with \emph{negative} (e.g.~antiample) log canonical divisors, an expectation is that $V$ should have \emph{many} integral points, at least after suitable extensions of number fields (potential density). In this setting, quantitative predictions such as the Batyrev-Manin conjecture have been made about the abundance of integral points; in some cases, the Hardy-Littlewood and related methods have been used to verify (or form the basis of) such predictions.

The above discussion suggests that the varieties with trivial log canonical divisor form the threshold or boundary case, where the expected Diophantine analysis of integral points is least well understood even conjecturally.

\begin{definition}
A normal quasiprojective variety $V$ is \emph{log Calabi-Yau} if it possesses a normal projective compactification $Z$ with canonical divisor $K_Z$ and reduced boundary divisor $D=Z\setminus V$ such that
$K_Z+D\sim 0$.
\end{definition}

\begin{remark}
In birational geometry, it is more customary to work with the pair $(Z, D)$, referred to as a \emph{(log) Calabi-Yau pair}. Often, one also imposes additional hypotheses on the singularity type of the pair $(Z,D)$, which we shall not do here.
\end{remark}

Examples of log Calabi-Yau varieties include algebraic tori, abelian varieties, projective K3 surfaces, and generic hypersurfaces of degree $n$ in $\A^n$. Algebraic tori and abelian varieties have been classically studied from the Diophantine perspective. Basic finite generation results for their integral points, such as the Dirichlet unit theorem and the Mordell-Weil theorem, reflect the richness of their \emph{dynamics} and provide bases for further arithmetic investigations. Other well-studied log Calabi-Yau (and more general) varieties include those among affine homogeneous varieties with linear arithmetic group actions, where the theorem of Borel--Harish-Chandra \cite{bh} provides the finitude of integral orbits.

The above results inspire us to study the behavior of integral points on other log Calabi-Yau varieties having rich dynamics. Let us make the following definition. Recall from Section \ref{sect:1.1} that an algebraic variety $Z$ over $\C$ is defined to be \emph{parabolic} if it is covered by images of nonconstant morphisms $\A^1\to Z$.

\begin{definition}
A subvariety of a log Calabi-Yau variety $V$ is \emph{degenerate} if it lies in a parabolic subvariety of $V$.
\end{definition}

Conjectures \ref{conjlv} and \ref{conjl} encourage us to ask, given a log Calabi-Yau variety $V$ with a model over $\Z$:
\begin{itemize}
	\item Is the set $V(\Z)$ generated from a proper Zariski closed subset?
	\item Is the set of degenerate points on $V$ generated from a parabolic subvariety?
	\item Is the set of nondegenerate points in $V(\Z)$ finitely generated? 
\end{itemize}
Here, to speak of ``generating'' points one assumes given some dynamical structure on $V$ compatible with the integral structure. While the above questions have affirmative answers for the log Calabi-Yau varieties of linear or homogeneous type mentioned above (and in fact such varieties have no parabolic subvarieties), for a general log Calabi-Yau variety this need not be the case (at least depending on the notion of \emph{dynamics} chosen). It therefore seems worthwhile to isolate the cases of positive answer as instances of \emph{arithmeticity}.

Let $\Sigma$ be a surface as in Section \ref{sect:1.1}, and suppose that the boundary of $\Sigma$ is nonempty. Let $X$ denote the moduli space of all $\SL_2(\C)$-local systems on $\Sigma$ (as in Section \ref{sect:1.2}), and for each $k\in\Z^n$ let $X_k\subset X$ be the subvariety given by prescribing the boundary traces of local systems (as in Section \ref{sect:1.1}). It is easy to see that $X$ is a parabolic variety. On the other hand, we showed in \cite{whang} that each $X_k$ is log Calabi-Yau, and the above discussion applies. Thus, Theorem \ref{theorem1} states that the mapping class group dynamics on each $X_k$ is arithmetic over the integers. We remark that, unlike linear arithmetic groups, the mapping class group of a
surface is defined with no \emph{a priori} connection to the integers, so it is surprising to find that such finiteness theorems as Theorem \ref{theorem1} should hold. Another point of note is that our method uses discreteness in $\C$ heavily, and hence is naturally restricted points of $X_k$ with values in $\Z$ or the integers of imaginary quadratic fields, as far as the analysis of integral points is concerned.

\subsection{Acknowledgements}  \label{sect:1.4}
This work was done as part of the author's Ph.D.~thesis at Princeton University. I thank my advisor Peter Sarnak and Phillip Griffiths for their guidance, unwavering encouragement, and generous sharing of insight. I also thank Sophie Morel for numerous helpful conversations during part of this work. I thank Curtis McMullen and Alan Reid for valuable suggestions and discussions, and I thank the referee for suggesting numerous improvements to the paper.

\section{Background}  \label{sect:2}
This section collects relevant background on surfaces and their moduli of local systems. It also serves to set the main notations and conventions in the paper.

\subsection{Surfaces}  \label{sect:2.1}
A \emph{surface} is an oriented two dimensional smooth manifold, which we assume to be compact with at most finitely many boundary components unless otherwise indicated. A connected surface is said to have type $(g,n)$ if it has genus $g$ and has $n$ boundary components. A \emph{curve} on a surface is an embedded copy of an unoriented circle, which we shall tacitly assume to be smooth in appropriate contexts. Given a surface $\Sigma$, we shall say that a curve $a\subset\Sigma$ is \emph{nondegenerate} if it does not bound a disk, and \emph{essential} if it is nondegenerate and is disjoint from, and not isotopic to, a boundary curve of $\Sigma$.

A \emph{multicurve} on $\Sigma$ is a finite union of disjoint curves on $\Sigma$. It is said to be \emph{nondegenerate}, resp.~\emph{essential}, if each of its components is. A \emph{pants decomposition} of $\Sigma$ is a maximal essential multicurve with pairwise nonisotopic components. If $\Sigma$ is a surface of type $(g,n)$ with $3g+n-3>0$, then any pants decomposition of $\Sigma$ consists of $3g+n-3$ essential curves. Given a surface $\Sigma$ and an essential multicurve $Q\subset\Sigma$, we denote by $\Sigma|Q$ the surface obtained by cutting $\Sigma$ along the curves in $Q$. An essential curve $a\subset\Sigma$ is \emph{separating} if the two boundary curves of $\Sigma|a$ corresponding to $a$ are on different connected components, and \emph{nonseparating} otherwise.

\subsubsection{Optimal sequence of generators} \label{sect:2.1.1}
Let $\Sigma$ be a surface of type $(g,n)$, and choose a base point $x\in\Sigma$. We have the \emph{standard presentation} of the fundamental group
\begin{equation}
\pi_1(\Sigma,x)=\langle \alpha_1,\beta_1',\dots,\alpha_g,\beta_g',\gamma_1,\dots,\gamma_n|[\alpha_1,\beta_1']\cdots[\alpha_g,\beta_g']\gamma_1\cdots \gamma_n\rangle
\label{eq:stdprs}
\end{equation}
where in particular $\gamma_1,\dots,\gamma_n$ correspond to loops around the boundary curves of $\Sigma$. For $i=1,\dots,g$, let $\beta_i$ be the based loop traversing $\beta_i'$ in the opposite direction. We can choose the sequence of generating loops $(\alpha_1,\beta_1,\dots,\alpha_g,\beta_g,\gamma_1,\dots,\gamma_n)$ so that it satisfies the following:
\begin{enumerate}
	\item[(1)]each loop in the sequence is simple,	
	\item[(2)]any two distinct loops in the sequence intersect exactly once (at $x$), and
	\item[(3)]every product of distinct elements in the sequence preserving the cyclic ordering can be represented by a simple loop in $\Sigma$.
\end{enumerate}
Some examples of products alluded to in (3) are $\alpha_1\beta_g$, $\alpha_1\alpha_2\beta_2\beta_g$, and $\beta_g\gamma_n\alpha_1$.

\begin{definition}
\label{optimal}
An \emph{optimal sequence of generators} for $\pi_1\Sigma$ is a sequence of loops $(a_1,b_1,\dots,a_g,b_g,c_1,\dots,c_n)$ obtained from a standard presentation of $\pi_1\Sigma$ as above.
\end{definition}

Figure \ref{fig1} shows an optimal sequence of generators for $(g,n)=(2,1)$.

\begin{figure}[ht]
    \centering
    \includegraphics{./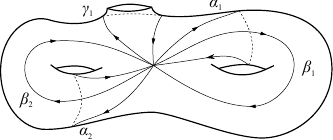}
    \caption{Optimal sequence of generators for $(g,n)=(2,1)$}
    \label{fig1}
\end{figure}

\subsubsection{Mapping class group} \label{sect:2.1.2}
Given a surface $\Sigma$, let $\Gamma=\Gamma(\Sigma)=\pi_0\Diff^+(\Sigma,\del\Sigma)$ denote its mapping class group. By definition, it is the group of isotopy classes of orientation preserving diffeomorphisms of $\Sigma$ fixing the boundary of $\Sigma$ pointwise. Given a (simple closed) curve $a\subset\Sigma$ disjoint from $\del\Sigma$, the associated (left) \emph{Dehn twist} $\tau_a\in\Gamma$ on $\Sigma$ is defined as follows. Let $S^1=\{z\in\C:|z|=1\}$ be the unit circle. Let $\tau$
be the diffeomorphism from $S^1\times[0,1]$ to itself given by $(z,t)\mapsto (ze^{2\pi i\xi(t)},t)$ where $\xi(t)$  is a smooth bump function of $t\in[0,1]$ that is 0 on a neighborhood of 0
and 1 on a neighborhood of 1. Choose a closed tubular neighborhood $N$ of $a$ in $\Sigma$, and an orientation preserving diffeomorphism $f:N\to S^1\times[0,1]$. The Dehn twist $\tau_a$ is given by
$$\tau_a(x)=\left\{\begin{array}{l l} f^{-1}\circ\tau\circ f(x)&\text{if $x\in N$},\\x &\text{otherwise.}\end{array}\right.$$
The class of $\tau_a$ in $\Gamma$ is independent of the choices involved above, and depends only on the isotopy class of $a$. It is a standard fact that $\Gamma=\Gamma(\Sigma)$ is generated by Dehn twists along simple closed curves in $\Sigma$ (see \cite[Chapter 4]{fm}).

\subsection{Character varieties}  \label{sect:2.2}
Throughout this paper, an \emph{algebraic variety} is a scheme of finite type over a field. Given an affine variety $V$ over a given field $k$, we denote by $k[V]$ its coordinate ring over $k$. If moreover $V$ is integral, then $k(V)$ denotes its function field over $k$. Given a commutative ring $A$ with unity, the elements of $A$ will be referred to as \emph{regular functions} on the affine scheme $\Spec A$.

\subsubsection{Character varieties of groups} \label{sect:2.2.1}

Let $\pi$ be a finitely generated group. Its ($\SL_2$) \emph{representation variety} $\Rep(\pi)$ is the affine scheme determined by the functor
$$A\mapsto\Hom(\pi,\SL_2(A))$$
for every commutative ring $A$. Given a sequence of generators of $\pi$ with $m$ elements, we have a presentation of $\Rep(\pi)$ as a closed subscheme of $\SL_2^m$ defined by equations coming from relations among the generators. For each $a\in\pi$, let $\tr_a$ be the regular function on $\Rep(\pi)$ given by $\rho\mapsto\tr\rho(a)$.

The ($\SL_2$) \emph{character variety} of $\pi$ over $\C$ is the affine invariant theoretic quotient
$$X(\pi)=\Rep(\pi)\git\SL_2=\Spec\C[\Rep(\pi)]^{\SL_2(\C)}$$
under the simultaneous conjugation action of $\SL_2$. Note that the regular function $\tr_{a}$ for each $a\in\pi$ descends to a regular function on $X(\pi)$. Moreover, $X(\pi)$ has a natural model over $\Z$, defined as the spectrum of
$$R(\pi)=\Z[\tr_a:a\in\pi]/(\tr_{1}-2,\tr_a\tr_b-\tr_{ab}-\tr_{ab^{-1}}).$$
The relations in the above presentation arise from the fact that the trace of the $2\times 2$ identity matrix is $2$, and $\tr(A)\tr(B)=\tr(AB)+\tr(AB^{-1})$ for every $A,B\in\SL_2(\C)$. Given an integral domain $A$ with fraction field $F$ of characteristic zero, the $A$-points of $X(\pi)$ parametrize the Jordan equivalence classes of $\SL_2(\overline F)$-representations of $\pi$ having character valued in $A$. We refer to \cite{horowitz}, \cite{ps}, \cite{saito} for details.

\begin{example}
\label{exfree}
We refer to Goldman \cite{goldman2} for details of examples below. Let $F_m$ denote the free group on $m\geq1$ generators $a_1,\dots,a_m$.
\begin{enumerate}
	\item[(1)] We have $\tr_{a_1}:X(F_1)\simeq\A^1$.
	\item[(2)] We have $(\tr_{a_1},\tr_{a_2},\tr_{a_1a_2}):X(F_2)\simeq\A^3$ by Fricke \cite[Section 2.2]{goldman2}.
	\item[(3)] The coordinate ring $\Q[X(F_3)]$ is the quotient of the polynomial ring
	$$\Q[\tr_{a_1},\tr_{a_2},\tr_{a_3},\tr_{a_1a_2},\tr_{a_2a_3},\tr_{a_1a_3},\tr_{a_1a_2a_3},\tr_{a_1a_3a_2}]$$
	by the ideal generated by two elements
$$
\tr_{a_1a_2a_3}+\tr_{a_1a_3a_2}-(\tr_{a_1a_2}\tr_{a_3}+\tr_{a_1a_3}\tr_{a_2}+\tr_{a_2a_3}\tr_{a_1}-\tr_{a_1}\tr_{a_2}\tr_{a_3})
$$
and
\begin{align*}
\tr_{a_1a_2a_3}\tr_{a_1a_3a_2}&-\{(\tr_{a_1}^2+\tr_{a_2}^2+\tr_{a_3}^2)+(\tr_{a_1a_2}^2+\tr_{a_2a_3}^2+\tr_{a_1a_3}^2)&\\
&\quad -(\tr_{a_1}\tr_{a_2}\tr_{a_1a_2}+\tr_{a_2}\tr_{a_3}\tr_{a_2a_3}+\tr_{a_1}\tr_{a_3}\tr_{a_1a_3})\\
&\quad +\tr_{a_1a_2}\tr_{a_2a_3}\tr_{a_1a_3}-4\}.
\end{align*}
\end{enumerate}
\end{example}

We record the following, which is attributed by Goldman \cite{goldman2} to Vogt \cite{vogt}.

\begin{lemma}
\label{rellem}
Given a finitely generated group $\pi$ and $a_1,a_2,a_3,a_4\in \pi$, we have
\begin{align*}
2{\tr_{a_1a_2a_3a_4}}&={\tr_{a_1}}{\tr_{a_2}}{\tr_{a_3}}{\tr_{a_4}}+{\tr_{a_1}}{\tr_{a_2a_3a_4}}+{\tr_{a_2}}{\tr_{a_3a_4a_1}}+{\tr_{a_3}}{\tr_{a_4a_1a_2}}\\
&\quad +{\tr_{a_4}}{\tr_{a_1a_2a_3}}+{\tr_{a_1a_2}}{\tr_{a_3a_4}}+{\tr_{a_4a_1}}{\tr_{a_2a_3}}-{\tr_{a_1a_3}}{\tr_{a_2a_4}}\\
&\quad -{\tr_{a_1}}{\tr_{a_2}}{\tr_{a_3a_4}}-{\tr_{a_3}}{\tr_{a_4}}{\tr_{a_1a_2}}-{\tr_{a_4}}{\tr_{a_1}}{\tr_{a_2a_3}}-{\tr_{a_2}}{\tr_{a_3}}{\tr_{a_4a_1}}.
\end{align*}
\end{lemma}

The above computation implies the following fact, which forms a special case of Procesi's theorem \cite{procesi} that ring of invariants of tuples of $N\times N$ matrices under simultaneous conjugation are (finitely) generated by the trace functions of products of matrices.

\begin{fact}
\label{fact}
If $\pi$ is a group generated by $a_1,\dots,a_m$, then $\Q[X(\pi)]$ is generated as a $\Q$-algebra by the collection $\{\tr_{a_{i_1}\cdots a_{i_k}}:1\leq i_1<\dots<i_k\leq m\}_{1\leq k\leq 3}$.
\end{fact}

\subsubsection{Moduli of local systems on manifolds} \label{sect:2.2.2}

Given a connected smooth (compact) manifold $M$, the moduli of local systems on $M$ we shall study is the character variety $X(M)=X(\pi_1 M)$ of its fundamental group. The complex points of $X(M)$ parametrize the Jordan equivalence classes of $\SL_2(\C)$-local systems on $M$. More generally, given a smooth manifold $M=M_1\sqcup\dots\sqcup M_m$ with finitely many connected components $M_i$, we define
$$X(M)=X(M_1)\times\dots\times X(M_m).$$
The construction of the moduli space $X(M)$ is functorial in the manifold $M$. Any smooth map $f:M\to N$ of manifolds induces a morphism $f^*:X(N)\to X(M)$, depending only on the homotopy class of $f$, given by pullback of local systems.

Let $\Sigma$ be a surface. For each curve $a\subset\Sigma$, there is a well-defined regular function $\tr_a:X(\Sigma)\to X(a)\simeq\A^1$, which agrees with $\tr_{\alpha}$ for any $\alpha\in\pi_1\Sigma$ freely homotopic to a parametrization of $a$. The boundary curves $\del\Sigma$ of $\Sigma$ induce a natural morphism
$$\tr_{\del\Sigma}=(-)|_{\del\Sigma}:X(\Sigma)\to X(\del\Sigma).$$
Now, writing $\Sigma=c_1\sqcup\dots\sqcup c_n$, we have an identification
$$X(\del\Sigma)=X(c_1)\times\dots\times X(c_n)\simeq\A^n$$
given by taking a local system on the disjoint union $\del\Sigma$ of $n$ circles to its sequence of traces along the curves. the morphism $\tr_{\del\Sigma}$ above may be viewed as an assignment to each $\rho\in X(\Sigma)$ its sequence of traces $\tr\rho(c_1),\dots,\tr\rho(c_n)$. The fibers of $\tr_{\del\Sigma}$
for $k\in\A^n$ will be denoted $X_k=X_k(\Sigma)$. Each $X_k$ is often referred to as a \emph{relative character variety} in the literature. If $\Sigma$ is a surface of type $(g,n)$ satisfying $3g+n-3>0$, the relative character variety $X_k(\Sigma)$ is an irreducible algebraic variety of dimension $6g+2n-6$.

We shall often simplify our notation by combining parentheses where applicable, e.g.~$X_k(\Sigma,\Z)=X_k(\Sigma)(\Z)$. Given a fixed surface $\Sigma$, a subset $K\subseteq X(\del\Sigma,\C)$, and a subset $A\subseteq\C$, we shall denote by
$$X_K(A)=X_K(\Sigma,A)$$
the set of all $\rho\in X(\Sigma,\C)$ such that $\tr_{\del\Sigma}(\rho)\in K$ and $\tr_a(\rho)\in A$ for every essential curve $a\subset\Sigma$. The following lemma shows that there is no risk of ambiguity with this notation.

\begin{lemma}
\label{pointlem}
If $A$ is a subring of $\C$ and $k\in A^n$, then $X_k$ has a model over $A$ and $X_k(A)$ recovers the set of $A$-valued points of $X_k$ in the sense of algebraic geometry.
\end{lemma}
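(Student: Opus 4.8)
The plan is to build an explicit $A$-model of $X_k$ out of the integral model $X(\pi_1\Sigma)=\Spec R(\pi_1\Sigma)$ together with the boundary trace morphism, and then to check, point by point, that its scheme-theoretic $A$-points are exactly the subset $X_k(A)\subseteq X(\C)$ defined above. Fix once and for all a sequence of optimal generators $(a_1,\dots,a_m)=(\alpha_1,\beta_1,\dots,\alpha_g,\beta_g,\gamma_1,\dots,\gamma_n)$ for $\pi_1\Sigma$, so that $a_{2g+i}=\gamma_i$ is a loop around the boundary curve $c_i$. Define the $A$-algebra
$$R_{k,A}=\bigl(R(\pi_1\Sigma)\otimes_{\Z}A\bigr)\big/\bigl(\tr_{c_1}-k_1,\dots,\tr_{c_n}-k_n\bigr),\qquad X_{k,A}=\Spec R_{k,A}.$$
Since by definition $X_k$ is the fiber over $k$ of $(\tr_{c_1},\dots,\tr_{c_n})\colon X(\pi_1\Sigma)\to\A^n$, base change along $A\hookrightarrow\C$ gives $X_{k,A}\times_{\Spec A}\Spec\C\cong X_k$, so $X_{k,A}$ is a model of $X_k$ over $A$; it is of finite type over $A$ by Fact \ref{fact}. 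Its $A$-points are $X_{k,A}(A)=\{\phi\in\Hom_{\Z\text{-alg}}(R(\pi_1\Sigma),A):\phi(\tr_{c_i})=k_i\text{ for all }i\}$.

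The inclusion $X_{k,A}(A)\subseteq X_k(A)$ is immediate: given $\phi\in X_{k,A}(A)$, composing with $A\hookrightarrow\C$ yields a $\C$-point $\rho$ of $X(\pi_1\Sigma)$, i.e.\ a conjugacy class of semisimple representations, with $\tr_b\rho=\phi(\tr_b)\in A$ for every $b\in\pi_1\Sigma$ and with $(\tr_{c_1}\rho,\dots,\tr_{c_n}\rho)=k$; in particular $\rho\in X_k(A)$. Because $A\hookrightarrow\C$ is injective, $\phi\mapsto\rho$ is injective.

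For the reverse inclusion, the key point, take $\rho\in X_k(A)$, corresponding to a $\Z$-algebra homomorphism $\rho^*\colon R(\pi_1\Sigma)\to\C$; we must show $\rho^*$ has image in $A$ (it automatically sends $\tr_{c_i}$ to $k_i$, since $(\tr_{c_i}\rho)=k\in A^n$). As the trace identities $\tr_a\tr_b=\tr_{ab}+\tr_{ab^{-1}}$ have integer coefficients, the reduction proving Fact \ref{fact} works integrally, so $R(\pi_1\Sigma)$ is generated as a $\Z$-algebra by the finitely many elements $\tr_{a_{i_1}\cdots a_{i_k}}$ with $i_1<\cdots<i_k$ in $\{1,\dots,m\}$ and $1\le k\le 3$. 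For each such multi-index the product $a_{i_1}\cdots a_{i_k}$ is a product of distinct generators preserving the cyclic ordering, hence by property (3) of optimal generators is represented by a simple loop, so its conjugacy class contains a simple closed curve $c$ on $\Sigma$. Every simple closed curve on $\Sigma$ is contractible, isotopic to a boundary curve $c_i$, or essential; in the first case $\tr_c\rho=2\in\Z\subseteq A$, in the second $\tr_c\rho=k_i\in A$, and in the third $\tr_c\rho\in A$ by the defining property of $X_k(A)$. Thus $\rho^*$ maps a generating set of $R(\pi_1\Sigma)$ into $A$, so $\rho^*(R(\pi_1\Sigma))\subseteq A$, and $\rho^*$ factors through an $A$-point $\phi$ of $X_{k,A}$ whose image under the map of the previous paragraph is $\rho$. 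This gives $X_k(A)\subseteq X_{k,A}(A)$, completing the identification. The main obstacle is the step just described: realizing every Fricke-type algebra generator as the trace of a simple closed curve, which is precisely what optimality of the generators provides, combined with the integral form of Fact \ref{fact} and the trichotomy for simple closed curves on a surface.
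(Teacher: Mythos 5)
Your proof is correct and follows the paper's broad outline: same $A$-model obtained from $R(\pi_1\Sigma)\otimes_\Z A$ with boundary-trace conditions imposed, and the same two inclusions to verify. The substantive step is the converse inclusion. The paper dispatches it in one sentence, asserting via the identity $\tr_a\tr_b=\tr_{ab}+\tr_{ab^{-1}}$ that $\tr_b$ for every $b\in\pi_1\Sigma$ is a $\Z$-\emph{linear combination} of the functions $\tr_a$ for $a\in E(\Sigma)$. Taken literally that claim is too strong: at the quaternion point of the one-holed torus, where $(\tr_\alpha,\tr_\beta,\tr_{\alpha\beta})=(0,0,0)$, every trace of an essential curve vanishes (by the Farey recursion), yet $\tr_{\alpha^2}=\tr_\alpha^2-2=-2$, so $\tr_{\alpha^2}$ is not in the $\Z$-span of essential traces. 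What is true — and all that the lemma needs — is that $\tr_b$ is a $\Z$-\emph{polynomial} in traces of simple closed curves, with constants in $\Z\subseteq A$ and boundary traces in $A$ also allowed. Your write-up supplies exactly this missing precision: the integral form of Fact~\ref{fact} gives a finite set of $\Z$-algebra generators $\tr_{a_{i_1}\cdots a_{i_k}}$, property~(3) of optimal generators realizes each such generator as the trace of a simple loop, and the contractible / boundary-parallel / essential trichotomy then lands every generator's value in $A$; since $A$ is a subring, the ring homomorphism $\rho^*$ therefore maps all of $R(\pi_1\Sigma)$ into $A$. So your argument is not just correct but is the honest version of the step the paper glosses; it buys a verifiable reduction to a finite, explicit generating set at the small cost of invoking Fact~\ref{fact} and the optimal-generator machinery.
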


\begin{proof}
Let $A$ and $k\in A^n$ be as above. We have a model of $X_k$ over $A$ with coordinate ring $\Spec R(\pi_1\Sigma)\otimes_\Z A$. It is clear that an $A$-valued point in the sense of algebraic geometry corresponds to a point in $X_k(A)$. The converse follows from the observation, using the identity $\tr_a\tr_b=\tr_{ab}+\tr_{ab^{-1}}$, that $\tr_b$ for every $b\in\pi_1\Sigma$ can be written as a $\Z$-linear combination of products of traces $\tr_a$ for nondegenerate curves $a\subset\Sigma$.
\end{proof}

Given an immersion $\Sigma'\to\Sigma$ of surfaces, we have the associated restriction
$$(-)|_{\Sigma'}:X(\Sigma)\to X(\Sigma').$$
The mapping class group $\Gamma(\Sigma)$ acts naturally on $X(\Sigma)$ by pullback of local systems, preserving the integral structure as well as each relative character variety $X_k(\Sigma)$ and the sets $X_K(\Sigma,A)$ defined above. As mentioned in Section \ref{sect:1}, this paper is largely concerned with the descent properties of the dynamics on the complex points of $X(\Sigma)$ beyond the classical setting, e.g.~of Teichm\"uller spaces.

\subsubsection{Reconstruction and lifts of Dehn twists} \label{sect:2.2.3}
Let $\Sigma$ be a surface of type $(g,n)$ with $3g+n-3>0$, and let $a\subset\Sigma$ be an essential curve. Let $x\in\Sigma$ be a base point lying on $a$, and let $\alpha$ be a simple based loop parametrizing $a$. We shall describe the reconstruction of representations $\rho:\pi_1(\Sigma,x)\to\SL_2(\C)$ from representations on connected components of $\Sigma|a$, as well as associated lifts of Dehn twists. Our main reference is Goldman-Xia \cite{gx}. There are two cases to consider, according to whether $a$ is separating or nonseparating.

{\bf Nonseparating curves.}
Suppose that $a$ is nonseparating, so $\Sigma|a$ is connected. Let $a_1$ and $a_2$ be the boundary curves of $\Sigma|a$ corresponding to $a$, and let $(x_i,\alpha_i)$ be the lifts of $(x,\alpha)$ to each $a_i$. We shall assume that we have chosen the numberings so that the interior of $\Sigma|a$ lies to the left as one travels along $\alpha_1$. Let $\beta$ be a simple loop on $\Sigma$ based at $x$, intersecting the curve $a$ once transversely at the base point, such that $\beta$ lifts to a path $\beta'$ in $\Sigma|a$ from $x_2$ to $x_1$. Let us denote by $\alpha_2'$ the loop based at $x_1$ given by the path $\alpha_2'=(\beta')^{-1}\alpha_2\beta'$,
where $(\beta')^{-1}$ refers to the path $\beta'$ traversed in the opposite direction. The immersion $\Sigma|a\to\Sigma$ induces an embedding $\pi_1(\Sigma|a,x_1)\to\pi_1(\Sigma,x)$, giving us the isomorphism
$$\pi_1(\Sigma,x)=(\pi_1(\Sigma|a,x_1)\vee\langle\beta\rangle)/(\alpha_2'=\beta^{-1}\alpha_1\beta).$$
Thus, any representation $\rho:\pi_1(\Sigma,x)\to\SL_2(\C)$ is determined uniquely by a pair $(\rho',B)$, where $\rho':\pi_1(\Sigma|a,x_1)\to\SL_2(\C)$ is a representation and $B\in\SL_2(\C)$ is an element such that $\rho'(\alpha_2')=B^{-1}\rho'(\alpha_1)B$, with the correspondence
$$\rho\mapsto(\rho',B)=(\rho|_{\pi_1(\Sigma|a,x_1)},\rho(\beta)).$$
We define an automorphism $\tau_\alpha$ of $\Hom(\pi_1(\Sigma,x),\SL_2)$ as follows. Given $\rho=(\rho',B)$, we set $\tau_\alpha(\rho',B)=(\rho',B')$ where $B'=\rho(\alpha)B$. This descends to the action $\tau_a$ of the left Dehn twist action along $a$ on the moduli space $X(\Sigma)$.

{\bf Separating curves.}
Suppose that $a$ is separating, so we have $\Sigma|a=\Sigma_1\sqcup\Sigma_2$ with each $\Sigma_i$ of type $(g_i,n_i)$ satisfying $2g_i+n_i-2>0$. Let $a_i$ be the boundary curve of $\Sigma_i$ corresponding to $a$. Let $(x_i,\alpha_i)$ be the lift of $(x,\alpha)$ to $a_i$. We shall assume that we have chosen the numberings so that the interior of $\Sigma_1$ lies to the left as one travels along $\alpha_1$. The immersions $\Sigma_i\hookrightarrow\Sigma$ of the surfaces induce embeddings
$\pi_1(\Sigma_i,x_i)\to\pi_1(\Sigma,x)$
of fundamental groups, and we have an isomorphism
$$\pi_1(\Sigma,x)\simeq(\pi_1(\Sigma_1,x_1)\vee\pi_1(\Sigma_2,x_2))/(\alpha_1=\alpha_2).$$
Thus, any representation
$\rho:\pi_1(\Sigma,x)\to\SL_2(\C)$
is determined uniquely by a pair $(\rho_1,\rho_2)$ of representations $\rho_i:\pi_1(\Sigma_i,x_i)\to\SL_2(\C)$ such that $\rho_1(\alpha_1)=\rho_2(\alpha_2)$, with the correspondence
$$\rho\mapsto(\rho_1,\rho_2)=(\rho|_{\pi_1(\Sigma_1,x_1)},\rho|_{\pi_1(\Sigma_2,x_2)}).$$
We define an automorphism $\tau_{\alpha}$ of $\Hom(\pi_1(\Sigma,x),\SL_2)$ as follows. For a representation $\rho=(\rho_1,\rho_2)$, we set $\tau_\alpha(\rho_1,\rho_2)=(\rho_1,\rho_2')$
where $$\rho_2'(\gamma)=\rho(\alpha)\rho_2(\gamma)\rho(\alpha)^{-1}$$
for every $\gamma\in\pi_1(\Sigma_2,x_2)$. This descends to the action $\tau_a$ of the left Dehn twist along $a$ on the moduli space $X(\Sigma)$.

\subsection{Markoff type cubic surfaces}  \label{sect:2.3}
Here, we give a description of the moduli spaces $X_k(\Sigma)$ and their mapping class group dynamics for $(g,n)=(1,1)$ and $(0,4)$. These cases are distinguished by the fact that each $X_k$ is an affine cubic algebraic surface with an explicit equation.

\subsubsection{Case $(g,n)=(1,1)$} \label{sect:2.3.1}
Let $\Sigma$ be a surface of type $(g,n)=(1,1)$, i.e.~a one holed torus. Let $(\alpha,\beta,\gamma)$ be an optimal sequence of generators for $\pi_1\Sigma$ (as in Definition \ref{optimal}). By Example \ref{exfree}.(2), we have an identification
$(\tr_\alpha,\tr_\beta,\tr_{\alpha\beta}):X(\Sigma)\simeq\A^3$.
From the trace relations in Section \ref{sect:2.2}, we obtain
\begin{align*}
\tr_\gamma&=\tr_{\alpha\beta\alpha^{-1}\beta^{-1}}=\tr_{\alpha\beta\alpha^{-1}}\tr_{\beta^{-1}}-\tr_{\alpha\beta\alpha^{-1}\beta}\\
&=\tr_{\beta}^2-\tr_{\alpha\beta}\tr_{\alpha^{-1}\beta}+\tr_{\alpha\alpha}=\tr_{\beta}^2-\tr_{\alpha\beta}(\tr_{\alpha^{-1}}\tr_{\beta}-\tr_{\alpha\beta})+\tr_{\alpha}^2-\tr_{1}\\
&=\tr_{\alpha}^2+\tr_{\beta}^2+\tr_{\alpha\beta}^2-\tr_{\alpha}\tr_{\beta}\tr_{\alpha\beta}-2.
\end{align*}
Writing $(x,y,z)=(\tr_{\alpha},\tr_{\beta},\tr_{\alpha\beta})$ so that each of the variables $x$, $y$, and $z$ corresponds to an essential curve on $\Sigma$ as depicted in Figure \ref{fig2}, the moduli space $X_k\subset X$ has an explicit presentation as an affine cubic algebraic surface in $\A_{x,y,z}^3$ with equation
$$x^2+y^2+z^2-xyz-2=k.$$
The mapping class group $\Gamma=\Gamma(\Sigma)$ acts on $X_k$ via polynomial transformations. For convenience, let $a$, $b$, and $ab$ denote the essential curves lying in the free homotopy classes of loops $\alpha$, $\beta$, and $\alpha\beta$, respectively. We have the following descriptions of the associated Dehn twist actions.

\begin{lemma}
\label{dehnt}
The Dehn twist actions $\tau_a$, $\tau_b$, and $\tau_{ab}$ on $X(\Sigma)$ are given by
\begin{align*}
\tau_a^*&:(x,y,z)\mapsto(x,z,xz-y),\\
\tau_b^*&:(x,y,z)\mapsto(xy-z,y,x),\\
\tau_{ab}^*&:(x,y,z)\mapsto(y,yz-x,z)
\end{align*}
in terms of the above coordinates.
\end{lemma}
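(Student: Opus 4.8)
The plan is to compute each Dehn twist action directly on the level of the fundamental group, using the standard fact that a Dehn twist $\tau_c$ along a simple closed curve $c$ acts on $\pi_1\Sigma$ by the following rule on a loop $\delta$ crossing $c$ transversally: each crossing of $\delta$ with $c$ is replaced by an insertion of $\gamma^{\pm1}$, where $\gamma$ is the loop around $c$ and the sign depends on the orientation of the intersection. For a surface of type $(1,1)$ with optimal generators $(\alpha,\beta,\gamma)$ satisfying $\gamma=[\alpha,\beta]^{\pm1}$ (up to conventions), the curves $a$, $b$, $ab$ are represented by $\alpha$, $\beta$, $\alpha\beta$, and each pair among them has geometric intersection number one. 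Since $\tau_a$ fixes the isotopy class of $a$ and of the boundary, it fixes $\tr_\alpha$ (hence $x$) and $\tr_\gamma$ (hence the level $k$); similarly for $\tau_b$ and $\tau_{ab}$. So the bulk of the work is to identify the image of the remaining two coordinate curves under each twist, up to free homotopy, and then read off the trace.

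Concretely, I would first fix the intersection pattern: $\tau_a$ acts on $\pi_1\Sigma$ by $\alpha\mapsto\alpha$, $\beta\mapsto\beta\alpha$ (or $\alpha^{-1}\beta$, depending on the handedness chosen for the twist and the orientation of the intersection point at $x$), and then one checks which choice is consistent with preserving the commutator $[\alpha,\beta]$ up to conjugacy, i.e.\ with fixing $\tr_\gamma$. Having pinned down $\tau_a^*\beta$, one computes $\tr(\tau_a^*\beta)$ and $\tr(\tau_a^*(\alpha\beta))$ using the fundamental trace identity $\tr_u\tr_v=\tr_{uv}+\tr_{uv^{-1}}$ from Section 2.2. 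For instance, with $\tau_a^*:\beta\mapsto\alpha\beta$ one gets $y\mapsto\tr_{\alpha\beta}=z$ and $z=\tr_{\alpha\beta}\mapsto\tr_{\alpha\alpha\beta}=\tr_\alpha\tr_{\alpha\beta}-\tr_\beta=xz-y$, which matches the claimed formula $(x,y,z)\mapsto(x,z,xz-y)$. The analogous short computations handle $\tau_b$ and $\tau_{ab}$ — and in fact, once $\tau_a$ is established, $\tau_b$ and $\tau_{ab}$ follow by the evident symmetry of the cubic $x^2+y^2+z^2-xyz-2=k$ under permuting the roles of the three curves, together with the observation (from Figure \ref{fig2} and the optimal-generator structure) that $\tau_b$ fixes $y$ and $\tau_{ab}$ fixes $z$. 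A consistency check at the end: each listed map visibly preserves $x^2+y^2+z^2-xyz$, confirming it lands in the correct $X_k$, and each is an involution-free polynomial automorphism of $\A^3$ of the expected "Vieta" shape.

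The main obstacle is bookkeeping of signs and orientation conventions: the direction of the Dehn twist (left vs.\ right), the choice of $\beta$ versus $\beta'$ (recall the excerpt replaces $\beta_i'$ by its reverse $\beta_i$), and the sign of the intersection number at the base point all affect whether one gets $\alpha\beta$ or $\alpha\beta^{-1}$ or $\beta\alpha$ in the intermediate step, and a wrong choice produces $xz+y$ instead of $xz-y$, etc. The clean way to resolve this is not to track the geometry too finely but to use two anchors: (i) $\tau_a^*$ must fix $x$ and must fix $\tr_\gamma$, and (ii) $\tau_a^*$ must be a polynomial automorphism (so the map on $\beta$ must be a single group-element substitution, not something more complicated). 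These two constraints, combined with the identity $\tr_{u}\tr_v=\tr_{uv}+\tr_{uv^{-1}}$, essentially force the formula up to the harmless symmetry $z\leftrightarrow$ (the other root of the quadratic in $z$ fixing the cubic), and one then selects the branch that corresponds to the positive Dehn twist. I would present the $\tau_a$ computation in full and obtain $\tau_b,\tau_{ab}$ from it by symmetry, remarking that the formulas are exactly the three Vieta involutions composed with coordinate permutations, which is the classical Markoff picture.
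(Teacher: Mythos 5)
Your proposal is correct and follows essentially the same route as the paper: determine the image of the coordinate curves $a,b,ab$ under the Dehn twist at the level of free homotopy classes (the paper records $\tau_a(b)\sim\alpha\beta$ and $\tau_a(ab)\sim\alpha\alpha\beta$), then convert to coordinates via the trace identity $\tr_\alpha\tr_{\alpha\beta}=\tr_{\alpha\alpha\beta}+\tr_\beta$, yielding $(x,y,z)\mapsto(x,z,xz-y)$. Your extra remarks on pinning down orientation conventions via the invariance of $x$ and $\tr_\gamma$, and on obtaining $\tau_b,\tau_{ab}$ by symmetry, are reasonable sanity checks but not a departure from the paper's method, which simply does the analogous direct computation for the other two twists.
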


\begin{proof}
Note that $\tau_a(a)$ has the homotopy class of $\alpha$, $\tau_a(b)$ has the homotopy class of $\alpha\beta$, and $\tau_a(ab)$ has the homotopy class of $\alpha\alpha\beta$. Noting that $\tr_{\alpha\alpha\beta}=\tr_{\alpha}\tr_{\alpha\beta}-\tr_{\beta}$, we obtain the desired expression for $\tau_a^*$. The other Dehn twists are similar.
\end{proof}

Each of the morphisms $\tau_a^*$, $\tau_b^*$, and $\tau_{ab}^*$ is a composition of a Vieta involution, i.e.~morphism of the form
$(x,y,z)\mapsto(x,y,xy-z)$,
with a transposition of two coordinates.

\begin{figure}[ht]
    \includegraphics{./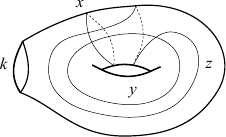}
    \caption{Curves on a surface of type $(1,1)$ with corresponding functions}
    \label{fig2}
\end{figure}

\subsubsection{Case $(g,n)=(0,4)$} \label{sect:2.3.2}
Let $\Sigma$ be a surface of type $(0,4)$, i.e.~a four holed sphere. Let $(\gamma_1,\dots,\gamma_4)$ be an optimal sequence of generators for $\pi_1\Sigma$ (as in Definition \ref{optimal}). Let us set
$$(x,y,z)=(\tr_{\gamma_1\gamma_2},\tr_{\gamma_2\gamma_3},\tr_{\gamma_1\gamma_3}),$$
so that each of the variables corresponds to an essential curve on $\Sigma$ as depicted in Figure \ref{fig3}. By Example \ref{exfree}.(3), for $k=(k_1,k_2,k_3,k_4)\in\A^4(
\C)$ the relative character variety $X_k=X_k(\Sigma)$ is an affine cubic algebraic surface in $\A_{x,y,z}^3$ given by the equation
$$x^2+y^2+z^2+xyz=ax+by+cz+d$$
with
$$\left\{\begin{array}{l}a=k_1k_2+k_3k_4\\b=k_1k_4+k_2k_3\\c=k_1k_3+k_2k_4\end{array}\right.\quad\text{and}\quad d=4-\sum_{i=1}^4k_i^2-\prod_{i=1}^4k_i.$$
The mapping class group $\Gamma=\Gamma(\Sigma)$ acts on $X_k$ via polynomial transformations. In particular, by an elementary argument as in Lemma \ref{dehnt}, we see that Dehn twists give rise to morphisms
\begin{align*}
&(x,y,z)\mapsto(x,b-xz-y,c-x(b-xz-y)-z),\\
&(x,y,z)\mapsto(a-y(c-xy-z)-x,y,c-xy-z),\\
&(x,y,z)\mapsto(a-yz-x,b-(a-yz-x)z-y,z).
\end{align*}
Note that each transformation is a composition of two of the three Vieta involutions defined on $X_k$:
\begin{align*}
&(x,y,z)\mapsto(a-yz-x,y,z),\\
&(x,y,z)\mapsto(x,b-xz-y,z),\\
&(x,y,z)\mapsto(x,y,c-xy-z).
\end{align*}
\begin{figure}[ht]
    \centering
    \includegraphics{./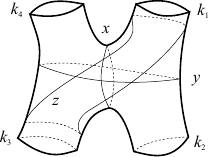}
    \caption{Curves on surfaces of type $(0,4)$ with corresponding functions}
    \label{fig3}
\end{figure}

\section{Systoles of local systems}  \label{sect:3}
In this section, we derive boundedness results for systoles of $\SL_2$-local systems on surfaces. In Section \ref{sect:3.1}, we briefly recall relevant notions and results in differential geometry, mainly to set up our notations and conventions. In Section \ref{sect:3.2}, we discuss existence results for harmonic maps needed in our anlaysis. In Section \ref{sect:3.3}, we prove the main results of this section, from which we derive Theorems \ref{theorem2} and \ref{theorem3} as corollaries in Section \ref{sect:3.3.1}.

\subsection{Differential geometry}  \label{sect:3.1}
\subsubsection{Curvature} \label{sect:3.1.1}
Let $M$ be a smooth manifold with Riemannian metric $\sigma=\langle-,-\rangle$. Let $\nabla$ be the associated Levi-Civita connection, and $R=-\nabla^2$ the Riemannian curvature tensor. For $p\in M$ and two-dimensional subspace $V\subseteq T_pM$, let $K(V)$ denote the sectional curvature of $V$. For any real $\epsilon>0$, we shall denote by $M(\epsilon)$ the smooth manifold $M$ with the scaled metric $\epsilon\sigma=\epsilon\langle-,-\rangle$.

When discussing a geometric construction in the presence of multiple manifolds, we shall use a subscript or superscript to indicate to which manifold it belongs. For instance, we shall write $K^M$ for the sectional curvature on $M$. Given smooth manifolds $M$ and $N$, let $\pi_M:M\times N\to M$ and $\pi_N:M\times N\to N$ be the projections. We record a basic lemma about sectional curvature on product manifolds.

\begin{lemma}
\label{scurv}
Let $M$ and $N$ be Riemannian manifolds of dimension at least $2$.
\begin{enumerate}
	\item[\textup{(1)}] If $M$ and $N$ have nonpositive curvature, then so does $M\times N$.
	\item[\textup{(2)}] For every $(p,q)\in M\times N$ and two-dimensional subspace $V\subseteq T_{(p,q)}(M\times N)$ such that $\dim_\R d\pi_N(V)=2$,	$$\lim_{\epsilon\to0}K^{M(\epsilon)\times N}(V)=K^N(d\pi_N(V)).$$
\end{enumerate}
\end{lemma}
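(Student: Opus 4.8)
The plan is to reduce both assertions to two standard facts: first, that the curvature tensor of a Riemannian product splits with no mixed terms; and second, that rescaling a metric by a positive constant leaves its Levi-Civita connection, hence its $(3,1)$-curvature tensor $R=-\nabla^2$, unchanged. Concretely, for tangent vectors $X=(X_M,X_N)$ and $Y=(Y_M,Y_N)$ at a point $(p,q)\in M\times N$, the first fact gives $R^{M\times N}(X,Y)X=(R^M(X_M,Y_M)X_M,\,R^N(X_N,Y_N)X_N)$; since the Christoffel symbols of $\epsilon\sigma_M$ agree with those of $\sigma_M$, the same identity holds for $M(\epsilon)$, and pairing against $Y$ using the product metric $\epsilon\sigma_M\oplus\sigma_N$ yields
$$\langle R^{M(\epsilon)\times N}(X,Y)X,Y\rangle=\epsilon\langle R^M(X_M,Y_M)X_M,Y_M\rangle_M+\langle R^N(X_N,Y_N)X_N,Y_N\rangle_N,$$
while the metric quantities satisfy $\langle X,Y\rangle_{M(\epsilon)\times N}=\epsilon\langle X_M,Y_M\rangle_M+\langle X_N,Y_N\rangle_N$ and similarly for $|X|^2$ and $|Y|^2$. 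First I would write these identities down carefully from the definitions.

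For part (1), take $\epsilon=1$ and substitute into the definition of sectional curvature. The denominator $|X|^2|Y|^2-\langle X,Y\rangle^2$ is strictly positive for any genuine $2$-plane by Cauchy--Schwarz. For the numerator, if $\{X_M,Y_M\}$ is linearly independent then $\langle R^M(X_M,Y_M)X_M,Y_M\rangle_M=K^M(X_M,Y_M)(|X_M|_M^2|Y_M|_M^2-\langle X_M,Y_M\rangle_M^2)\le0$ by the curvature hypothesis, and this term vanishes when $\{X_M,Y_M\}$ is dependent; the same applies to the $N$-summand. Hence the numerator is $\le0$, so $K^{M\times N}(V)\le0$.

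For part (2), fix a basis $X=(X_M,X_N)$, $Y=(Y_M,Y_N)$ of $V$. The hypothesis $\dim_\R d\pi_N(V)=2$ means precisely that $\{X_N,Y_N\}$ is linearly independent in $T_qN$, so by strict Cauchy--Schwarz the quantity $|X_N|_N^2|Y_N|_N^2-\langle X_N,Y_N\rangle_N^2$ is positive. Letting $\epsilon\to0$ in the identities above, the numerator of $K^{M(\epsilon)\times N}(V)$ tends to $\langle R^N(X_N,Y_N)X_N,Y_N\rangle_N$ and its denominator tends to the nonzero number $|X_N|_N^2|Y_N|_N^2-\langle X_N,Y_N\rangle_N^2$; in particular $K^{M(\epsilon)\times N}(V)$ is well-defined for all small $\epsilon>0$, and
$$\lim_{\epsilon\to0}K^{M(\epsilon)\times N}(V)=\frac{\langle R^N(X_N,Y_N)X_N,Y_N\rangle_N}{|X_N|_N^2|Y_N|_N^2-\langle X_N,Y_N\rangle_N^2}=K^N(d\pi_N(V)).$$
There is no real obstacle here beyond bookkeeping; the only place a hypothesis is genuinely used is the nonvanishing of the limiting denominator, which is exactly what $\dim_\R d\pi_N(V)=2$ guarantees.
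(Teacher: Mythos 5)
Your proof is correct and follows essentially the same route as the paper: compute that the Levi--Civita connection and curvature tensor of a Riemannian product split with no cross terms, write out the resulting formula for sectional curvature of a $2$-plane, and read off both claims. The paper simply asserts that the claims "follow immediately" from that formula, whereas you helpfully make explicit the two small facts it leaves implicit — that the $(3,1)$-curvature tensor of $M(\epsilon)$ coincides with that of $M$, and that the hypothesis $\dim_\R d\pi_N(V)=2$ is exactly what keeps the limiting denominator nonzero.
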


\begin{proof}
A vector field on $T(M\times N)\simeq\pi_M^*TM\oplus\pi_N^*TN$ can be written as $(X,Y)$ for some vector field $X$ on $M$ and $Y$ on $N$. With this identification, the Levi-Civita connection on $M\times N$ is given by
$$\nabla_{(X_1,Y_1)}^{M\times N}(X_2,Y_2)=(\nabla_{X_1}^MX_2,\nabla_{Y_1}^NY_2)$$
for vector fields $X_i$ on $M$ and $Y_i$ on $N$. Therefore, by definition we have
\begin{align*}
&R^{M\times N}((X_1,Y_1),(X_2,Y_2))(X_3,Y_3)\\
&=(-\nabla_{(X_1,Y_1)}\nabla_{(X_2,Y_2)}+\nabla_{(X_2,Y_2)}\nabla_{(X_1,Y_1)}+\nabla_{[(X_1,Y_1),(X_2,Y_2)]})(X_3,Y_3)\\
&=(R^M(X_1,X_2)X_3,R^M(Y_1,Y_2)Y_3).
\end{align*}
It follows that, given a point $(p,q)\in M\times N$ and the two-dimensional subspace $V\subseteq T_{(p,q)}(M\times N)$ generated by two vectors $(x_1,y_1)$ and $(x_2,y_2)$, the sectional curvature $K^{M\times N}(V)$ of $V$ is therefore given by
$$\frac{\langle R^M(x_1,x_2)x_1,x_2\rangle_M +\langle R^N(y_1,y_2)y_1,y_2\rangle_N}{(\langle x_1,x_1\rangle_M+\langle y_1,y_1\rangle_N)(\langle x_2,x_2\rangle_M+\langle y_2,y_2\rangle_N)-(\langle x_1,x_2\rangle_M+\langle y_1,y_2\rangle_N)^2}.$$
The two claims of the lemma follow immediately from this computation.
\end{proof}

\subsubsection{Hyperbolic three-space} \label{sect:3.1.3}
We will be using the following model of hyperbolic three-space $\H^3$. Let $\H=\R\oplus\R i\oplus \R j\oplus \R k$ be the algebra of Hamiltonian quaternions, with the usual embedding of $\C=\R\oplus\R i$ into $\H$. Let
$$\H^3=\{(x_1,x_2,x_3)=x_1+x_2i+x_3j+0k\in\H:x_3>0\}$$
together with its standard hyperbolic metric $ds^2=(dx_1^2+dx_2^2+dx_3^2)/x_3^2$.
The action of $\SL_2(\C)$ on $\H^3$ is given by
$$\begin{bmatrix}a & b\\ c &d\end{bmatrix}\cdot q=(aq+b)(cq+d)^{-1}\quad\text{for all }\begin{bmatrix}a & b\\ c &d\end{bmatrix}\in\SL_2(\C)\text{ and }q\in\H.$$
This action is transitive, and the stabilizer of $j\in\H^3$ is the special unitary group $\SU(2)$, giving us the identification $\H^3\simeq\SL_2(\C)/\SU(2)$. For an element $g\in\SL_2(\C)$, its \emph{translation length} is defined to be
$$\length(g)=\inf\{d(x,g\cdot x):x\in\H^3\}$$
where $d(\cdot,\cdot)$ denotes the hyperbolic distance on $\H^3$. Note that translation length descends to a function of the conjugacy classes in $\SL_2(\C)$. If an element $g\in\SL_2(\C)$ is elliptic or parabolic, i.e.~$\tr(g)\in[-2,2]$, then $\length(g)=0$. Otherwise, up to conjugacy we may assume that
$$g=\begin{bmatrix}\lambda &0\\0 &\lambda^{-1}\end{bmatrix}$$
for some unique $\lambda\in\C$ with $|\lambda|>1$, and writing $\lambda=\exp((\ell+i\theta)/2)$ with $\ell,\theta\in\R$, we have $\length(g)=\ell$.

\subsubsection{Bruhat-Tits trees} \label{sect:3.1.4}
Let $F$ be a field with a discrete valuation $v:F^\times \to\Z$. We shall denote the Bruhat-Tits tree associated to $\SL_2(F)$ by $\Lambda$ (see \cite{serre} for background on Bruhat-Tits trees). We will work with the realization of $\Lambda$ as a Riemannian simplicial complex, where each edge has the Euclidean metric of the unit interval $[0,1]$. We note that, unless the residue field of the valuation is finite, the tree $\Lambda$ is not locally finite. The group $\SL_2(F)$ acts naturally on $\Lambda$ as a group of isometries. Given an element $g\in\SL_2(F)$, we define its \emph{translation length} to be
$$\length(g)=\inf\{d(x,g\cdot x):x\in\Lambda\}\in\Z_{\geq0}.$$
Note that $\length(g)$ depends only on the conjugacy class of $g$, and that $\length(g)=0$ if and only if $\tr(g)\in\Ocal$.

\subsubsection{Systoles of Riemannian surfaces} \label{sect:3.1.5}
Given a surface $S$ of type $(g,n)$ satisfying $3g+n-3>0$ with an arbitrary Riemannian metric, we define its \emph{systole} $\sys(S)$ to be the infimal length of an essential curve on $S$. We begin by recording the following results in systolic geometry due to Gromov \cite[Proposition 5.1.B]{gromov} and Balacheff-Parlier-Sabourau \cite[Theorem 6.10]{bps}.

\begin{theorem}[Gromov]
\label{gromov}
For any closed surface $S$ of genus $g\geq1$ with an arbitrary Riemannian metric, we have $\sys(S)^2\leq 2\Vol(S)$.
\end{theorem}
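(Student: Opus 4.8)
The statement is purely Riemannian, and the only feature of $S$ that matters is that it is not simply connected (automatic for $g\geq1$). A soft, uniform‐in‐$g$ route is available via the filling radius: a closed surface of genus $\geq1$ is aspherical, hence essential in Gromov's sense, so $\sys(S)\leq 6\,\mathrm{FillRad}(S)$, while $\mathrm{FillRad}(S)=O(\Vol(S)^{1/2})$ by the filling inequality; this already gives $\sys(S)^2=O(\Vol(S))$, but with a constant far from $2$. To reach the clean constant $2$ I would argue directly in dimension two, treating $g=1$ and $g\geq2$ separately.

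For $g=1$ I would run Loewner's conformal argument. Uniformizing, write $S=\C/\Lambda$ with a metric $\mu\,|dz|^2$, $\mu>0$ smooth, and after choosing a reduced basis take $\Lambda=\Z+\Z\tau$ with $\Im\tau\geq\sqrt{3}/2$. For each $t$ the image $\ell_t$ of $\{\Im z=t\}$ is an essential loop of Euclidean length $1$, so $\sys(S)\leq\int_{\ell_t}\sqrt{\mu}\,|dz|$. Integrating over $t\in[0,\Im\tau]$ and applying Cauchy--Schwarz on the fundamental domain $D$ (of Euclidean area $\Im\tau$),
$$\sys(S)\cdot\Im\tau\;\leq\;\int_D\sqrt{\mu}\;\leq\;\Big(\int_D\mu\Big)^{1/2}(\Im\tau)^{1/2}\;=\;\Vol(S)^{1/2}\,(\Im\tau)^{1/2},$$
so $\sys(S)^2\leq\Vol(S)/\Im\tau\leq\tfrac{2}{\sqrt3}\Vol(S)<2\Vol(S)$. (The same bound, with the non‑sharp constant, would also follow for $g=1$ from the genus‑$\geq2$ argument below via the non‑separating case.)

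For $g\geq2$ the conformal trick is unavailable, and I would use a co‑area estimate around a systolic geodesic $\gamma$ — a shortest essential closed geodesic, of length $L=\sys(S)$ — organized by whether $\gamma$ separates. The first step is elementary: every subarc of $\gamma$ of length $\leq L/2$ is length‑minimizing, since a shortcut closed up with either complementary arc of $\gamma$ produces two loops whose free homotopy classes differ by $[\gamma]$, so at least one is essential and shorter than $L$, contradicting minimality. Fixing $p\in\gamma$ and $f=d(p,\cdot)$, one then studies the distance circles $f^{-1}(r)$ for $0<r<L/2$: each contains the two points of $\gamma$ at arclength $r$ from $p$, and one argues that the metric of $S$ below its ``waist'' forces these level sets — or, in the separating case, the analogous level sets in the two subsurfaces cut off by $\gamma$, each bounded by a copy of $\gamma$ — to carry enough length that $\Vol(B(p,L/2))=\int_0^{L/2}\mathcal{H}^1(f^{-1}(r))\,dr\geq L^2/2$, which is the claim. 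The \emph{main obstacle} is exactly this length estimate for the distance circles: they can disconnect, vary discontinuously in $r$, and shed null‑homotopic components, so one must isolate the component that genuinely wraps around $\gamma$ and account for the remainder precisely; a cruder version of this bookkeeping yields only $\sys^2\leq4\Vol$, and sharpening it to the constant $2$ (with the separating case handled by doubling the pieces along $\gamma$ and inducting on genus) is precisely the content of the cited work of Gromov.
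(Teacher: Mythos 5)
The paper does not prove this result; it is cited directly to Gromov's \emph{Filling Riemannian manifolds} (Proposition 5.1.B) and then used as a black box in deriving Corollary~\ref{grocor}. So there is no ``paper's own proof'' to compare against --- what you have written is a sketch of a proof of a quoted ingredient, and should be judged as such.

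Your genus-one argument is a correct and complete rendering of Loewner's torus inequality: the uniformization to $\C/(\Z+\Z\tau)$, the averaging of the lengths of the image circles $\{\Im z=t\}$ against Lebesgue measure on the fundamental domain, the Cauchy--Schwarz step, and the reduction bound $\Im\tau\ge\sqrt3/2$ are all exactly right, and they give $\sys(S)^2\le(2/\sqrt3)\Vol(S)$, which is even sharper than the stated constant~$2$. For $g\ge2$, however, your argument is an outline with an admitted gap. The preliminary step (that subarcs of a systolic geodesic $\gamma$ of length $\le L/2$ are minimizing) is correctly proved, and the co-area setup around $f=d(p,\cdot)$ is the right framework; but, as you say yourself, the elementary level-set bookkeeping only delivers $\sys^2\le4\Vol$, and the improvement to the constant~$2$ --- which needs genuine control of how $f^{-1}(r)$ disconnects and sheds contractible components, or a version of the argument organized around a tubular neighborhood of $\gamma$ rather than a metric ball about a point --- is precisely what you are deferring back to Gromov's paper. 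That is an honest deferral, and for the purposes of this paper (where Theorem~\ref{gromov} is a quoted input rather than a result) it is fine; just be aware that your $g\ge2$ portion is a summary of a proof, not a self-contained one.
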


\begin{theorem}[Balacheff-Parlier-Sabourau]
\label{bps}
There is a constant $C_g$ such that, for any complete Riemannian surface $S$ of genus $g$ with $n$ ends with volume normalized to $2\pi|\chi(S)|$, there is a pants decomposition on $S$ with the sum of the lengths of the curves at most
$$C_gn\log(n+1).$$
\end{theorem}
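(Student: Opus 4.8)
This is a theorem of Balacheff--Parlier--Sabourau, so I will only outline the strategy. By the Gauss--Bonnet-type normalization we have $\Vol(S)=2\pi|\chi(S)|=2\pi(2g-2+n)$, so for fixed $g$ the area is $\lesssim_g n$; the point is therefore to produce a pants decomposition of total length $\lesssim_g n\log(n+1)$, with the factor $\log(n+1)$ arising as the depth of a balanced binary recursion. The plan has two stages: first cut away the genus along $g$ short nonseparating curves to reduce to a planar (genus $0$) surface, then decompose that planar surface by iterated balanced cutting.

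Stage one (genus reduction). In a surface of positive genus, cut along the shortest essential nonseparating simple closed curve: this keeps the surface connected, lowers the genus by one, and raises the number of boundary circles by two. Doing this $g$ times yields a connected planar surface $S'$ with $N=n+2g$ ends and $\Vol(S')=\Vol(S)$, whose pants decompositions together with the $g$ cutting curves form a pants decomposition of $S$ (the curve counts match: $g+(N-3)=3g-3+n$). Each cutting curve has length bounded by the relevant systole of the intermediate surface, which by a systolic inequality for surfaces with boundary and positive genus (a variant of Theorem \ref{gromov}, obtainable by a doubling argument or directly as in \cite{bps}) is $\lesssim\sqrt{\Vol(S')}\lesssim_g\sqrt n$. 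So stage one contributes total length $\lesssim_g\sqrt n\le n\log(n+1)$, and it remains to treat $S'$.

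Stage two (cutting lemma and recursion). The heart of the matter is a cutting lemma: a planar Riemannian surface $P$ with $N\ge4$ ends and area $a$ admits an essential simple closed curve $\gamma$ separating $P$ into two planar pieces, each carrying between $N/4$ and $3N/4$ of the ends of $P$ and --- this can be arranged simultaneously --- between $a/4$ and $3a/4$ of the area, with
$$\length(\gamma)\;\lesssim\;\sqrt{a\,N}\;+\;N.$$
One constructs $\gamma$ by choosing a balanced partition of the ends into clusters $I_1\sqcup I_2$, picking a short loop around each end, joining the loops of $I_1$ by a tree of length $\lesssim\sqrt{a\,|I_1|}$ via a net/space-filling estimate, taking $\gamma$ to be the boundary of a regular neighbourhood of this tree together with the enclosed ends, and sweeping it across $P$ to also balance the area (a further cost $\lesssim\sqrt a$). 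Now iterate. Since both the end-count and the area subdivide in bounded proportion, at recursion depth $d$ there are $\lesssim 2^{d}$ pieces, each with $\asymp N/2^{d}$ ends and area $\lesssim a/2^{d}$, hence (using $a\lesssim N$) each is cut by a curve of length $\lesssim\sqrt{(a/2^{d})(N/2^{d})}+N/2^{d}\lesssim N/2^{d}$. The total length contributed at depth $d$ is thus $\lesssim 2^{d}\cdot N/2^{d}=N$, and since the number of ends per piece decays geometrically there are $\lesssim\log N$ depths, giving $\lesssim N\log N\lesssim_g n\log(n+1)$. The recursion terminates at three-holed spheres (needing no interior curve) and four-holed spheres, each of the $\lesssim N$ of the latter being closed off by a single curve of length $\lesssim\sqrt{a_i}+(\text{its boundary length})$ by a diastole bound; their contribution totals $\sum_i\sqrt{a_i}\lesssim\sqrt{N\cdot a}\lesssim N$. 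Combining the three contributions yields the bound $C_g\,n\log(n+1)$.

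The main obstacle is the cutting lemma, and within it the tree-length estimate $\length(T)\lesssim\sqrt{a\,|I_1|}$ on a general Riemannian surface: a crude covering of $P$ by metric balls can fail badly for metrics with long thin necks, so one must first excise such necks (this is cheap, since a thin-neck end contributes a short separating curve that can be removed in advance), or argue throughout with homotopy area in place of area as in \cite{bps}. The secondary technical points are: the positive-genus systolic inequality for surfaces with boundary used in stage one; a uniform upper bound for the length of a loop around a single end, again via a coarea argument, the ``fat collar'' case being controlled by the fact that a fat collar consumes a proportionate amount of area; and the simultaneous balancing of end-count and area in the cutting lemma, together with the verification that the curves produced at different stages of the recursion may be taken pairwise disjoint, so that they genuinely assemble into a pants decomposition.
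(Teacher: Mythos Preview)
The paper does not prove this statement: Theorem~\ref{bps} is quoted verbatim as \cite[Theorem 6.10]{bps} and used as a black box in the proof of Corollary~\ref{grocor}. There is therefore no ``paper's own proof'' to compare against; your outline goes well beyond what the paper attempts.

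As a sketch of the actual Balacheff--Parlier--Sabourau argument, your two-stage plan (genus reduction, then balanced recursive cutting of a planar surface with a $\log$ factor from recursion depth) captures the correct high-level shape and the right arithmetic for why $n\log(n+1)$ emerges. That said, the cutting lemma you state --- an essential separating curve of length $\lesssim\sqrt{aN}+N$ that simultaneously balances ends and area --- is precisely where the real work lies, and your justification (build a tree joining short end-loops, take a neighbourhood boundary, then sweep to balance area) is too optimistic as written. On a general Riemannian surface the ``net/space-filling'' estimate $\length(T)\lesssim\sqrt{a|I_1|}$ has no reason to hold: thin necks can make pairwise distances between ends enormous relative to area, and your proposed fix of ``excising thin necks in advance'' is circular, since detecting and bounding those necks is itself a short-loop problem of the same flavour. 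The actual argument in \cite{bps} proceeds instead through a diastolic/sweepout inequality (min-max over $1$-cycles) to produce the short separating curves, rather than through a spanning-tree construction; this is what makes the estimate robust against bad metrics. You correctly flag this as ``the main obstacle,'' but the resolution you gesture at is not the one that works.
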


Using these results, we deduce the following.

\begin{corollary}
\label{grocor}
There is a constant $C_{g,n}$ such that, for any Riemannian surface $S$ of type $(g,n)$ with $3g+n-3>0$, we have
$$\sys(S)^2\leq C_{g,n}(\Vol(S)+\ell_1^2+\cdots+\ell_n^2)$$
where $\ell_1,\cdots,\ell_n$ are the lengths of the boundary curves $c_1,\dots,c_n$ of $S$, respectively.
\end{corollary}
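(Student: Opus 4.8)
The plan is to cap off the boundary of $S$ to obtain a closed (or complete finite-volume) surface, apply Theorem \ref{gromov} or Theorem \ref{bps} there, and then push the resulting short essential curve back into $S$.

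First, if $n=0$ then $3g-3>0$ forces $g\geq 2$, and Theorem \ref{gromov} applied to $S$ gives $\sys(S)^2\leq 2\Vol(S)$, which is stronger than required; so I may assume $n\geq 1$. To each boundary curve $c_i$ I glue a closed round hemisphere of a sphere of radius $\ell_i/2\pi$: its boundary great circle has length $\ell_i$ and its area is $\ell_i^2/2\pi$, so gluing produces a closed surface $\hat S$ of genus $g$ with a metric that is smooth off $\bigcup_i c_i$ and Lipschitz across it, satisfying $\Vol(\hat S)=\Vol(S)+\tfrac1{2\pi}\sum_i\ell_i^2$. (The metric can be smoothed near $\bigcup_i c_i$ with arbitrarily small effect on lengths and volume; I suppress this.) Deleting an interior point from each cap and grafting in a complete finite-volume end of arbitrarily small volume turns $\hat S$ into a complete finite-volume surface $\hat S^\circ$ of genus $g$ with $n$ cusps, homeomorphic to $\mathrm{int}\,S$ and with $\chi(\hat S^\circ)=2-2g-n<0$.

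Theorem \ref{bps}, applied to $\hat S^\circ$ (rescaled to volume $2\pi|\chi|$ and back), gives a pants decomposition of total length at most $C_g\,n\log(n+1)\,(\Vol(\hat S^\circ)/2\pi|\chi|)^{1/2}$; it consists of $3g+n-3\geq 1$ curves, all essential, since a curve in a pants decomposition is neither nullhomotopic nor peripheral. Let $\gamma$ be the geodesic representative of such a curve, so $\ell(\gamma)$ is bounded as above, $\gamma$ is noncontractible in $\hat S^\circ\cong\mathrm{int}\,S$, and---for the grafted ends small enough---$\gamma$ lies in $S\cup(\text{caps})$ and avoids the poles. The key point is that each arc of $\gamma$ inside a cap is an arc of a great circle of the round sphere with both endpoints on the bounding great circle $c_i$; since two non-antipodal points of a sphere lie on a unique great circle, such an arc either runs along $c_i$, or joins two antipodal points of $c_i$---and, being then a semicircle of a great circle of radius $\ell_i/2\pi$, has length exactly $\ell_i/2$. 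Replacing each arc of the latter kind by one of the two half-circles of $c_i$ through its endpoints (also of length $\ell_i/2$) produces a closed curve $\gamma'\subset S$, freely homotopic to $\gamma$ in $\mathrm{int}\,S$, with $\ell(\gamma')\leq\ell(\gamma)$.

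Finally, $\gamma'$ is noncontractible in $S$ (being freely homotopic to the noncontractible $\gamma$) and is not isotopic in $S$ to any $c_i$ (such curves become nullhomotopic after capping), so $\gamma'$ is essential in $S$ and
\[
\sys(S)^2\;\leq\;\ell(\gamma')^2\;\leq\;\ell(\gamma)^2\;\leq\;\frac{\bigl(C_g\,n\log(n+1)\bigr)^2}{2\pi|\chi|}\,\Vol(\hat S^\circ)\;\leq\;C_{g,n}\Bigl(\Vol(S)+\textstyle\sum_i\ell_i^2\Bigr),
\]
the last inequality after letting the grafted volume tend to $0$. The step I expect to be the main obstacle is controlling the length of $\gamma'$ in terms of that of $\gamma$: this is exactly why I take the caps to be round hemispheres, so that a geodesic crossing a cap is forced to run between antipodal boundary points at a cost of precisely $\ell_i/2$, which the replacing boundary detour matches exactly. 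A secondary technical point is the reduction to a surface to which Theorem \ref{bps} literally applies (grafting complete ends of negligible volume, or checking that $S$ itself is admissible); and when $g\geq 1$ one can instead apply Theorem \ref{gromov} to the closed surface $\hat S$ directly, skipping the grafting, with the rest of the argument unchanged.
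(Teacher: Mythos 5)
Your strategy is the same as the paper's: cap off the boundary components of $S$, apply Theorem~\ref{gromov} or Theorem~\ref{bps} to the resulting surface, and push the short essential curve back into $S$. The paper uses a different cap, namely a long flat cylinder $c_i\times[0,2\ell_i]$ with a spherical cap attached, chosen precisely so that any path through the interior of the cap between two boundary points is \emph{strictly} longer than a boundary arc between them. Consequently the shortest noncontractible curve on the capped surface avoids the caps outright, the short curve is already essential in $S$, and no arc-by-arc replacement is needed. Your round hemisphere satisfies the same "no shortcut" property only marginally (equality at antipodal boundary points), which is why you are forced into the modification step. That modification is correct in substance, but leaves two points unaddressed that the paper's choice of cap sidesteps: (i) the modified curve $\gamma'$ need not be simple (arc replacements can create self-intersections), and "essential curve" means \emph{simple}, so you should extract a simple noncontractible subloop of $\gamma'$ of no greater length before concluding, and also push $\gamma'$ slightly off $\partial S$; and (ii) in the $g=0$ case, the assertion that for small enough grafted ends the geodesic pants curve $\gamma$ avoids the grafted region (and hence that its arcs in the caps are genuine great-circle arcs of the hemisphere) is stated but not justified, whereas the paper's long cylinders/cusps make this automatic by the same concavity condition. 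So: same route, but a cap with a weaker convexity property, at the cost of a longer argument and a couple of genuine gaps; the paper's choice of a long collar before the cap is what makes the proof go through without the replacement step.
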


\begin{proof}
Assume first that $g\geq1$. For each $i=1,\dots,n$, let us attach a Riemannian disk $D_i$ to the boundary curve $c_i$ on $S$, satisfying the following two conditions:
\begin{enumerate}
	\item[\textup{(1)}] between any two points on the boundary of $D_i$, the shortest path between them lies on the boundary; and
	\item[\textup{(2)}] the area of $D_i$ is bounded by a fixed positive constant times the square of the perimeter.
\end{enumerate}
For instance, the disk $D_i$ can be taken to be the union of the cylinder $c_i\times[0,2\ell_i]$ with a spherical cap on one end. After attaching the disks $D_1,\dots,D_n$ to $S$ (and after a small perturbation to obtain a smooth Riemannian metric), the resulting closed Riemannian surface $(M,\sigma_M)$ has genus $g\geq1$ with volume
$$\Vol(M)\leq\Vol(S)+c\sum_i^n\ell_i^2$$
for some positive constant $c$. By Gromov's theorem (Theorem \ref{gromov}), there exists a shortest non-contractible curve $a$ on $M$ of length at most $\sqrt{2}\Vol(M)^{1/2}$. Note that $a$ lies entirely within $S$ by our assumption on the disks $D_i$, and in fact corresponds to an essential curve on $S$. Since the restriction of the metric $M$ to $S$ is the original metric (at least up to small perturbation near the boundary), we have
$$\length_S(a)^2\leq C(\Vol(S)+\ell_1^2+\dots+\ell_n^2)$$
for some suitable constant $C>0$ which is the desired result. The case $g=0$ can be similarly deduced, by attaching suitable cylinders with cusps to $S$ instead of disks, and using Theorem \ref{bps} in place of Theorem \ref{gromov}.
\end{proof}

\subsection{Harmonic maps}  \label{sect:3.2}
Let $f:M\to N$ be a smooth map of Riemannian manifolds with possible boundary. Its second fundamental form $\II^f$ is the covariant derivative of $df\in C^\infty(M,T^*M\otimes f^*TN)$:
$$\II^f=\nabla(df)\in C^\infty(M,T^*M\otimes T^*M\otimes f^*TN).$$
Here, $\nabla$ is the connection on $T^*M\otimes f^*TN$ induced from the Levi-Civita connection $\nabla^M$ on $M$ and the pullback $\nabla^f$ of the Levi-Civita connection on $N$. Explicitly,
$$\II^f(X,Y)=\nabla_X^f(df(Y))-df(\nabla_X^{M}Y)$$
for vector fields $X$ and $Y$ on $M$. If $f:M\to N$ is an isometric embedding of Riemannian manifolds, then in fact $\II^f(X,Y)$ defines a section of the normal bundle of $M$ in $N$. In such a case, we shall write $\II^{M}=\II^f$ if there is no risk of confusion.

Given a smooth map $f:M\to N$ of Riemannian manifolds, let us define the \emph{tension field} of $f$ to be the trace of the second fundamental form
$$\Delta f=\tr_M\II^f\in C^\infty(M,f^*TN).$$
Here, $\tr_M$ indicates the contraction of a section of $T^*M\otimes T^*M$ with the dual metric on $T^*M$, viewed as a section of $TM\otimes TM$. In other words, given a local orthonormal frame $\{e_1,\dots,e_n\}$ of $TM$,
$$\Delta f=\II^f(e_1,e_1)+\dots+\II^f(e_n,e_n).$$
The map $f$ is said to be \emph{harmonic} if its tension field vanishes identically, i.e.
$$\Delta f=0.$$
We mention in passing that the equation $\Delta f=0$ admits a variational interpretation as the Euler-Lagrange equation for the energy functional (see \cite{es} for details).

We shall consider a more general notion of ``twisted'' harmonic maps, or harmonic sections. Given a representation $\rho:\pi_1 M\to\Isom(N)$ of the fundamental group of $M$ into the group of isometries of $N$, we consider the fibre bundle
$$N_\rho=\bar M\times_\rho N$$
over $M$, where $\bar M$ is the universal cover of $M$. By local triviality of $N_\rho$, we may equip $N_\rho$ with the metric induced from those of $M$ and $N$, and extend the notions of second fundamental forms and harmonicity to sections $s:M\to N_\rho$. In particular, a section $s$ is \emph{harmonic} if
$$\Delta s=\tr_M(\nabla(Ds))=0$$
where $Ds$ is the vertical differential of $s$, and $\nabla$ is the connection on $T^*M\otimes s^* VN_\rho$, for $VN_\rho$ the vertical tangent bundle of $N_\rho$.

\subsubsection{Constructing harmonic sections} \label{sect:3.2.1}

Let $\Sigma$ be a surface of type $(g,n)$ with $3g+n-3>0$. Let $\sigma$ be a Riemannian metric on $\Sigma$ such that $S=(\Sigma,\sigma)$ has constant sectional curvature $-1$ and each of its boundary curves is geodesic of length $1$. Fix a representation $\rho:\pi_1 S\to\SL_2(\C)$ with Zariski dense image in $\SL_2(\C)$. The group $\SL_2(\C)$ acts on $\H^3$ via isometries. We form the associated fibre bundle
$$\H_\rho^3=\bar S\times_\rho\H^3$$
where $\bar S$ is the universal cover of $S$. We shall construct harmonic sections of $\H_\rho^3$ satisfying certain conditions along the boundary curves of $S$. We recall the solution of Dirichlet problem for harmonic sections.

\begin{lemma}
\label{harm}
Given any smooth section $s_0:S\to\H_\rho^3$, there is a smooth section $s_{\infty}$ in the homotopy class of $s_0$ satisfying
\begin{enumerate}
	\item[\textup{(1)}] $\Delta s_{\infty}=0$ and
	\item[\textup{(2)}] $s_{\infty}|_{\del S}=s_0|_{\del S}$.
\end{enumerate}
\end{lemma}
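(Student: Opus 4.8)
The plan is to construct $s_\infty$ by the harmonic map heat flow method of Eells--Sampson \cite{es}, in the boundary version due to Hamilton \cite{hamilton}, carried out $\rho$-equivariantly on the universal cover. First I would pass to $\bar S$ and use the dictionary between smooth sections $s\colon S\to\Hcal$ and smooth $\rho$-equivariant maps $\tilde s\colon\bar S\to\H^3$, i.e.\ maps with $\tilde s(\gamma\cdot x)=\rho(\gamma)\cdot\tilde s(x)$ for $\gamma\in\pi_1 S$; under this correspondence the tension field of $s$ matches that of $\tilde s$, so it suffices to produce an equivariant harmonic $\tilde s_\infty$ equal to the lift $\tilde s_0$ of $s_0$ along $\del\bar S$ and equivariantly homotopic to $\tilde s_0$ rel boundary. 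To this end consider the tension field flow
$$\del_t\tilde s_t=\Delta\tilde s_t,\qquad \tilde s_t|_{\del\bar S}=\tilde s_0|_{\del\bar S},\qquad \tilde s_0\ \text{as initial data},$$
kept equivariant. In local coordinates on $S$ and $\H^3$ this is a quasilinear system whose leading symbol is that of the scalar Laplacian, hence strictly parabolic, so short-time existence and uniqueness of a smooth solution is standard parabolic theory (as in \cite{hamilton}); uniqueness forces the solution to stay equivariant and therefore to descend to a flow of sections $s_t$ of $\Hcal$.

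Next I would record that energy is nonincreasing along the flow: the variational formula of Proposition \ref{var}, whose boundary term vanishes because $\del_t\tilde s_t\equiv 0$ on $\del\bar S$, gives
$$\frac{d}{dt}E(s_t)=-\int_S\bigl|\Delta s_t\bigr|^2\,d\vol_S\le 0,$$
so $E(s_t)$ decreases to a finite limit and $\int_0^\infty\|\Delta s_t\|_{L^2}^2\,dt<\infty$.

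The heart of the matter is the a priori estimate upgrading short-time to long-time existence, and this is exactly where the negative curvature of $\H^3$ is used. For the energy density $e_t=\tfrac12|d\tilde s_t|^2$, which is $\rho$-invariant and so descends to a function on the compact surface $S$, one has the Bochner--Eells--Sampson identity
$$(\del_t-\Delta)e_t=-|\nabla d\tilde s_t|^2-\langle d\tilde s_t\cdot\Ric^S,\,d\tilde s_t\rangle+\bigl\langle R^{\H^3}(d\tilde s_t,d\tilde s_t)\,d\tilde s_t,\,d\tilde s_t\bigr\rangle,$$
whose last term is $\le 0$ since $\H^3$ has nonpositive sectional curvature, while $\Ric^S$ is bounded below because $S$ is compact; hence $(\del_t-\Delta)e_t\le c\,e_t$ for a constant $c=c(S)$. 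Combined with the uniform $L^2$ bound on $e_t$ from the energy monotonicity, a parabolic mean-value (Moser-type) inequality for subsolutions yields a uniform interior bound $\sup e_t\le C_0$; the needed boundary control of $e_t$ along $\del S$ — where, the tangential derivatives being frozen by the Dirichlet condition, only the normal derivative of $\tilde s_t$ must be estimated — is supplied by Hamilton's boundary analysis \cite{hamilton}. In particular, since $s_t$ agrees with $s_0$ along the nonempty boundary and has uniformly bounded differential, the image of $s_t$ stays in a fixed compact region of $\Hcal$, so the flow cannot escape to infinity in the fibers. Granting this uniform $C^1$ bound, differentiating the equation and invoking interior and boundary Schauder estimates bootstraps to uniform $C^k$ bounds on $S\times[1,\infty)$ for every $k$.

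Finally, to conclude I would use the energy estimate to pick $t_j\to\infty$ with $\|\Delta s_{t_j}\|_{L^2}\to 0$; by the uniform bounds and Arzel\`a--Ascoli a subsequence converges in $C^\infty$ to a smooth equivariant map $\tilde s_\infty$, which then satisfies $\Delta\tilde s_\infty=0$ and $\tilde s_\infty|_{\del\bar S}=\tilde s_0|_{\del\bar S}$. The flow $t\mapsto\tilde s_t$ is an equivariant homotopy rel boundary from $\tilde s_0$ to each $\tilde s_{t_j}$, and for large $j$ the $C^0$-close maps $\tilde s_{t_j}$ and $\tilde s_\infty$ are joined rel boundary by equivariant geodesic interpolation in the NPC space $\H^3$; descending to $S$ gives a harmonic section $s_\infty$ in the homotopy class of $s_0$ with $s_\infty|_{\del S}=s_0|_{\del S}$. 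I expect the main obstacle to be the fourth step: the uniform energy-density estimate, and especially its boundary component, on which long-time existence of the flow rests entirely, and which is precisely where the strict negativity of the curvature of $\H^3$ is indispensable.
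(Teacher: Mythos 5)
Your proposal follows essentially the same route as the paper: the Eells--Sampson/Hamilton heat flow with Dirichlet boundary data, energy monotonicity from Proposition~\ref{var}, a Bochner-formula estimate on the energy density exploiting the nonpositive curvature of $\H^3$, and deferral of the remaining analytic details to Hamilton and Corlette, exactly as the paper does. The only technical variation is that you propose to pass from the uniform $L^2$ energy bound to a time-independent pointwise bound on $e_t$ via parabolic Moser iteration, whereas the paper illustrates this step with Hamilton's exponential-weight maximum-principle argument (which by itself gives bounds growing in $t$, sufficient for long-time existence but with the time-independent estimate left to the cited references); both are standard, and your version, together with your boundary-pinning argument for compactness of the image, is if anything a slightly more self-contained account of the convergence step.
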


\begin{proof}
When the boundary of $S$ is empty, this was proved by Donaldson \cite{donaldson} using an adaptation of the heat flow argument of Eells-Sampson \cite{es} for harmonic maps to Riemannian manifolds of nonpositive curvature. The case where $S$ has nonempty boundary follows from work of Corlette \cite[Proof of Theorem 2.1]{corlette}, adapting the work of Hamilton \cite{hamilton} for harmonic maps on manifolds with boundary.
\end{proof}

We describe the boundary conditions we shall impose on our harmonic sections. Let $c_1,\dots,c_n$ be the geodesic boundary curves of $S$. For each $i=1,\dots,n$, let $\rho(c_i)$ denote the conjugacy class in $\SL_2(\C)$ determined by the monodromy of $\rho$ along $c_i$. Let us fix a geodesic parametrization $\gamma_i:[0,1]\to c_i$ of each boundary curve, and a trivialization
$$\gamma_i^*\H_\rho^3=\H^3\times[0,1]$$
such that the parallel transport
$\H^3=(\gamma_i^*\H_\rho^3)_0\to (\gamma_i^*\H_\rho^3)_1=\H^3$
is given by the action of the unique element $a_i\in\rho(c_i)$ lying in
$$\left\{\pm\mathbf1,\pm\begin{bmatrix}1 & 1\\ 0 & 1\end{bmatrix}\right\}\cup\left\{\begin{bmatrix}e^{i\theta}&0\\ 0 & e^{-i\theta}\end{bmatrix}:\theta\in(0,\pi)\right\}\cup\left\{\begin{bmatrix}\lambda&0\\ 0 & \lambda^{-1}\end{bmatrix}:\lambda\in\C,|\lambda|>1\right\}.$$
Let $h_i:c_i\to\H_\rho^3|_{c_i}$ be the unique section such that $h_i\circ\gamma_i:[0,1]\to\H^3\times[0,1]$ is the graph of the path $f_i:[0,1]\to\H^3$ defined as follows.
\begin{enumerate}
	\item[\textup{(1)}] If $a_i=\pm\mathbf1$, then $f_i\equiv j\in\H^3$.
	\item[\textup{(2)}] If $a_i$ is elliptic with unique fixed point $q\in\H^3$, then $f_i\equiv q$.
	\item[\textup{(3)}] If $a_i$ is parabolic then
	$$f_i(t)=\begin{bmatrix}1 & t\\0 & 1\end{bmatrix}j\text{ for all }t\in[0,1].$$
	The curve underlying $f_i$ has length $1$.
	\item[\textup{(4)}] If $a_i=\left[\begin{smallmatrix}\lambda&0\\0&\lambda^{-1}\end{smallmatrix}\right]$ (with $\lambda\in\C$ and $|\lambda|>1$) is hyperbolic, then
	$$f_i(t)=\begin{bmatrix}\lambda^t & 0\\0 & \lambda^{-t}\end{bmatrix}j\text{ for all }t\in[0,1]$$
	This does not depend on the choice of branch for the logarithm of $\lambda$. Note that $f_i$ is a geodesic path of length equal to $\length(a_i)$.
\end{enumerate}
Let us consider the section
\begin{equation}
\label{sectiondef}
h=h_1\sqcup\dots\sqcup h_n:\del S=c_1\sqcup\dots\sqcup c_n\to\H_\rho^3|_{\del S}.
\end{equation}
with the $h_i$ defined as above.

\begin{lemma}
\label{bound}
There is a smooth section $s:S\to\H_\rho^3$ satisfying $s|_{\del S}=h$.
\end{lemma}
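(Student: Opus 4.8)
The assertion is purely topological: one must extend a given section of the fibre bundle $\Hcal\to S$ from the closed submanifold $\del S$ to all of $S$. The plan is to exploit that the fibre $\H^3$ is contractible --- indeed $\H^3\simeq\SL_2(\C)/\SU(2)$ is diffeomorphic to $\R^3$, as one sees from the Iwasawa decomposition of $\SL_2(\C)$. It is convenient to pass to the universal cover: a smooth section of $\Hcal=\bar S\times_\rho\H^3$ over $S$ is the same datum as a smooth $\rho$-equivariant map $\tilde s\colon\bar S\to\H^3$ (one with $\tilde s(g\cdot x)=\rho(g)\cdot\tilde s(x)$ for $g\in\pi_1 S$), and the boundary section $h$ corresponds to such an equivariant map $\tilde h$ on the preimage $\del\bar S$ of $\del S$. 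So it suffices to extend $\tilde h$ to a smooth $\rho$-equivariant map on $\bar S$.

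First I would produce a continuous extension by a cell-by-cell (equivalently, obstruction-theoretic) argument. Fix a smooth triangulation of $S$ in which $\del S$ is a subcomplex and lift it to a $\pi_1 S$-invariant triangulation of $\bar S$; since $\pi_1 S$ acts freely it permutes the open cells freely, so it is enough to prescribe the map on one cell per orbit and propagate by equivariance. Use $\tilde h$ on the cells lying over $\del S$; choose arbitrary values on a set of representatives of the remaining $0$-cells; join prescribed endpoint values across each remaining $1$-cell by a path in $\H^3$ (possible since $\H^3$ is path connected); and fill in each remaining $2$-cell, whose boundary is already mapped to $\H^3$, using $\pi_1(\H^3)=0$. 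This yields a continuous section $s$ of $\Hcal$ with $s|_{\del S}=h$. Succinctly: the obstructions to extending a section over the relative $k$-skeleton of $(S,\del S)$ lie in $H^{k}(S,\del S;\pi_{k-1}\H^3)=0$.

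It remains to make $s$ smooth while preserving its boundary values. Fix a collar $U\cong\del S\times[0,1)$; since $U$ deformation retracts to $\del S$, the covering homotopy theorem gives an isomorphism $\Hcal|_U\cong r^*(\Hcal|_{\del S})$ with $r\colon U\to\del S$ the projection, under which $h$ --- which is smooth by construction --- extends to a smooth section $s'$ over $U$ with $s'|_{\del S}=h$. Each fibre of $\Hcal$ is a copy of $\H^3$, a Hadamard manifold in which any two points are joined by a unique geodesic, so on the annulus $\del S\times[1/3,2/3]$ I would interpolate between $s'$ and $s$ along the unique fibrewise geodesic, reparametrized by a cutoff function equal to $0$ near $\del S\times\{1/3\}$ and to $1$ near $\del S\times\{2/3\}$; this patches $s'$ near $\del S$ to $s$ outside the collar into a single section that equals $h$ on $\del S$ and is smooth near $\del S$. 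A final relative smoothing (mollify in local trivializations away from a neighborhood of $\del S$, using density of smooth sections) produces the desired smooth section. The only steps demanding any care are bookkeeping the equivariance and the relative smoothing near $\del S$, but neither is a genuine obstacle: the whole point is that $\H^3$ is contractible (and $\mathrm{CAT}(0)$), so no cohomological obstruction appears and fibrewise geodesic interpolation is available.
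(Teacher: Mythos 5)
Your argument is correct and rests on the same fundamental fact as the paper's: $\H^3$ is contractible, so there is no obstruction to extending sections of the flat $\H^3$-bundle. The implementations differ slightly. The paper starts from an arbitrary global section $s_0$ (whose existence is itself guaranteed by contractibility of the fibre) and then, on disjoint half-collar neighborhoods $N_i\cong [0,1]^2$ of the boundary curves, trivializes $\Hcal$ and homotopes $s_0|_{N_i}$ rel the inner edge so that the outer edge matches $h$; the only topological input is the elementary fact that any two paths $m_0,m_1\colon[0,1]\to\H^3$ with $m_j(1)=a_i\cdot m_j(0)$ are homotopic through such paths. You instead pass to the equivariant picture on $\bar S$ and build the extension of $\tilde h$ from scratch, cell by cell over a $\pi_1 S$-invariant triangulation, which is the explicit skeleton-by-skeleton form of the vanishing obstruction classes $H^{k+1}(S,\del S;\pi_k\H^3)=0$. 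Both are standard; yours is perhaps a bit more machinery than the geometry of the situation demands, but it has the advantage of producing the extension of $h$ directly rather than via an auxiliary section $s_0$. Your smoothing step is also more detailed than the paper's one-line remark and is a clean way to do it, using a collar trivialization and fibrewise geodesic interpolation (available because the fibre is $\mathrm{CAT}(0)$) followed by relative mollification in the interior. One small bookkeeping note: in your obstruction-theory aside, the obstruction to extending from the relative $(k-1)$-skeleton to the relative $k$-skeleton lies in $H^{k}(S,\del S;\pi_{k-1}\H^3)$, which is how your indices should be read; all these groups vanish, so nothing is affected.
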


\begin{proof}
We shall keep the notations introduced above. For each $i=1,\dots,n$, let $N_i$ be a closed half-collar neighborhood of the boundary curve $c_i$ such that $N_i\cap N_j=\emptyset$ for $i\neq j$. Let $n_i:[0,1]\times[0,1]\to N_i$ be a parametrization of $N_i$ with $n_i(s,1)=\gamma_i(s)$ and $n_i(0,t)=n_i(1,t)$ for all $s,t\in[0,1]$. We have a trivialization
$$n_i^*\H_\rho^3=\H^3\times[0,1]\times[0,1]$$
that restricts to the trivialization $\gamma_i^*\H_\rho^3=\H^3\times[0,1]$ chosen above. Thus, we have a bijective correspondence between the set of continuous sections $N_i\to\H_\rho^3|_{N_i}$ and the set $S_i$ of continuous maps $g:[0,1]^2\to\H^3$
satisfying $g(1,t)=a_i\cdot g(0,t)$ for every $t\in[0,1]$. Given any $g_0\in S_i$, there is a homotopy $g:[0,1]^2\times[0,1]\to\H^3$ such that $g_u=g(-,-,u)\in S_i$ for any $u\in[0,1]$ and
\begin{enumerate}
	\item[\textup{(1)}] $g(s,t,0)=g_0(s,t)$,
	\item[\textup{(2)}] $g(s,0,u)=g_0(s,0)$, and
	\item[\textup{(3)}] $g(s,1,1)=f_i(s)$
\end{enumerate}
for every $s,t,u\in[0,1]$. This can be seen, for instance, from the fact that between any two paths $m_0,m_1:[0,1]\to\H^3$ satisfying $m_0(1)=a_i\cdot m_1(0)$ for $j=0,1$ there is a homotopy $m:[0,1]\times[0,1]\to\H^3$ satisfying $m(-,j)=m_j$ for $j=0,1$ and $m(1,t)=a_i\cdot m(0,t)$ for every $t\in[0,1]$. Therefore, given any section $s_0:S\to\H_\rho^3$, considering the restrictions $s_0|_{N_i}$ for each $i=1,\dots,n$ and applying the above homotopy, we obtain a continuous section $s:S\to\H_\rho^3$ satisfying $s|_{\del S}=h$. It is easily seen that we may arrange $s$ to be smooth.
\end{proof}

\begin{corollary}
\label{harmco}
There is a harmonic section $s:S\to\H_\rho^3$ satisfying $s|_{\del S}=h$.
\end{corollary}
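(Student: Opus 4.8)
The plan is simply to feed the output of Lemma \ref{bound} into Lemma \ref{harm}. First I would invoke Lemma \ref{bound} to obtain a smooth section $s_0\colon S\to\Hcal$ with $s_0|_{\del S}=h$, where $h=h_1\sqcup\cdots\sqcup h_n$ is the boundary section built from the geodesic/parabolic/constant paths $f_i$ constructed above. Next I would apply Lemma \ref{harm} to this particular $s_0$: that lemma produces a smooth section $s_\infty$ in the homotopy class of $s_0$ satisfying $\Delta s_\infty=0$ and $s_\infty|_{\del S}=s_0|_{\del S}$. Taking $s=s_\infty$ then gives a harmonic section with $s|_{\del S}=s_0|_{\del S}=h$, which is exactly the claim.

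The only points worth flagging are bookkeeping rather than genuine obstacles. One should note that we are working under the standing hypotheses of Section 4.2 — namely that $S=(\Sigma,\sigma)$ carries the hyperbolic metric with geodesic boundary curves of length $1$, and that $\rho\colon\pi_1 S\to\SL_2(\C)$ has Zariski dense image — since these are the hypotheses under which Lemma \ref{harm} was stated; the negative curvature of $\H^3$ (entering through the Bochner formula and maximum principle in the proof of Lemma \ref{harm}) is what makes the heat flow converge. Beyond citing these two lemmas there is nothing further to do: the content has already been established, and the corollary is their immediate composition.
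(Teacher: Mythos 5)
Your proposal is correct and is exactly the paper's own proof: the corollary is obtained by feeding the section produced by Lemma~\ref{bound} into Lemma~\ref{harm}. The bookkeeping remarks about the standing hypotheses of Section~4.2 are accurate but not strictly necessary to record.
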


\begin{proof}
This follows by combining Lemmas \ref{harm} and \ref{bound}.
\end{proof}

\subsection{Systoles of local systems}  \label{sect:3.3}
\subsubsection{Archimedean systoles} \label{sect:3.3.1}
Let $\Sigma$ be a surface of type $(g,n)$ with $3g+n-3>0$, having boundary curves $c_1,\dots,c_n$. Given a representation $\rho:\pi_1\Sigma\to\SL_2(\C)$ or a corresponding local system on $\Sigma$, we define its \emph{systole} to be
$$\sys(\rho)=\inf\{\length\rho(a):\text{$a\subset\Sigma$ is an essential curve}\}$$
where $\length\rho(a)$ denotes the translation length of the conjugacy class $\rho(a)$ in $\SL_2(\C)$ acting on the hyperbolic three-space $\H^3$, as defined in Section \ref{sect:3.1.3}. This recovers the usual notion of systole for hyperbolic surfaces (see Section \ref{sect:3.1.5}) when $\rho:\pi_1\Sigma\to\SL_2(\R)$ is a Fuchsian representation arising from a hyperbolic structure on $\Sigma$ with geodesic boundary. By the relation between translation lengths and traces of elements in $\SL_2(\C)$ given in Section \ref{sect:3.1.3}, we see that Theorem \ref{theorem2} follows from the following claim (see Section \ref{sect:3.3.3}).

\begin{theorem}
\label{sysarch}
There exists a real number $C>0$ such that, for any representation $\rho:\pi_1\Sigma\to\SL_2(\C)$, we have $\sys(\rho)^2\leq C(2\pi|\chi(\Sigma)|+\sum_{i=1}^n\length\rho(c_i)^2)$.
\end{theorem}

\begin{proof}
Let us first choose a Riemannian metric $\sigma$ on $\Sigma$ such that $S=(\Sigma,\sigma)$ is a hyperbolic surface with geodesic boundary curves $c_1,\dots,c_n$, each of length $1$. Let us fix a representation
$\rho:\pi_1\Sigma\to\SL_2(\C)$, which we assume has Zariski dense image in $\SL_2(\C)$ since otherwise the theorem is easy to deduce; this is easily seen from the classification of algebraic subgroups of $\SL_2(\C)$. Consider the fibre bundle
$$\H_\rho^3=\bar\Sigma\times_\rho\H^3$$
where $\bar \Sigma$ is the universal cover of $\Sigma$. We may equip $\H_\rho^3$ with the Riemannian metric induced from the hyperbolic metric on $\H^3$ and the metric $\sigma$ on $S$. Let us fix a section $h:\del S\to\H_\rho^3$ as given by equation (\ref{sectiondef}) in Section \ref{sect:3.2}. By Corollary \ref{harmco}, there exists a harmonic section
$$s:S\to\H_\rho^3$$
satisfying $s|_{\del S}=h$. For each $\epsilon>0$, let $S(\epsilon)$ be the surface equipped with the scaled metric $\epsilon\sigma$, and let $\H_\rho^3[\epsilon]$ be the fibre bundle with metric induced from the hyperbolic metric $\sigma_{\H^3}$ on $\H^3$ and the metric on $S(\epsilon)$. Note that $s:S(\epsilon)\to\H_\rho^3[\epsilon]$ remains harmonic. Let
$$f:\bar\Sigma\to\H^3$$
be the $\rho$-equivariant map corresponding to $s$. The nonnegative symmetric tensor $f^*\sigma_{\H^3}$ on $\bar\Sigma$ descends to $\Sigma$ since $f$ is $\rho$-equivariant. Let us consider the metric
$$\sigma(\epsilon)=f^*\sigma_{\H^3}+\epsilon\sigma$$
on the surface $\Sigma$. In fact, $\sigma(\epsilon)$ is precisely metric induced from $\H_\rho^3[\epsilon]$ on the image $M(\epsilon)=s(S(\epsilon))$ of the harmonic section $s:S(\epsilon)\to\H_\rho^3[\epsilon]$. Note that
$$\length\rho(a)\leq\length_{M(\epsilon)}(a)$$
for any essential closed geodesic $a$ on $M(\epsilon)$, the left hand side being the translation length of $\rho(a)$ as it acts on $\H^3$. In particular, we see that $\sys(\rho)\leq \sys(M(\epsilon))$ for every $\epsilon>0$. Now, note that the lengths of the boundary curves $c_1,\dots,c_n$ of $M(\epsilon)$ satisfy
$$\lim_{\epsilon\to0}\length_{M(\epsilon)}(c_i)=\left\{\begin{array}{l l}
1 & \text{if $\rho(c_i)$ is parabolic, and}\\
\length(\rho(c_i))&\text{otherwise.}\end{array}\right.$$
Since $\sigma(\epsilon)$ tends to the symmetric tensor $f^*\sigma_{\H^3}$ uniformly as $\epsilon\to0$, we have
$$\lim_{\epsilon\to0}\Vol(M(\epsilon))=\int_\Sigma d\vol_{f^*\sigma_{\H^3}}.$$
(The ``volume form'' $d\vol_{f^*\sigma_{\H^3}}$ associated to the nonnegative symmetric tensor $f^*\sigma_{\H^3}$ may vanish at some places.) 
Therefore, by Corollary \ref{grocor}, for all sufficiently small $\epsilon>0$, we have
$$\sys(M(\epsilon))^2\leq C_1\left(\int_\Sigma d\vol_{f^*\sigma_{\H^3}}+\sum_{i=1}^n\max\{1,\length(\rho(c_i))^2\}\right)$$
for some absolute constant $C_1>0$. Therefore, it will suffice to show that
$$\int_\Sigma d\vol_{f^*\sigma_{\H^3}}\leq 2\pi|\chi(\Sigma)|+c$$
for some absolute constant $c>0$. The remainder of the proof is devoted to demonstrating this claim (due essentially to Reznikov), combining computations in the proof of Lemma 2.5 of Deroin-Tholozan \cite{dt} with an argument of Reznikov \cite{reznikov}.

By a result of Sampson \cite[Corollary of Theorem 3]{sampson}, the open subset $U\subseteq\Sigma$ corresponding to the locus in $\bar\Sigma$ where $df$ has full rank is either empty or dense. Since the former case is trivial, we shall assume that $df$ has full rank on a dense open subset of $\Sigma$. Locally, given a choice of orthonormal frame $e_1,e_2$ of $TM(\epsilon)$, by Gauss's theorem \cite[Chapter 6, Theorem 2.5]{docarmo} we have
$$K^{M(\epsilon)}=K^{\H_\rho^3[\epsilon]}(TM(\epsilon))+\langle\II^{M(\epsilon)}(e_1,e_1),\II^{M(\epsilon)}(e_2,e_2)\rangle-\|\II^{M(\epsilon)}(e_1,e_2)\|^2$$
where $\II^{M(\epsilon)}$ is the second fundamental form of $M(\epsilon)$ immersed in $\H_\rho^3[\epsilon]$, and the inner product is with respect to the metric on the normal bundle $NM(\epsilon)$ of $M(\epsilon)$. Choosing a local orthonormal frame $\{n_1,n_2,n_3\}$ of $NM(\epsilon)$, we have 
\begin{align*}
&\langle\II^{M(\epsilon)}(e_1,e_1),\II^{M(\epsilon)}(e_2,e_2)\rangle-\|\II^{M(\epsilon)}(e_1,e_2)\|^2\\
&\quad=\sum_{k=1}^3\det(\langle\II^{M(\epsilon)}(e_i,e_j),n_k\rangle)\\
&\quad=\frac{1}{\Jac(s)^2}\sum_{k=1}^3\det_{\epsilon\sigma}(\langle\II^{M(\epsilon)}(ds(-),ds(-)),n_k\rangle)\\
&\quad=\frac{3}{\Jac(s)^2}\E\left(\det_{\epsilon\sigma}\langle\II^{M(\epsilon)}\circ ds,n\rangle\right)
\end{align*}
where the average $\E$ is taken over all unitary normal vectors $n$ in $NM(\epsilon)$. Note that we have by definition of the second fundamental form of $s$
\begin{align*}
\II^s(X,Y)&=\nabla_{X}^{s}Ds(Y)-Ds(\nabla_X^{\epsilon\sigma}Y)=\nabla_X^{s^*\sigma(\epsilon)}ds(Y)-ds(\nabla_X^{\epsilon\sigma}Y)\\
&=\II^{M(\epsilon)}(ds(X),ds(Y))+ds\left(\nabla_X^{s^*\sigma(\epsilon)}Y-\nabla_{X}^{\epsilon\sigma}Y\right).
\end{align*}
But note that
$$\II^{M(\epsilon)}(ds(X),ds(Y))\quad\text{and}\quad ds\left(\nabla_X^{s^*\sigma(\epsilon)}Y-\nabla_{X}^{\epsilon\sigma}Y\right)$$
are orthogonal. Therefore, taking the trace of both sides and using the harmonicity of $s$ (i.e.~$\tr_\sigma\II^s=0$), we have $\tr_{\epsilon\sigma}(\II^{M(\epsilon)}\circ ds)=0$, and \emph{a fortiori}
$$\tr_{\epsilon\sigma}\langle\II^{M(\epsilon)}(ds(-),ds(-)), n\rangle=0$$
for every normal vector $n\in NM(\epsilon)$. Since $\langle\II^{M(\epsilon)}(ds(-),ds(-)),n\rangle$ is a symmetric tensor, its eigenvalues (with respect to the metric $\epsilon\sigma$ on the surface) are real, and the vanishing of trace therefore implies $\det_{\epsilon\sigma}\langle\II^{M(\epsilon)}(ds(-),ds(-)),n\rangle\leq0$. Thus, we conclude from Gauss's theorem that
$$-K^{\H_\rho^3[\epsilon]}(TM(\epsilon))\leq -K^{M(\epsilon)}.$$
Applying the Gauss-Bonnet formula, we therefore have
\begin{align*}
\int_{M(\epsilon)}-K^{\H_\rho^3[\epsilon]}(TM(\epsilon))\:d\vol_{\sigma(\epsilon)}&\leq\int_{M(\epsilon)}(-K^{M(\epsilon)})\:d\vol_{\sigma(\epsilon)}\\
&\leq2\pi|\chi(\Sigma)|+\int_{\del M(\epsilon)} k^{M(\epsilon)}\:ds_{\sigma(\epsilon)}
\end{align*}
where $k^{M(\epsilon)}$ is the signed curvature and $ds_{\sigma(\epsilon)}$ is the line element of $\del M(\epsilon)$. Note that $-K^{\H_\rho^3[\epsilon]}$ is everywhere nonnegative by part (1) of Lemma \ref{scurv}, since $\H^3$ and $S(\epsilon)$ have nonpositive sectional curvature. Furthermore, we have
$$\lim_{\epsilon\to0}K^{\H_\rho^3[\epsilon]}(TM(\epsilon))=K^{\H^3}(df(T\Sigma))=-1$$
on the dense open set $U\subset\Sigma$ where $df$ has full rank, by part (2) of Lemma \ref{scurv}. Recalling also that $\lim_{\epsilon\to0}\sigma(\epsilon)=f^*\sigma_{\H^3}$, we apply Fatou's lemma to deduce that
\begin{align*}
\int_\Sigma d\vol_{f^*\sigma_{\H^3}}&\leq\liminf_{\epsilon\to0}\int_{M(\epsilon)}-K^{\H_\rho^3[\epsilon]}(TM(\epsilon))\:d\vol_{\sigma(\epsilon)}\\
&\leq2\pi|\chi(\Sigma)|+\liminf_{\epsilon\to0}\int_{\del M(\epsilon)}k^{M(\epsilon)}ds_{\sigma(\epsilon)}.
\end{align*}
It therefore remains only to bound the total curvature of $\del M(\epsilon)$ for all sufficiently small $\epsilon$. For $\epsilon>0$, let $c_i(\epsilon)$ be the curve $c_i$ equipped with the metric such that its length is $\epsilon$. We claim that
$$\lim_{\epsilon\to0}\int_{c_i(\epsilon)}k^{\H_\rho^3[\epsilon]}\:ds_{\sigma(\epsilon)}=\left\{\begin{array}{l l}0 & \text{if $\rho(c_i)$ is central, elliptic, or hyperbolic,}\\
1 &\text{if $\rho(c_i)$ is parabolic.}\end{array}\right.$$
Indeed, we note that $h:c_i(\epsilon)\to\H_\rho^3[\epsilon]|_{c_i}$ is a geodesic path if $\rho(c_i)$ is central, elliptic, or hyperbolic. We shall therefore assume that $\rho(c_i)$ is parabolic. In terms of the global coordinates $(x_1,x_2,x_3)$ on $\H^3$ given in Section \ref{sect:3.1.3}, the Christoffel symbols for the Levi-Civita connection on $\H^3$ are
\begin{align*}
\Gamma_{ij,k}&=\langle\nabla_{\del/\del x_i}(\del/\del x_j),\del/\del x_k\rangle\\
&=\frac{1}{2}\left(\frac{\del}{\del x_j}\left\langle\frac{\del}{\del x_i},\frac{\del}{\del x_k}\right\rangle+\frac{\del}{\del x_i}\left\langle\frac{\del}{\del x_j},\frac{\del}{\del x_k}\right\rangle-\frac{\del}{\del x_k}\left\langle\frac{\del}{\del x_i},\frac{\del}{\del x_j}\right\rangle\right).
\end{align*}
Thus, we have $\Gamma_{11,1}=\Gamma_{11,2}=0$ and $\Gamma_{11,3}=1/x_3$, and
$$\frac{D}{dt}\frac{d f_i}{dt}=\Gamma_{11,3}x_3^2\frac{\del}{\del x_3}=\frac{1}{x_3}\frac{\del}{\del x_3}.$$
In particular, we have $k^{\H^3}\equiv 1$ on the image of $f_i:[0,1]\to\H^3$ (in the notation of Section \ref{sect:3.2.1}). It is easy to observe that $k^{\H_\rho^3[\epsilon]}$ tends to $k^{\H^3}$ uniformly as $\epsilon\to0$. The desired result follows from this.
\end{proof}

\subsubsection{Nonarchimedean systoles} \label{sect:3.3.2}
Let $\Sigma$ be a surface of type $(g,n)$ with $3g+n-3>0$, having boundary curves $c_1,\dots,c_n$. Let $\Ocal$ be a discrete valuation ring, and let $F$ be its fraction field. Given a representation $\rho:\pi_1\Sigma\to\SL_2(F)$ or a corresponding local system on $\Sigma$, we define its \emph{systole} to be
$$\sys(\rho)=\inf\{\length\rho(a):\text{essential curve $a\subset\Sigma$}\}$$
where $\length\rho(a)$ denotes the translation length of the conjugacy class $\rho(a)$ acting on the Bruhat-Tits tree $\Lambda$ of $\SL_2(F)$, as defined in Section \ref{sect:3.1.4}. Theorem \ref{theorem3} is a special case of the following result (see Section \ref{sect:3.3.3}).

\begin{theorem}
\label{sysprop}
There exists a real number $C>0$ such that, for any representation $\rho:\pi_1\Sigma\to\SL_2(F)$, we have
$\sys(\rho)^2\leq C\cdot \sum_{i=1}^n(\length\rho(c_i))^2$.
\end{theorem}

\begin{proof}
Let us consider the fiber bundle
$$\Lambda_\rho=\bar\Sigma\times_\rho\Lambda$$
over $\Sigma$, where $\bar\Sigma$ is the universal cover of $\Sigma$. Our first step is to construct a section $s:\Sigma\to\Lambda_\rho$ satisfying the following two conditions.
\begin{enumerate}
	\item[\textup{(1)}] The subset $B\subset\Sigma$ where $s$ fails to be smooth is finite.
	\item[\textup{(2)}] For each $i=1,\dots,n$, given some parametrization $\gamma_i:[0,1]\to c_i$ of the boundary curve $c_i$ and trivialization
$$\gamma_i^*\Lambda_\rho\simeq \Lambda\times [0,1]$$
the restriction $\gamma_i^*s:[0,1]\to \Lambda\times [0,1]$ is the graph of a parametrization of a geodesic curve whose length equals $\length(\rho(c_i))$.
\end{enumerate}
In condition (1) above and our arguments, in consideration of the smoothness of $s$ we shall take into account the various ``apartments'' of $\Lambda$, as in the work of Gromov and Schoen (see \cite[p.178]{grs} for an elucidation). The construction of $s$ can be given as follows. Let $x_0\in\Sigma$ be a fixed base point lying on the interior of the surface. For each $j=1,\dots,n$, let $x_j$ be a base point on $c_j$, and let $\gamma_j$ be a simple loop on $c_j$ based at $x_j$, so that the interior of $\Sigma$ lies to the left as one traves along $\gamma_j$. Let $\alpha_1,\dots,\alpha_{2g}$ be simple loops based on $x_0$, and let $\beta_j$ for each $j=1,\dots,n$ be a simple path from $x_0$ to $x_j$ not intersecting any of the aforementioned loops except (at the endpoints), such that the sequence
$$(\alpha_1,\alpha_2,\dots,\alpha_{2g-1},\alpha_{2g},\beta_1\gamma_1\beta_1^{-1},\dots,\beta_n\gamma_{n}\beta_n^{-1})$$
of loops gives the standard generators of $\pi_1(\Sigma,x_0)$ pairwise intersecting only at the base point $x_0$. Fix a trivialization $\Psi_0:(\Lambda_\rho)_{x_0}\simeq\Lambda$ of the fiber of $\Lambda_\rho$ above $x_0$. By parallel transport this induces trivializations
\begin{itemize}
	\item $\alpha_i^*\Lambda_\rho\simeq[0,1]\times\Lambda$ for $i=1,\dots,2g$, and
	\item $\beta_j^*\Lambda_\rho\simeq[0,1]\times\Lambda$ for $j=1,\dots,n$,
\end{itemize}
agreeing with $\Psi_0$ at $0$. For $j=1,\dots,n$, let $\Psi_j:(\Lambda_\rho)_{x_j}\simeq\Lambda$ be obtained from the second set of trivializations at $1$. By parallel transport, we have a trivialization
\begin{itemize}
	\item $\gamma_j^*\Lambda_\rho\simeq[0,1]\times\Lambda$ for $j=1,\dots,n$,
\end{itemize}
agreeing with $\Psi_j$ at $0$. Let $a_i$ be the curve underlying $\alpha_i$. Similarly, let $b_j$ denote the $1$-submanifold of $\Sigma$ underlying $\beta_i$. By construction, the curve underlying the loop $\gamma_j$ is the boundary curve $c_j$. Let
$$C=\bigcup_{i=1}^{2g} a_i\cup\bigcup_{j=1}^n(b_j\cup c_j).$$
Fix $v_0\in\Lambda$. We have a section $t:C\to {\Lambda_\rho}|_C$ given by the following properties under the various trivializations above:
\begin{itemize}
	\item $t(x_0)=v_0$ under the trivialization $\Psi_0$,
	\item the composition $t\circ\alpha_i$ corresponds to the graph of the geodesic path on $\Lambda$ from $v_0$ to $\rho(\alpha_j)\cdot v_0$,
	\item the composition $t\circ\gamma_j$ corresponds to the graph of a geodesic path on $\Lambda$ (from point $v_j$ to $v_j'$, say) whose length equals $\length\rho(c_j)$, and
	\item the composition $t\circ\beta_j$ corresponds to the graph of the geodesic path on $\Lambda$ from $v_0$ to the point $v_j$ chosen above.
\end{itemize}
Our goal is to extend $t$ to a section on the entire surface $\Sigma$ as follows. Note that the latter corresponds to a $\rho$-equivariant map $\bar\Sigma\to\Lambda$. The complement of the preimage $\bar C$ of $C$ in $\bar\Sigma$ divides $\bar\Sigma$ into a countable collection of components, each of which is an open fundamental domain for the covering $\bar\Sigma\to\Sigma$. Let $\Fcal$ be one of them. Since $t$ already specifies for us a $\rho$-equivariant map $\bar C\to\Lambda$ (and in particular a map $\del\Fcal\to\Lambda$ we shall call $f$), to construct a section $s:\Sigma\to\Lambda_\rho$ extending $t$ it remains only to construct a map $F:\bar\Fcal\to\Lambda$ such that $F|_{\del\Fcal}=f$. Note that the image of $f$ is contained in a subtree $T\subset\Lambda$ with finitely many edges.

Let $D$ be the closed unit disk. Let us fix a homeomorphism $\Fcal\simeq D$ which is a diffeomorphism on the interior and restricts to a diffeomorphism on each of the ``edges'' of $\del\Fcal$. Fix $0<\epsilon<1$. There is then a homotopy $g:[0,\epsilon]\times \del D\to T$ such that $g_0=f$ under the identification above, and $g|_{(0,\epsilon]}$ is smooth. (This serves to ``round out the corners'' of the map $f$.) Consideration of deformation retractions of $T$ to a point $p\in T$ (or invoking a picture as in \cite[p.178]{grs}) shows that we may extend $g$ to a homotopy $[0,1]\times \del D\to T$ such that:
\begin{enumerate}
	\item [(i)] $g|_{(0,1]}$ is smooth at all but finitely many points, and
	\item [(ii)] $g_1\equiv p$.
\end{enumerate}
By construction, this factors through the quotient
$$F:\bar\Fcal\simeq D\simeq([0,1]\times \del D)/((x,1)\sim(x',1))\to T$$
where the homeomorphism from $D\simeq ([0,1]\times \del D)/((x,1)\sim(x',1))$ appearing above is chosen in the obvious way (so that in particular it is a diffeomorphism away from the origin $o\in D$). This gives rise to a section $s:\Sigma\to\Lambda_\rho$ which is smooth away from $C$ and finitely many points. By considering restrictions of $\Lambda_\rho$ over open neighborhoods of $a_i$, $b_j$, and $c_j$, we see that $s$ may be deformed to a section which is smooth away from a finite set $B\subset\Sigma$. This completes the construction of the section $s:\Sigma\to\Lambda_\rho$ satisfying the conditions (1) and (2) mentioned in the beginning.

Let $\sigma$ be an arbitrary Riemannian metric on $\Sigma$. On the complement $\Sigma\setminus B$ of the finite set $B$ given as in (1), we have a Riemannian metric given by
$$\sigma'(\epsilon)=s^*(\sigma_\Lambda+\epsilon\sigma)$$
for $\epsilon>0$, where $\sigma_\Lambda$ is the Euclidean piecewise metric on the Bruhat-Tits tree $\Lambda$. Moreover, for any curve $a\subset\Sigma$ its length with respect to $\sigma'(\epsilon)$ is well-defined. Setting $s(\epsilon)=\inf\{\length(a,\sigma'(\epsilon)):\text{$a\subset\Sigma$ essential curve}\}$, we find that
\begin{align*}
\tag{a}\sys(\rho)\leq s(\epsilon).
\end{align*}
Our next step is to modify $\sigma'(\epsilon)$ to obtain a genuine Riemannian metric on $\Sigma$. More precisely, let $\{D_x\}_{x\in B}$ be a collection of pairwise disjoint closed disks lying on the interior of $\Sigma$, such that, for each $x\in B$,
\begin{enumerate}
	\item[(i)] $D_x$ contains $x$ in its interior,
	\item[(ii)] $\Vol(D_x,\sigma'(\epsilon))<\epsilon^2$, and
	\item[(iii)] $\length(\del D_x,\sigma'(\epsilon))<\epsilon$.
\end{enumerate} 
Let $\sigma(\epsilon)$ be a Riemannian metric on $\Sigma$ obtained by extending the metric $\sigma'(\epsilon)$ on the complement of $\bigcup_{x\in B}D_x$ to $\Sigma$ in such a way that $\sigma(\epsilon)|_{D_x}$ is the round metric on a half-sphere with base circumference equal to $\length(\del D_x,\sigma'(\epsilon))$, up to negligible smoothing. By our construction, we have
\begin{align*}
\tag{b}\Vol(\Sigma,\sigma(\epsilon))=\Vol(\Sigma\setminus B,\sigma'(\epsilon))+O(\epsilon^2)
\end{align*}
and
\begin{align*}
\tag{c}s(\epsilon)\leq\sys(\Sigma,\sigma(\epsilon)).
\end{align*}
To observe the latter, we note that a shortest path between any two points on the boundary of the half-sphere $(D_x,\sigma(\epsilon))$ may be taken to be on the boundary $\del D_x$. Using the fact that $\sigma'(\epsilon)$ tends to $s^*\sigma_\Lambda$ uniformly on $\Sigma'$ as $\epsilon$ tends to $0$ and the fact that $\Lambda$ is a one-dimensional Riemannian simplicial complex, we conclude that
$$\lim_{\epsilon\to0}\Vol(\Sigma\setminus B,\sigma'(\epsilon))=0.$$
Therefore, taking $\epsilon>0$ sufficiently small and combining the observations (a), (b), and (c) above with Corollary \ref{grocor}, we obtain the desired result.
\end{proof}

\begin{remark}
The essence of our argument in the proof of Theorem \ref{sysprop} is that, given a (twisted) map $f:M\to N$ from a compact manifold $M$ to another manifold $N$ of strictly smaller dimension, the pullback $f^*\sigma_N$ of any Riemannian metric $\sigma_N$ on $N$ will be a degenerate symmetric form on $M$ and therefore have zero volume element. Combined with systolic inequalities for \emph{essential} manifolds (in the sense of Gromov \cite{gromov}), this has an elementary consequence that, given any smooth map
$$f:M\to N$$
from a closed essential manifold $M$ to (for instance) another manifold $N$ of strictly smaller dimension without arbitrarily short nontrivial loops, the induced map on the fundamental groups $\pi_1 M\to\pi_1 N$ cannot be injective.
\end{remark}

\subsubsection{Proofs of Theorems \ref{theorem2} and \ref{theorem3}}\label{sect:3.3.3}
As before, let $\Sigma$ be a surface of type $(g,n)$ with $3g+n-3>0$, having boundary curves $c_1,\dots,c_n$. We now restate and deduce Theorems \ref{theorem2} and \ref{theorem3} as special cases of Theorems \ref{sysarch} and \ref{sysprop}, respectively.

\begin{thm}{1.2}
For each $B\geq0$, there is $A\geq0$ depending continuously on $B$ such that, given any representation $\rho:\pi_1\Sigma\to\SL_2(\C)$ whose boundary traces all have absolute value at most $B$, there is an essential curve $a\subset\Sigma$ with $|\tr\rho(a)|\leq A$.
\end{thm}

\begin{proof}
Let $B\geq0$ be given, and let $\rho:\pi_1\Sigma\to\SL_2(\C)$ be a representation such that $|\tr\rho(c_i)|\leq B$ for every boundary curve $c_i$ of $\Sigma$. We see, using the relationship (see Section \ref{sect:3.1.3}) between the trace of an element in $\SL_2(\C)$ and its translation length on $\H^3$, that there is a constant $B'$, depending continuously on $B$, with $\length\rho(c_i)\leq B'$ for each boundary curve $c_i$. Theorem \ref{sysarch} then shows that
$$\sys(\rho)^2\leq C(2\pi|\chi(\Sigma)|+nB'^2)$$
for some absolute constant $C$. By the relationship between trace and translation length, the above inequality shows that there is a constant $A$, depending continuously on $B$, such that we have $|\tr\rho(a)|\leq A$ for some essential curve $a\subset\Sigma$.
\end{proof}

\begin{thm}{1.3}
Let $\Ocal$ be a discrete valuation ring with fraction field $F$. Given any representation $\rho:\pi_1\Sigma\to\SL_2(F)$ whose boundary traces all take values in $\Ocal$, there is an essential curve $a\subset\Sigma$ with $\tr\rho(a)\in\Ocal$.
\end{thm}

\begin{proof}
Let $\rho:\pi_1\Sigma\to\SL_2(F)$ be as in the hypothesis of the theorem. Recall that, for any $g\in\SL_2(F)$, its translation length on the Bruhat-Tits tree is zero if and only if and only if $\tr(g)\in\Ocal$. Thus, by Theorem \ref{sysprop}, we have $\sys(\rho)^2\leq C\cdot n\cdot 0^2=0$ for some absolute constant $C$. Since the translation length of an element in $\SL_2(F)$ is a nonnegative integer, this means that there is an essential curve $a\subset\Sigma$ such that $\length\rho(a)=0$, or in other words $\tr\rho(a)\in\Ocal$, as desired.
\end{proof}

\section{Compactness criterion}  \label{sect:4}
\subsection{Overview}\label{sect:4.0}
Let $\Sigma$ be a surface of type $(g,n)$ satisfying $3g+n-3>0$. The purpose of this section is to prove Theorem \ref{theorem4}. The intuition behind our proof is the following geometric picture. By the work of Goldman (see for example \cite{goldman3}), the variety $X$ carries an algebraic Poisson structure for which the subvarieties $X_k$, $k\in\A^n(\C)$, form symplectic leaves. Given a pants decomposition $P$ of $\Sigma$, the immersion $P\to\Sigma$ induces a morphism
$$
\tr_P:X_k\to X(P)\simeq\A^{3g+n-3}
$$
which forms an integrable system in the sense of Hamiltonian mechanics. The generic fibre of this morphism is an algebraic torus of dimension $3g+n-3$, preserved by the pairwise commuting actions of Dehn twists along curves in $P$. (Moreover, these Dehn twists actions would arise from suitable Hamiltonian flows associated to the trace functions.) This setup was notably utilized by Goldman \cite{goldman} to prove the ergodicity of mapping class group dynamics on the locus of $\SU(2)$-local systems in $X_k(\R)$. In our context, this setup suggests that the task of descent for points $\rho\in X_k(\C)$ divides into two steps:
\begin{enumerate}
	\item[\textup{(1)}] (``action variables'') Find a transformation $\gamma\in\Gamma$ so that $\tr_P(\gamma^*\rho)\in\A^n(\C)$ lies in some predetermined compact set, and
	\item[\textup{(2)}] (``angle variables'') Descend along fibers of $\tr_P$ using the Dehn twist actions.
\end{enumerate}
Unraveling the definitions, it can be observed (see Corollary \ref{pantsco} below) that the boundedness result (Theorem \ref{theorem2}) for systoles of $\SL_2(\C)$-local systems allows us to achieve the first step without difficulty (for some pants decomposition $P$). In fact, combining this with an in-depth analysis of fibers of $\tr_P$ in \cite{whang4}, we can give a proof of Theorem \ref{theorem1} along the above lines, as shown below.

\begin{corollary}
\label{pantsco}
For each $B\geq0$, there is $A\geq0$ depending continuously on $B$ such that, given any representation $\rho:\pi_1\Sigma\to\SL_2(\C)$ whose boundary traces all have absolute value at most $B$, there is a pants decomposition $P=a_1\sqcup\dots\sqcup a_{3g+n-3}$ of $\Sigma$ with $|\tr\rho(a_i)|\leq A$ for all $i=1,\dots,3g+n-3$.
\end{corollary}

\begin{proof}
Let $\rho:\pi_1\Sigma\to\SL_2(\C)$ be given as above. By Theorem \ref{theorem2} there is a constant $C_1=C_1(B)$ and an essential curve $a_1\subset\Sigma$ with $|\tr\rho(a_1)|\leq C_1$. Considering the restriction of $\rho$ to (a component of) the surface $\Sigma|a_1$, again by Theorem \ref{theorem2} there is a constant $C_2=C_2(C_1,B)$ and an essential curve $a_2$ of $\Sigma|a_1$ with $|\tr\rho(a_2)|\leq C_2$. (We note that, since there are only finitely many mapping class group equivalence classes of essential curves on $\Sigma$, considered up to isotopy, we can take $C_2$ to be independent of the class of the curve $a_1\subset\Sigma$.) We next consider the restriction of $\rho$ to $(\Sigma|a_1)|a_2$, and so on. Repeating this enough times, we obtain a constant $A=\max\{C_1,\dots,C_{3g+n-3}\}$ depending only on $B$ and a pants decomposition $P=a_1\sqcup a_2\sqcup\dots\sqcup a_{3g+n-3}$ such that $|\tr\rho(a_i)|\leq A$ for all $i=1,\dots,3g+n-3$.
\end{proof}

\begin{thm}{1.1}
The nondegenerate integral points in $X_k(\Z)$ consist of finitely many mapping class group orbits. There exists a parabolic proper closed subvariety of $X_k$ whose orbit gives precisely the locus of degenerate points on $X_k$.
\end{thm}

\begin{proof}
We note that, up to mapping class group equivalence and isotopy, there are only finitely many pants decompositions of $\Sigma$. Let $\Pcal=\{P_1,\dots,P_m\}$ be a complete set of representatives. By Corollary \ref{pantsco}, there is a compact subset $M\subset\C^{3g+n-3}$ such that, for any $\rho\in X_k(\C)$, we have $\tr_P(\gamma^*\rho)\in M$ for some $\gamma\in\Gamma$ and $P\in\Pcal$.

Let $\Gamma_P\leq\Gamma$ be the subgroup of the mapping class group generated by Dehn twists along curves in $P$. In light of the previous paragraph, to prove the first part of the theorem it suffices to show that, for each of the finitely many points $t\in\Z^{3g+n-3}\cap M$ and $P\in\Pcal$, the set of nondegenerate integral points of $X_k(\Z)$ in the fiber $X_{k,t}^P=\tr_P^{-1}(t)$ decomposes into finitely many $\Gamma_P$-orbits. Now, it follows from \cite[Theorem 3.2]{whang4} that, if $X_{k,t}^P$ contains a nondegenerate integral point, then the fiber $X_{k,t}^P$ must be \emph{perfect} in the sense of \cite[Definition 3.1]{whang4}. It then follows by \cite[Corollary 4.4]{whang4} that $|\Gamma_P\backslash(X_k(\Z)\cap X_{k,t}^P)|<\infty$ which implies the desired result. Finally, the second assertion of the theorem follows from \cite[Theorem 1.1]{whang4}.
\end{proof}

It turns out that the fibers of the morphisms $\tr_P$ can be complicated in general, and descent along the fibers by Dehn twists can fail. Therefore, to give a full proof of Theorem \ref{theorem4}, we will modify our strategy to give an inductive argument on $(g,n)$; intuitively, our argument takes care of the action/angle variables one at a time.

We now describe our general strategy and the organization of this section. In Section \ref{sect:4.1}, we establish Theorem \ref{theorem4} in the cases $(g,n)=(1,1)$ and $(0,4)$ using a classical elementary argument, essentially due to Markoff. These will form the base case of our induction. In Section \ref{sect:4.2}, we establish two preliminary observations (Lemma \ref{compact} and Lemma \ref{curvelem}) to be used in the inductive step of our proof; we motivate these results by the following informal discussion. Let $K$ and $A$ be as in Theorem \ref{theorem4}, and suppose $\rho\in X_K(\Sigma,A)$ is a point which we would like to show to be $\Gamma(\Sigma)$-equivalent to a point in a fixed compact subset $L\subset X(\Sigma,\C)$. By Theorem \ref{theorem2}, there is an essential curve $a\subset\Sigma$ such that $|\tr\rho(a)|$ is bounded. The inductive hypothesis would grant us that, up to $\Gamma(\Sigma|a)$-action, we may assume that the image of $\rho$ under the restriction morphism
$$(-)|_{\Sigma|a}: X(\Sigma)\to X(\Sigma|a)$$
belongs to a compact subset of $X(\Sigma|a)$. At this stage, one wants the fiber $V$ of the above morphism containing $\rho$ to be a $1$-dimensional algebraic torus (and the group $\langle\tau_a\rangle$ generated by Dehn twist along $a$ to act cocompactly on $V(\C)$). However, it is possible for the dimension of $V$ to exceed $1$, if the restriction of $\rho$ is reducible on (a ``nontrivial'' component of) $\Sigma|a$. To avoid this, we show in Lemma \ref{compact} that one can choose the curve $a\subset\Sigma$ to satisfy favorable properties, bypassing the reducibility problem. In addition, to facilitate the study of Dehn twists $\langle\tau_a\rangle$ along fibers of $(-)|_{\Sigma|a}$, we prove (Lemma \ref{compact}) that certain bounded subsets on the character variety can be ``lifted'' to bounded subsets on the representation variety, where we may use the description of lifts of Dehn twists given in Section \ref{sect:2.2.3}. Having established these in Section \ref{sect:4.2}, in Section \ref{sect:4.3} we establish the general case of Theorem \ref{theorem4} by induction.

\subsection{Markoff descent}  \label{sect:4.1}
In this subsection, we establish Theorem \ref{theorem4} in the cases $(g,n)=(1,1)$ and $(0,4)$. Given $k=(k_1,\dots,k_n)\in\A^n(\C)$, we shall write
$$\Height(k)=\max\{1,|k_1|,\dots,|k_n|\}.$$
We remark that Lemmas \ref{lem11} and \ref{lem04} below give an elementary proof of Theorem \ref{theorem2} in the case where $\Sigma$ is a surface of type $(1,1)$ or $(0,4)$. The explicit form of the bounds obtained therein is not important so much as the existence of such bounds. Nevertheless, we remark in passing that these explicit bounds, together with quantitative refinements of Lemmas \ref{lem11b} and \ref{lem04b}, can be used together to give a different, more elementary proof of Corollary \ref{pantsco} for surfaces of low genus.

\subsubsection{Surfaces of type $(1,1)$} \label{sect:4.1.1}
Let $\Sigma$ be a surface of type $(1,1)$. By Section \ref{sect:2.3.1}, we have an identification of the moduli space $X_k=X_k(\Sigma)$ with the affine cubic algebraic surface in $\A_{x,y,z}^3$ given by the equation
$$x^2+y^2+z^2-xyz-2=k.$$
The mapping class group $\Gamma=\Gamma(\Sigma)$ acts on $X_k$ via polynomial transformations. Up to finite index, it coincides with the group $\Gamma'$ of automorphisms of $X_k$ generated by the transpositions and even sign changes of coordinates as well as the Vieta involutions of the form $(x,y,z)\mapsto(x,y,xy-z)$.

The mapping class group dynamics on $X_k(\R)$ was analyzed in detail by Goldman \cite{goldman4}, and the work of Ghosh-Sarnak \cite{gs} establishes a remarkable exact fundamental set for the action of $\Gamma'$ on the integral points $X_k(\Z)$ for admissible $k$. We emphasize that our focus is on establishing descent for the complex points; cf.~Silverman \cite{silverman}. Lemmas \ref{lem11} and \ref{lem11b} below establish Theorems \ref{theorem2} and \ref{theorem4} for $(g,n)=(1,1)$ by an elementary method.

\begin{lemma}
\label{lem11}
Let $\Sigma$ be a surface of type $(1,1)$. There is a constant $C>0$ independent of $k\in\C$ such that, given any $\rho\in X_k(\Sigma,\C)$, there exists some $\gamma\in\Gamma$ such that $\gamma^*\rho=(x,y,z)$ satisfies
$$\min\{|x|,|y|,|z|\}\leq C\cdot \Height(k)^{1/3}.$$
\end{lemma}

\begin{proof}
We notice that it suffices to prove the lemma with $\Gamma$ replaced by $\Gamma'$. Suppose first that $\rho=(x,y,z)$ satisfies
\begin{align*}|x|\leq|y|\leq|z|\leq|xy-z|.\tag{$*$}
\end{align*}
If $|x|\leq 8$ then we are done, so assume otherwise. Note that we have
$$|z|^2\leq|z(xy-z)|=|x^2+y^2-2-k|\leq 2|y|^2+2+|k|.$$
Suppose first that $|xy|\leq2|z|$. We then have
$$\frac{|xy|^2}{4}\leq|z|^2\leq|x^2+y^2-2-k|\leq2|y|^2+2+|k|.$$
Dividing both sides by $|y|^2$, we obtain
$$\frac{|x|^2}{4}\leq2+\frac{2+|k|}{|y|^2}\leq2+\frac{2+|k|}{|x|^2}\leq2\max\left\{2,\frac{2+|k|}{|x|^2}\right\}$$
and therefore $|x|^4\leq8(2+|k|)$, as desired. Suppose next that $|xy|\geq2|z|$. We have
$$\frac{|z||xy|}{2}\leq|z(xy-z)|\leq2|y|^2+2+|k|.$$
Dividing both sides by $|yz|$ and arguing as above, we obtain $|x|^3\leq 4(2+|k|)$, as desired. It therefore remains to consider the case where no $\gamma^*\rho$ with $\gamma\in\Gamma'$ satisfies the analogue of $(*)$. Assuming $|x|\leq|y|\leq|z|$ without loss of generality, we must have $|xy-z|\leq|y|$, since otherwise we may replace $(x,y,z)$ by $(x,y,z')$, $z'=xy-z$, satisfying an analogue of $(*)$. If $|x|\leq 12$ then we are done, so assume otherwise. Suppose first that $2|y|\geq|z|$. We then have
$$|xz^2|/2\leq|xyz|=|x^2+y^2+z^2-2-k|\leq 3|z|^2+2+|k|.$$
Dividing both sides by $|z|^2$ and arguing as before, we find that $|x|^3\leq4(2+|k|)$. Thus, assume that $2|y|\leq|z|$. We then replace $(x,y,z)$ by $(x,xy-z,y)$ and repeat the above argument. Since
$$2\max\{|x|,|y|,|xy-z|\}=2|y|\leq|z|=\max\{|x|,|y|,|z|\},$$
this cannot continue indefinitely without reaching our desired result.
\end{proof}

\begin{lemma}
\label{lem11b}
Let $\Sigma$ be a surface of type $(1,1)$. Let $K\subset\C$ be a compact set. Let $A$ be a set such that
\begin{enumerate}
	\item[\textup{(1)}] $A\subset\R$ and $\dist(A,\{\pm2\})>0$, or
	\item[\textup{(2)}] $A\subset\C$ and $\dist(A,[-2,2])>0$.
\end{enumerate}
There is a constant $C=C(K,A)>0$ such that, for any $\rho\in X_K(\Sigma,A)$, there exists $\gamma\in\Gamma$ so that $\gamma^*\rho=(x,y,z)$ satisfies $\max\{|x|,|y|,|z|\}\leq C$.
\end{lemma}

\begin{proof}
We notice that it suffices to prove the lemma with $\Gamma$ replaced by $\Gamma'$. Fix $\delta>0$ such that
$$|\lambda+\lambda^{-1}-x|>\delta$$
for every $x\in A$ and $\lambda\in\K$ with $1\leq|\lambda|\leq \sqrt{1+\delta}$, where $\K=\R$ if $A$ satisfies the first condition in the lemma and $\K=\C$ otherwise. Such a number $\delta$ exists by our assumption on $A$. Let $\rho=(x,y,z)\in X_k(A)$ be any point with $k\in K$. By Lemma \ref{lem11}, we may assume without loss of generality that
$$|x|\leq C_1(1+|k|)^{1/3}$$
for a positive constant $C_1$ independent of $\rho$ and $k$. Let $\epsilon=|x^2-2-k|$. Writing
$$c=\inf\{|y'|:(x,y',z')\sim(x,y,z)\}$$
where $\sim$ indicates equivalence under $\Gamma'$, we may further assume that
$$\min\{|y|,|z|\}\leq (c^2+\epsilon)^{1/2}$$
and that $|y|\leq|z|$. We claim that we can take $|y|^2\leq\epsilon/\delta$ up to $\Gamma'$-equivalence. Indeed, suppose otherwise. We have the following three cases:
\begin{enumerate}
	\item[\textup{(1)}] We have $|xy-z|\leq|y|\leq|z|$. Replacing $(x,y,z)$ by $(x,y',z')=(x,xy-z,y)$, we may thus assume $c\leq|y|\leq|z|\leq (c^2+\epsilon)^{1/2}$ and thus
	$$|z|^2\leq|y|^2+\epsilon\leq|y|^2(1+\delta).$$
	\item[\textup{(2)}] We have $|y|\leq|xy-z|\leq|z|$. Replacing $(x,y,z)$ by $(x,y',z')=(x,y,xy-z)$, we reduce to the third case below.
	\item[\textup{(3)}] We have $|y|\leq|z|\leq|xy-z|$. In particular, we have
	$$|z|^2\leq|z(xy-z)|=|x^2+y^2-2-k|\leq |y|^2+\epsilon<|y|^2(1+\delta).$$
\end{enumerate}
Thus, in all cases, the ratio $\lambda=z/y$ satisfies $1\leq|\lambda|\leq\sqrt{1+\delta}$. Dividing the equation $y^2+z^2-xyz=2+k-x^2$ by $yz$, we have
$$\left|\lambda^{-1}+\lambda-x\right|=\frac{\epsilon}{|yz|}\leq\delta,$$
contradicting the definition of $\delta$. Thus, we must have $|y|\leq\epsilon/\delta$. Finally, considering the above three cases again, we have
$$\max\{|y|,|z|\}\leq\epsilon(1+\delta)/\delta,$$
up to replacing $(x,y,z)$ by $(x,xy-z,y)$ in case $|xy-z|\leq|y|\leq|z|$. Combining this with the bound on $|x|$ in the beginning, we have proved our desired result.
\end{proof}

\subsubsection{Surfaces of type $(0,4)$} \label{sect:4.1.2}
Let $\Sigma$ be a surface of type $(0,4)$. By Section \ref{sect:2.3.2}, we have an identification of the moduli space $X_k=X_k(\Sigma)$ with the affine cubic algebraic surface in $\A_{x,y,z}^3$ given by the equation
$$x^2+y^2+z^2+xyz=ax+by+cz+d$$
with $a,b,c,d$ appropriately determined by $k$. The mapping class group $\Gamma=\Gamma(\Sigma)$ acts on $X_k$ via polynomial transformations. Let $\Gamma'$ be the group of automorphisms of $X_k$ generated by the Vieta involutions
\begin{align*}
&\tau_x^*:(x,y,z)\mapsto(a-yz-x,y,z),\\
&\tau_y^*:(x,y,z)\mapsto(x,b-xz-y,z),\\
&\tau_z^*:(x,y,z)\mapsto(x,y,c-xy-z).
\end{align*}
Two points $\rho,\rho'\in X_k(\C)$ are $\Gamma'$-equivalent if and only if they are $\Gamma$-equivalent or $\rho$ is $\Gamma$-equivalent to all of $\tau_x^*\rho'$, $\tau_y^*\rho'$, and $\tau_z^*\rho'$. The following lemmas prove Theorems \ref{theorem2} and \ref{theorem4} for $(g,n)=(0,4)$.

\begin{lemma}
\label{lem04}
Let $\Sigma$ be a surface of type $(0,4)$. There is a constant $C>0$ independent of $k\in\C^4$ such that, given any $\rho\in X_k(\C)$, there exists some $\gamma\in\Gamma$ such that $\gamma^*\rho=(x,y,z)$ satisfies one of:
\begin{enumerate}
	\item[\textup{(1)}] $\min\{|x|,|y|,|z|\}\leq C$,
	\item[\textup{(2)}] $|yz|\leq C\Height(a)$,
	\item[\textup{(3)}] $|xz|\leq C\Height(b)$,
	\item[\textup{(4)}] $|xy|\leq C\Height(c)$, or
	\item[\textup{(5)}] $|xyz|\leq C\Height(d)$.
\end{enumerate}
\end{lemma}

\begin{proof}
We proceed as in the proof of Lemma \ref{lem11}. Suppose first $\rho=(x,y,z)$ satisfies
\begin{align*}|x|\leq|y|\leq|z|\leq|c-xy-z|.\tag{$*$}
\end{align*}
Note that we have
$$|z|^2\leq|z(c-xy-z)|=|x^2+y^2-ax-by-d|\leq 4\max\{2|y|^2,|ax|,|by|,|d|\}.$$
Assume first that $|xy|\leq2|z|$. We then have
$$\frac{|xyz|}{2}\leq|z|^2\leq4\max\{2|y|^2,|ax|,|by|,|d|\}$$
which gives us the desired result. Assume next that $|xy|\geq2|z|$. If now $|xy|\leq 3|c|$, then we are done, we may assume $|xy|\geq 3|c|$. We then have
$$\frac{|xyz|}{6}\leq|z(c-xy-z)|\leq4\max\{2|y|^2,|ax|,|by|,|d|\}$$
as desired. If $\rho=(x,y,z)$ satisfies an analogue of $(*)$ with a suitable change in the roles of the variables $x,y,z$, then the same argument applies. It therefore remains to consider the case where no $\gamma^*\rho$ with $\gamma\in\Gamma'$ satisfies an analogue of $(*)$. Let us assume that $|x|\leq|y|\leq|z|$; the other cases will follow similarly. We must have
$$|c-xy-z|\leq|y|,$$ since otherwise we may replace $(x,y,z)$ by $(x,y,z')$, $z'=c-xy-z$, satisfying an analogue of $(*)$. Now, if $|x|\leq 8$ then we are done, so assume otherwise. Suppose first that $2|y|\geq|z|$. We then have
\begin{align*}
|xz^2|/2\leq|xyz|&=|x^2+y^2+z^2-ax-by-cz-d|\\
&\leq5\max\{3|z|^2,|ax|,|by|,|cz|,|d|\}
\end{align*}
which gives us the desired result. Thus, assume that $2|y|\leq|z|$. We then replace $(x,y,z)$ by $(x,y,c-xy-z)$ and repeat the above argument. Since
$$2\max\{|x|,|y|,|c-xy-z|\}=2|y|\leq|z|=\max\{|x|,|y|,|z|\},$$
this cannot continue indefinitely without reaching our desired result.
\end{proof}

\begin{lemma}
\label{lem04b}
Let $\Sigma$ be a surface of type $(0,4)$.
Let $K\subset\C^4$ be a compact set. Let $A$ be a set such that:
\begin{enumerate}
	\item[\textup{(1)}] $A\subset\R$ and $\dist(A,\{\pm2\})>0$, or
	\item[\textup{(2)}] $A\subset\C$ and $\dist(A,[-2,2])>0$.
\end{enumerate}
There is a constant $C=C(K,A)>0$ such that, for every $\rho\in X_K(\Sigma,A)$, there exists $\gamma\in\Gamma$ such that $\gamma^*\rho=(x,y,z)$ satisfies $\max\{|x|,|y|,|z|\}\leq C$.
\end{lemma}

\begin{proof}
We notice that it suffices to prove the lemma with $\Gamma$ replaced by $\Gamma'$. We argue as in the proof of Lemma \ref{lem11b}. Fix $\delta>0$ such that
$$|\lambda+\lambda^{-1}-(-x)|>\delta$$
for every $x\in A$ and $\lambda\in\K$ with $1\leq|\lambda|\leq \sqrt{1+\delta}$, where $\K=\R$ if $A$ satisfies the first condition in the lemma and $\K=\C$ otherwise. Such a number $\delta$ exists by our assumption on $A$. Let $\rho=(x,y,z)\in X_k(A)$ be any point with $k\in K$. By Lemma \ref{lem04}, we may assume without loss of generality that
$$\min\{|x|,|y|,|z|\}\leq C_1\Height(k)^{4/3}$$
for a positive constant $C_1$ independent of $\rho$ and $k$. Assume $|x|=\min\{|x|,|y|,|z|\}$; the other cases will follow similarly. Let $\epsilon=|x^2-ax-d|$. Writing
$$c=\inf\{|y'|,|z'|:(x,y',z')\sim(x,y,z)\}$$
where $\sim$ indicates equivalence under $\Gamma'$, we may further assume that
$$\min\{|y|,|z|\}\leq (c^2+\epsilon)^{1/2}.$$
We assume $|y|\leq|z|$; the other cases will follow similarly. We shall first bound the size of $|y|$. If $|y|\leq4|b|/\delta$ or $|y|\leq 4|c|/\delta$ or $|y|^2\leq2\epsilon/\delta$, then we are done, so suppose otherwise. We have the following three cases:
\begin{enumerate}
	\item[\textup{(1)}] We have $|c-xy-z|\leq|y|\leq|z|$. Replacing $(x,y,z)$ by
	$$(x,y,z')=(x,y,c-xy-z)$$
	and switching the roles of $y$ and $z$, we may assume $c\leq|y|\leq|z|\leq (c^2+\epsilon)^{1/2}$ and thus
	$$|z|^2\leq|y|^2+\epsilon\leq|y|^2(1+\delta).$$
	\item[\textup{(2)}] We have $|y|\leq|c-xy-z|\leq|z|$. Replacing $(x,y,z)$ by
	$$(x,y',z')=(x,y,c-xy-z)$$ we reduce to the third case below.
	\item[\textup{(3)}] We have $|y|\leq|z|\leq|c-xy-z|$. In particular, we have
	$$|z|^2\leq|z(c-xy-z)|=|x^2+y^2-ax-by-d|< |y|^2(1+\delta).$$
\end{enumerate}
Thus, in all cases, the ratio $\lambda=z/y$ satisfies $1\leq|\lambda|\leq\sqrt{1+\delta}$. Dividing the equation $y^2+z^2+xyz=ax+by+cz+d-x^2$ by $yz$, we have
$$\left|\lambda^{-1}+\lambda+x\right|=\frac{|by|+|cz|+\epsilon}{|yz|}\leq\delta,$$
contradicting the definition of $\delta$. Thus, we must have $|y|\leq 4\max\{|b|,|c|\}/\delta$ or $|y|\leq2\epsilon/\delta$. Finally, considering the above three cases again, we also obtain a bound on $|z|$ in terms of $|x|$ and $\Height(k)$. Combining this with the bound on $|x|$ in the beginning, we have proved our desired result.
\end{proof}

\subsection{Preliminary observations}  \label{sect:4.2}
We collect two lemmas needed in our proof of Theorem \ref{theorem4}. Let us say that a subset of a complex analytic variety $V$ is \emph{bounded} if its closure in $V$ is compact. For example, a subset $U\subset\SL_2(\C)$ is bounded if and only if the matrix entries of elements in $U$ are uniformly bounded.

\begin{lemma}
\label{compact}
Let $\Sigma$ be a surface of type $(g,n)$ with $n\geq1$. Let $\del\Sigma=c_1\sqcup \dots\sqcup c_n$. Let $(\alpha_1,\dots,\alpha_{2g+n})$ be an optimal sequence of generators for $\pi_1\Sigma$ as in Definition \ref{optimal}, with each loop $\alpha_{2g+i}$ corresponding to the boundary curve $c_{2g+i}$. Consider the set $K=K_1\times K_2\subset X(c_1\sqcup\dots\sqcup c_{n-1},\C)\times X(c_n,\C)=X(\del\Sigma,\C)\simeq\C^n$ where
\begin{enumerate}
	\item[\textup{(1)}] $K_1\subset\C^{n-1}$ is compact, and
	\item[\textup{(2)}] $K_2\subset\C\setminus\{\pm2\}$ is compact.
\end{enumerate}
Suppose $L\subset X_K(\Sigma,\C)$ is a subset bounded in $X(\Sigma,\C)$ such that every $\rho\in L$ is irreducible. There there exists a compact subset $M\subset\Hom(\pi_1\Sigma,\SL_2(\C))$ satisfying the following conditions:
\begin{itemize}
	\item $\pi^{-1}(L)\subseteq M^{\SL_2(\C)}=\{g\rho g^{-1}:\rho\in M,g\in\SL_2(\C)\}$ where $\pi$ is the usual projection $\Hom(\pi_1\Sigma,\SL_2(\C))\to X(\Sigma,\C)$, and
	\item every $\rho\in M$ satisfies 
	$$\rho(\alpha_{2g+n})=\begin{bmatrix}\lambda&0\\ 0 &\lambda^{-1}\end{bmatrix}$$
	for some $\lambda\in\{z\in\C:|z|>1\}\cup\{e^{\pi it}:0<t<1\}$.	
\end{itemize}
\end{lemma}

\begin{proof}
Let $\rho:\pi_1\Sigma\to\SL_2(\C)$ be a representation whose class in $X(\C)$ lies in $L$. This implies in particular that the traces $\tr\rho(\alpha_i)$, $\tr\rho(\alpha_i\alpha_j)$, and $\tr\rho(\alpha_i\alpha_j\alpha_k)$ (formed from the elements of the optimal sequence of generators) are bounded by a constant depending only on $L$. Up to global conjugation, we have
$$\rho(\alpha_{2g+n})=\begin{bmatrix} \lambda&0\\ 0 &\lambda^{-1}\end{bmatrix}$$
with $\lambda\in\{z\in\C:|z|>1\}\cup\{e^{\pi it}:0<t<1\}$, and there is a bounded set $\Lambda\subset\C$ with $\dist(\Lambda,\{\pm1\})>0$, depending only on $K_2$, such that $\lambda\in \Lambda$. Let us write
$$\rho(\alpha_i)=\begin{bmatrix}a_i & b_i\\ c_i & d_i\end{bmatrix}$$
for $i=1,\dots,2g+n-1$. Note that we have
$$\begin{bmatrix}a_i \\ d_i\end{bmatrix}=\frac{1}{\lambda^{-1}-\lambda}\begin{bmatrix}\lambda^{-1} &-1\\-\lambda &1\end{bmatrix}\begin{bmatrix}a_i+d_i\\\lambda a_i+\lambda^{-1}d_i\end{bmatrix}.$$
Since $(\tr\rho(\alpha_i),\tr\rho(\alpha_i\alpha_{2g+n}))=(a_i+d_i,\lambda a_i+\lambda^{-1}d_i)$ is bounded in terms of $L$, the above expression shows that $(a_i,d_i)$ lies in a bounded subset of $\C^2$ depending only on $\Lambda$ and $L$. Now, $\rho$ is irreducible by our hypothesis that its class in $X(\C)$ belongs to $L$, so that there exists at least one $\alpha_i$ such that $c_i\neq0$. Up to global conjugation of $\rho$ by a diagonal matrix in $\SL_2(\C)$, noting that
$$\begin{bmatrix}\mu & 0 \\ 0 & \mu^{-1}\end{bmatrix}\begin{bmatrix}a_i & b_i \\ c_i & d_i\end{bmatrix}\begin{bmatrix}\mu^{-1} & 0 \\ 0 & \mu\end{bmatrix}=\begin{bmatrix}a_i & \mu^2 b_i\\ \mu^{-2}c_i & d_i\end{bmatrix}$$
we can assume that $\max\{|c_i|:1\leq i\leq 2g+n-1\}=1$ and that $c_{i_0}=1$ for some $1\leq i_0\leq 2g+n-1$. The equation $a_{i_0}d_{i_0}-b_{i_0}c_{i_0}=1$ shows that $b_{i_0}$ is bounded in terms of $\Lambda$ and $L$. Next, for $i\neq i_0$, we have
$$\tr\rho(\alpha_{i_0}\alpha_{i})=a_{i_0}a_i+d_{i_0}d_i+b_{i_0}c_{i}+b_ic_{i_0}.$$
Since the left hand side as well as the first three terms on the right hand side are bounded in terms of $\Lambda$ and $L$, it follows that so is $b_i$. This shows that $\rho$ lies in a bounded subset of $\Hom(\pi_1\Sigma,\SL_2(\C))$. Taking $M$ to be the closure of the set of all $\rho\in\Hom(\pi_1\Sigma,\SL_2(\C))$ thus obtained, we see that $M$ is compact and has the requisite properties.
\end{proof}

Let $\Sigma$ be a surface of type $(g,n)$ with $3g+n-3>0$. Let $K\subset\C^n$ be a compact subset, and let $A$ be a set satisfying one of the following:
\begin{enumerate}
	\item[\textup{(1)}] $A\subset\R$ and $\dist(A,\{\pm2\})>0$, or
	\item[\textup{(2)}] $A\subset\C$ and $\dist(A,[-2,2])>0$.
\end{enumerate}
The following lemma is a refinement of Theorem \ref{theorem2} for points in $\rho\in X_K(\Sigma,A)$.

\begin{lemma}
\label{curvelem}
Assume $3g+n-2\geq2$. There is a constant $C=C(K,A)$ such that, for any irreducible local system $\rho\in X_K(A)$, there is an essential curve $a\subset\Sigma$ with $|\tr_a(\rho)|\leq C$ such that one of the following holds.
\begin{enumerate}
	\item[\textup{(1)}] We have $g\geq1$, the curve $a$ is nonseparating, and $\rho|(\Sigma|a)$ is irreducible.
	\item[\textup{(2)}] We have $g=0$, the curve $a$ is separating with $\Sigma|a=\Sigma_1\sqcup\Sigma_2$, where $\rho|\Sigma_1$ is irreducible and $\Sigma_2$ is of type $(0,3)$.
\end{enumerate}
\end{lemma}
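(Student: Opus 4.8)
The plan is to prove the lemma in three stages: (i) extract from Theorem \ref{theorem2} an essential curve of bounded trace; (ii) adjust it to have the required topological type; (iii) upgrade it so that the complementary restriction of $\rho$ is irreducible. Throughout one uses that, for $g\in\SL_2(\C)$ with eigenvalue $\lambda$, $|\lambda|>1$, one has $|\tr g|\le 2\cosh(\tfrac12\length g)$ and $|\lambda|\le\tfrac12(|\tr g|+\sqrt{|\tr g|^2+4})$, so that boundedness of $|\tr g|$ and of $\length g$ are equivalent; in particular Theorem \ref{theorem2} supplies a constant $C_0=C_0(K)$ such that every $\rho\in X_K(A)$ admits an essential curve $a_0$ with $|\tr_{a_0}\rho|\le C_0$. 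Cutting $\Sigma$ along a curve of bounded trace produces subsurfaces whose boundary traces again lie in a bounded subset of $\C$ (depending only on $K$ and the bound), so Theorem \ref{theorem2} may be reapplied to any such piece with $3g'+n'-3>0$.

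For Stage (ii) with $g\ge1$: if $a_0$ is nonseparating we keep it; if it is separating, one component of the cut surface has positive genus, we apply Theorem \ref{theorem2} there, and iterate. At each step the pair (genus, number of boundary curves) of the subsurface carrying the genus strictly decreases in the lexicographic order, so the iteration terminates --- either because the curve found is nonseparating in its subsurface (hence in $\Sigma$), or because one reaches a subsurface of type $(1,1)$, all of whose essential curves are nonseparating. This produces a nonseparating essential curve $a\subset\Sigma$ with $|\tr_a\rho|$ bounded by a constant depending only on $K$ and $(g,n)$. For $g=0$ the iteration lives inside planar subsurfaces: an essential curve there is separating and partitions the boundary, and after the first step one always recurses into the side \emph{not} containing the curve just produced --- so that side consists of original boundary curves of $\Sigma$ together with one new curve --- and the number of original boundary curves on that side strictly decreases until it equals two, giving a curve that cuts off a pair of pants $\Sigma_2$ meeting exactly two of $c_1,\dots,c_n$, still with bounded trace.

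For Stage (iii), suppose $\rho$ is reducible on the large piece. In the nonseparating case I would use the presentation of $\pi_1\Sigma$ as an HNN extension over $\pi_1(\Sigma|a)$ from Section 5.1.1, with stable letter the class $\beta$ of a simple loop meeting $a$ once: a common eigenline $[v]$ of $\rho(\pi_1(\Sigma|a))$ is not $\rho(\beta)$-invariant (else $\rho$ is reducible), and as $\rho(\beta)^{-1}\rho(a)\rho(\beta)$ also fixes $[v]$, the matrix $\rho(a)$ fixes two distinct lines, hence is diagonalizable with eigenvalue $\mu\ne\pm1$ on $[v]$ (using $\tr_a\rho\in A$). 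In the basis $(v,\rho(\beta)v)$ one computes $\tr(\rho(a)^k\rho(\beta))=\mu^{-k}r$ for a constant $r=\tr\rho(\beta)$, so the essential curves $\tau_a^{\,k}(\beta)$ have traces tending to $0$ when $|\mu|>1$; in the complex case $\dist(A,[-2,2])>0$ forbids this, and also forbids $r=0$ (which is $\tr\rho(\beta)=0$, impossible as $\beta$ is essential), so the restriction is automatically irreducible. The separating case is parallel, via the amalgamated-product description of Section 5.1.2 and Lemma \ref{matlem} (to test reducibility on finitely many elements); for $g=0$ one must also know that \emph{some} pants curve has irreducible complement, which for $n\ge5$ follows from a pigeonhole argument on the eigenlines of a (non-central) boundary monodromy, the cases $n\le4$ being the base cases of Section 3.

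The main obstacle is the real case $A\subset\R$, $\dist(A,\{\pm2\})>0$: here $A$ may accumulate near $0$ and meet $(-2,2)$, so $\rho(a)$ need not be loxodromic, the Dehn-twist traces $\mu^{-k}r$ need not leave $A$, and $\rho$ need not be conjugate into $\SL_2(\R)$ or $\SU(2)$ (only the interior traces are forced real). Extracting, uniformly over $X_K(A)$, a bounded-trace curve of the prescribed type whose complement is irreducible in this case is the crux; I expect this to require a more careful selection of the curve within the finite families produced in Stages (i)--(ii) together with the trace estimates accompanying them, rather than the Dehn-twist-to-zero mechanism alone --- and, in genus $0$, the bookkeeping of switching which pair of punctures is cut off while retaining trace control.
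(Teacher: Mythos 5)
Your Stage (iii) argument for the nonseparating case is a genuinely different (and rather elegant) route in the case $\dist(A,[-2,2])>0$: choosing a simple loop $\beta$ meeting $a$ once and writing $\rho(a)=\mathrm{diag}(\mu,\mu^{-1})$ in the basis $(v,\rho(\beta)v)$, the computation $\tr(\rho(a)^k\rho(\beta))=r\mu^{-k}$ is correct, and since $\tau_a^k(\beta)$ is essential for all $k$, having a common eigenline forces some essential curve to have trace arbitrarily close to $0\in[-2,2]$, a contradiction. So in that case the complement is \emph{automatically} irreducible for any nonseparating $a$ with $\tr_a\rho\in A$, and Stages (i)--(ii) alone suffice. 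This is a short-cut the paper does not take.

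However, the proposal has a genuine gap exactly where you flag it: the real case $A\subset\R$, $\dist(A,\{\pm2\})>0$, where $A$ may meet $(-2,2)$ and even accumulate at $0$. This is not a cosmetic loose end. The statement you are proving feeds directly into the proof of Theorem \ref{theorem3}, and the ultimate application (Theorem \ref{theorem1}) is the case $A=\Z$ with $A_*=\Z\setminus\{\pm2\}$, which contains $0$ and $\pm1$, so only condition (1) is available. An argument that only settles condition (2) does not prove the lemma. Your closing paragraph acknowledges the obstruction but does not supply a mechanism; ``more careful selection within the finite families produced in Stages (i)--(ii)'' is not an argument, and the Dehn-twist-to-zero device genuinely fails when $0\in\overline{A}$.

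The paper's proof closes this gap with a different and case-uniform mechanism. Having produced a bounded-trace nonseparating curve $a$ (genus $\ge1$), it embeds $a$ in a one-holed torus $\Sigma'\subset\Sigma$; the boundary of $\Sigma'$ has trace $\lambda^2+\lambda^{-2}$ (bounded), and $\pm2\notin A$ forces $\rho|\Sigma'$ irreducible. Lemma \ref{lem11b} (the $(1,1)$ base case, already proved for both conditions on $A$) puts $\rho|\Sigma'$ in a bounded $(x,y,z)$, giving three bounded-trace nonseparating curves $a_1,a_2,a_3$ in $\Sigma'$, and Lemma \ref{matlem} --- an elementary triangularization argument about pairs of matrices sharing eigenvectors --- shows that if all three complements $\rho|(\Sigma'|a_i)$ were reducible then $\rho|\Sigma'$ itself would be; so one of the $a_i$ works. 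For $g=0$ the role of the one-holed torus is played by an embedded four-holed sphere and Lemma \ref{lem04b}. Nothing here uses discreteness of $A$ away from $[-2,2]$ beyond $\pm2\notin A$, which is why it covers the real case. If you want to keep your descent-based Stage (iii) for condition (2), you would still need an argument of roughly the paper's shape (pass to a small subsurface where the base case applies and use a trace-identity lemma like \ref{matlem}) to handle condition (1); without that, the lemma is not proved.

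Two smaller points. In Stage (ii), for $g=0$ the phrase ``recurse into the side not containing the curve just produced'' is not well-defined on the first step (the new curve lies on both sides), and in general the bookkeeping that guarantees you terminate at a pants meeting two original boundary curves should be made explicit; the paper sidesteps this by producing a four-holed sphere via Lemma \ref{lem04b} and then reading off the three candidate pants curves from its coordinates. And in Stage (iii) the ``pigeonhole on eigenlines'' remark for $g=0$, $n\ge5$ is too vague to assess; note that Lemma \ref{matlem} is precisely the tool the paper uses to make that step precise.
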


\begin{proof}
In our proof below, we shall denote by $C(K,A)$ or $C_i(K,A)$ suitable constants that depend only on $K$ and $A$.

Assume first that $g\geq1$. By Corollary \ref{pantsco}, there is a pants decomposition $P$ of $\Sigma$ such that $|\tr\rho(a)|\leq C_1(K,A)$ for every component $a$ of $P$. At least one curve component of $P$ must be nonseparating; let us pick one and label it $a$. We are done if $\rho|(\Sigma|a)$ is irreducible, so let us assume otherwise. Let $\Sigma'$ be an embedded one holed torus in $\Sigma$ that contains $a$. Let $c$ denote the boundary curve of $\Sigma'$. Since $\tr\rho(c)\neq2$ by our hypothesis that $\pm2\notin A$ and that $\rho$ is irreducible, we see $\rho|\Sigma'$ is irreducible. Writing $\tr\rho(a)=\lambda+\lambda^{-1}$ with $\lambda\in\C$ and $|\lambda|\geq1$, we have
$$|\tr\rho(c)|=|\lambda^2+\lambda^{-2}|\leq C_2(K,A).$$
Here, the equality $\tr\rho(c)=\lambda^2+\lambda^{-2}$ follows from the hypothesis that $\rho|(\Sigma|a)$ is reducible. Using Lemma \ref{lem11b} and the notation therein, we may assume that the restriction $\rho|\Sigma'=(x,y,z)\in X(\Sigma')$ satisfies
$$\max\{|x|,|y|,|z|\}\leq C_3(K,A).$$
Let $(\alpha_1,\alpha_2,\gamma)$ be an optimal sequence of generators for $\pi_1\Sigma'$ (as in Definition \ref{optimal}), so that the essential curves $a=a_1$, $a_2$, and $a_3$ (lying in the free homotopy classes of $\alpha_1$, $\alpha_2$, and $\alpha_1\alpha_2$, respectively) correspond to the variables $x,y,z$ and $\gamma$ is homotopic to a parametrization of $\del\Sigma'$. It suffices to show that $\rho|(\Sigma'|a_i)$ is irreducible for some $i$, since this would imply $\rho|(\Sigma|a_i)$ is irreducible.

So suppose the contrary. Since $\tr\rho(\gamma)\neq\pm2$ by our hypothesis that $\pm2\notin A$, we may assume up to global conjugation that $\rho(\gamma)$ is diagonal and $\rho|\Sigma''$ is upper triangular where $\Sigma''$ is the surface of type $(g-1,n+1)$ such that $\Sigma|c=\Sigma'\sqcup\Sigma''$. Our hypothesis that each $\rho|(\Sigma|a_i)$ is reducible implies that the matrices $\rho(\alpha_1)$, $\rho(\alpha_2)$, and $\rho(\alpha_1\alpha_2)$ must be upper or lower triangular, and in fact they must be simultaneously upper triangular or simultaneously lower triangular. If $\rho(\alpha_1)$, $\rho(\alpha_2)$, and $\rho(\alpha_1\alpha_2)$ are simultaneously upper triangular, or if $\rho|\Sigma''$ is diagonal, then we conclude that $\rho$ is reducible; a contradiction. So let us assume that $\rho(\alpha)$ is upper triangular and non-diagonal for some $\alpha\in\pi_1\Sigma''$, and suppose $\rho(\alpha_1)$ is lower triangular and non-diagonal (the cases where $\rho(\alpha_2)$ or $\rho(\alpha_1\alpha_2)$ is non-diagonal will be similar). But $\rho(\gamma)$ is diagonal, so we contradict the hypothesis that $\rho|(\Sigma|a_1)$ is reducible. This completes the argument for Lemma \ref{curvelem} in the case $g\geq1$.

It remains to consider the case where $g=0$. We begin with the following claim.

\begin{nclaim}
There is an essential curve $b\subset\Sigma$ with $\Sigma|b=\Sigma'\sqcup\Sigma''$, where $\Sigma'$ is of type $(0,4)$, $\Sigma''$ is of type $(0,n-2)$, and $\rho|\Sigma'=(x,y,z)\in X(\Sigma',\C)$ satisfies
$$\max\{|x|,|y|,|z|\}\leq C(K,A)$$
up to $\Gamma(\Sigma')$-action.
\end{nclaim}

\begin{proof}[Proof of claim]
Let us begin with a pants decomposition $P=a_1\sqcup\dots\sqcup a_{3g+n-3}$ of $\Sigma$ such that $|\tr\rho(a_i)|\leq C_1(K,A)$ for every $a_i$ (such $P$ exists by Corollary \ref{pantsco}). Let $V(P)$ be the set of connected components of $\Sigma|P$. Each element of $V(P)$ is a surface of type $(0,3)$ and $\# V(P)=|\chi(\Sigma)|$. Let $G(P)$ be the simple graph with vertex set $V(P)$ where two distinct vertices in $V(P)$ are adjacent (i.e.~joined by an edge) if and only if their images (as components of $\Sigma|P$) in $\Sigma$ overlap, with their intersection being a curve component of $P$. One of two possibilities occur.

\begin{itemize}
	\item [(a)] There is a vertex of degree $1$ in $G(P)$ whose adjacent vertex has degree $2$.
	\item [(b)] For every vertex of degree $1$ in $G(P)$, the adjacent vertex has degree $3$.
\end{itemize}
If (a) occurs, we may choose $b$ to be the curve so that $\Sigma'$ is the union of the image of the components of $V(P)$ corresponding to said vertices. So suppose (b) occurs. In this case, there must be a vertex, say $\Sigma_A$, of degree $3$ in $G(P)$ such that two of its adjacent vertices, say $\Sigma_B$ and $\Sigma_C$, have degree $1$. Let $\Sigma_{AB}$ be the surface of type $(0,4)$ obtained as the union of images of $\Sigma_A$ and $\Sigma_B$ in $\Sigma$. Since the trace of $\rho$ along the curves in $\del\Sigma_{AB}$ are bounded in absolute value in terms of $K$ and $C_1(K,A)$, by Lemma \ref{lem04b} we can find an essential curve $b\subset\Sigma_{AB}$ such that
\begin{enumerate}
	\item $|\tr\rho(b)|\leq C_2(K,A)$, and
	\item $b$ is not $\Gamma(\Sigma_{AB})$-equivalent to curve in $P$ given as the intersection of the images of $\Sigma_A$ and $\Sigma_B$.
\end{enumerate}
See Figure \ref{fig4} for an illustration of the above. Writing $\Sigma|b=\Sigma'\sqcup\Sigma''$ with $\Sigma'$ of type $(0,4)$, and applying Lemma \ref{lem04b} to $\rho|\Sigma'$, we prove the claim.
\begin{figure}[ht]
    \centering
    \includegraphics{./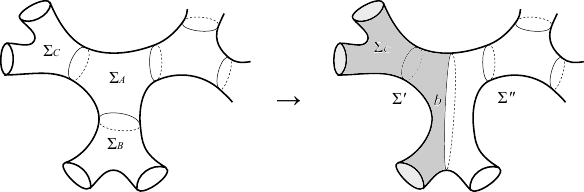}
    \caption{Construction of $\Sigma'$}
    \label{fig4}
\end{figure}
\end{proof}
To complete our proof of Lemma \ref{curvelem}, let $b$, $\Sigma'$, and $\Sigma''$ be as in the claim above. Let $a_1,a_2,a_3\subset\Sigma'$ be the essential curves corresponding to the variables $x,y,z$ in the claim. Writing $\Sigma|a_i=\Sigma_1^{i}\sqcup\Sigma_2^{i}$ with $\Sigma_1^{i}$ of type $(0,n-1)$ and $\Sigma_2^i$ of tye $(0,3)$, it suffices to show that $\rho|\Sigma_1^i$ is irreducible for some $i=1,2,3$.

So suppose toward contradiction that $\rho|\Sigma_1^a$ is reducible for every $i=1,2,3$. This implies in particular that $\rho|\Sigma''$ is reducible. Let $(\gamma_1,\gamma_2,\gamma_3,\gamma_4)$ be an optimal sequence of generators for $\pi_1\Sigma'$ so that $\gamma_1$ is homotopic to a parametrization of $b$ and $\gamma_1\gamma_2$, $\gamma_2\gamma_3$, and $\gamma_1\gamma_3$ are homotopic to parametrizations of $a_1$, $a_2$, and $a_3$ respectively. Up to global conjugation, we may assume that $\rho(\gamma_1)$ is diagonal (note that $\rho(\gamma_1)$ is diagonalizable since $\pm2\notin A$ by hypothesis) and that $\rho|\Sigma''$ is upper triangular.

Now, the hypotheses that $\rho|\Sigma_1^i$ are reducible imply that $\rho(\gamma_2)$, $\rho(\gamma_3)$, and $\rho(\gamma_4)$ are each upper or lower triangular. In fact, since $\gamma_1\gamma_2\gamma_3\gamma_4=1$, the matrices $\rho(\gamma_2)$, $\rho(\gamma_3)$, and $\rho(\gamma_4)$ must be simultaneously upper triangular or simultaneously lower triangular. If the former occurs then $\rho$ is reducible; a contradiction. So let us assume that $\rho(\gamma_2),\rho(\gamma_3)$ and $\rho(\gamma_4)$ are simultaneously lower triangular and one of them is non-diagonal, say $\rho(\gamma_2)$ (the other cases are similar). If $\rho|\Sigma''$ is diagonal, we again obtain a contradiction that $\rho$ is reducible. Thus, it remains to consider the case where $\rho(\alpha)$ is upper triangular and non-diagonal for some $\alpha\in\pi_1\Sigma''$, and $\rho(\gamma_2)$ is lower-triangular and non-diagonal. But $\rho(\gamma_1)$ is diagonal, so we contradict the hypothesis that $\rho|\Sigma_1^2$ is reducible (note that $\rho(\gamma_1)\neq\pm\mathbf1$ since $\pm2\notin A$). We have completed the proof of Lemma \ref{curvelem}.\end{proof}

\subsection{General case}  \label{sect:4.3}

In this subsection, we prove the general case of Theorem \ref{theorem4}. Let $\Sigma$ be a surface of type $(g,n)$ with $3g+n-3>0$. Let $K\subset\C^n$ be a compact subset, and let $A$ be a set satisfying one of the following:
\begin{enumerate}
	\item[\textup{(1)}] $A\subset\R$ and $\dist(A,\{\pm2\})>0$, or
	\item[\textup{(2)}] $A\subset\C$ and $\dist(A,[-2,2])>0$.
\end{enumerate}

\begin{thm}{1.4}
There is a compact set $L\subseteq X_K(\C)$ such that $X_K(A)\subseteq\Gamma\cdot L$.
\end{thm}

We will prove the theorem by induction on $3g+n-3\geq1$. The base case $3g+n-3=1$ where $(g,n)=(1,1)$ or $(0,4)$ was established in Section \ref{sect:4.1}. So let us assume $3g+n-3\geq2$. Below, we shall establish the inductive steps separately for the cases $g\geq1$ and $g=0$, in Sections \ref{sect:4.4.1} and \ref{sect:4.4.2} respectively. Afterwards, we conclude the proof of Theorem \ref{theorem4} in Section \ref{sect:4.4.3}.

\subsubsection{Case $g\geq1$}\label{sect:4.4.1}
We begin by noting that any point of $X_K(\Sigma,A)$ is automatically irreducible, since we have $2\notin A$ and $3g+n-3\geq2$ by assumption. Indeed, if a representation $\rho\in X(\Sigma,\C)$ is reducible then the boundary curve of any embedded one-holed torus in $\Sigma$ must have trace $2$ under $\rho$.

Let $a\subset\Sigma$ be a nonseparating curve. By Lemma \ref{curvelem}.(1) there exist compact subsets $C_\ell\subset\C\setminus[-2,2]$ and $C_e\subset(-2,2)$ (with subscripts $\ell$ and $e$ for \emph{loxodromic} and \emph{elliptic}) so that, for any $\rho\in X_K(\Sigma,A)$, there is $\gamma\in\Gamma(\Sigma)$ such that
\begin{itemize}
	\item[(a)] $\tr_a(\gamma^*\rho)\in C_\ell\sqcup C_e$ and
	\item[(b)] $(\gamma^*\rho)|(\Sigma|a)$ is irreducible.
\end{itemize}
Let us write $\del(\Sigma|a)=\del\Sigma\sqcup a_1\sqcup a_2$ where $a_1$ and $a_2$ are the boundary curves of $\del(\Sigma|a)$ corresponding to $a$, so that $X(\del(\Sigma|a),\C)\simeq X(\del\Sigma,\C)\times\C^2$ (see Section \ref{sect:2.2.2} for our notation). Let us write
\begin{align*}
&K_\ell=K\times\{(z,z):z\in C_\ell\}\subset X(\del(\Sigma|a),\C),\quad\text{and}\\
&K_e=K\times\{(z,z):z\in C_e\}\subset X(\del(\Sigma|a),\C).
\end{align*}
Thus, (a) implies $(\gamma^*\rho)|(\Sigma|a)\in X_{K_\ell}(\Sigma|a,A)\sqcup X_{K_e}(\Sigma|a,A)$. Now, $\Sigma|a$ is a surface of genus $g-1$ with $n+2$ boundary curves with $3(g-1)+(n+2)-3<3g+n-3$. By the inductive hypothesis, there exist compact sets
$$L_\ell\subseteq X_{K_\ell}(\Sigma|a,\C)\quad\text{and}\quad L_e\subseteq X_{K_e}(\Sigma|a,\C)$$
such that
$$X_{K_\ell}(\Sigma|a,A)\subseteq\Gamma(\Sigma|a)\cdot L_\ell\quad\text{and}\quad X_{K_e}(\Sigma|a,A)\subseteq\Gamma(\Sigma|a)\cdot L_e.$$
Let us define
\begin{align*}
&U=\{\rho\in X_K(\Sigma,A):\rho|(\Sigma|a)\in L_\ell\text{ irreducible}\},\quad\text{and}\\
&V=\{\rho\in X_K(\Sigma,A):\rho|(\Sigma|a)\in L_e\text{ irreducible}\}.
\end{align*}
Note that we have $X_K(\Sigma,A)\subseteq \Gamma(\Sigma)\cdot(U\cup V)$. To complete the inductive step for $g\geq1$ in our proof, it therefore suffices to prove the following.
\begin{nclaim}
The following holds.
\begin{enumerate}
	\item[(A)] There is a compact set $L\subset X_K(\Sigma,\C)$ with $U\subseteq\langle\tau_a\rangle\cdot L$, and
	\item[(B)] The set $V$ is bounded in $X(\Sigma,\R)$.
\end{enumerate}
\end{nclaim}

\begin{proof}(A) We will argue by passing to the representation variety $\Hom(\pi_1\Sigma,\SL_2(\C))$ and using the description of the lift of Dehn twist from Section \ref{sect:2.2.3}. We denote the standard projections from representation varieties to character varieties by $\pi$. Following the notation of the first part of Section \ref{sect:2.2.3}, let $\alpha$ be a loop parametrizing $a$, let $\beta$ be a simple loop on $\Sigma$ intersecting $\alpha$ exactly once, let $\alpha_1$ be a lift of $\alpha$ to $\Sigma|a$ parametrizing $a_1$, and let $\alpha_2$ be a loop on $\Sigma|a$ around $\alpha_2$ so that $\alpha_2=\beta^{-1}\alpha_1\beta$. We identify $\Hom(\pi_1\Sigma,\SL_2(\C))$ with its image under the closed immersion of complex varieties
$$\Hom(\pi_1\Sigma,\SL_2(\C))\subset\Hom(\pi_1(\Sigma|a),\SL_2(\C))\times\SL_2(\C),$$
given by $\rho\mapsto(\rho',B)=(\rho|_{\pi_1(\Sigma|a)},\rho(\beta))$, considered in the first part of Section \ref{sect:2.2.3}. Applying Lemma \ref{compact}, we can find a compact subset
$$M\subseteq\pi^{-1}(X_{K_\ell}(\Sigma|a,\C))$$
in the representation variety $\Hom(\pi_1(\Sigma|a),\SL_2(\C))$ such that
\begin{itemize}
	\item $\pi^{-1}(U)\subseteq (M\times\SL_2(\C))^{\SL_2(\C)}$, and
	\item every $\rho'\in M$ satisfies
$$\rho'(\alpha_1)=\begin{bmatrix}\lambda &0\\ 0 &\lambda^{-1}\end{bmatrix}\quad\text{for some $\lambda\in\C^*$ with $|\lambda|>1$}.$$
\end{itemize}
Here, $(M\times\SL_2(\C))^{\SL_2(\C)}$ indicates the union of the $\SL_2(\C)$-conjugation orbit of $M\times\SL_2(\C)$.
Let us consider an element
$$\rho=(\rho',B)\in\Hom(\pi_1\Sigma,\SL_2(\C))\cap (M\times\SL_2(\C)).$$
Writing
$$\rho'(\alpha_1)=\begin{bmatrix}\lambda & 0\\ 0 & \lambda^{-1}\end{bmatrix}\quad\text{and}\quad B=\begin{bmatrix}a&b\\c&d\end{bmatrix}\in\SL_2(\C),$$
we have
$$\rho'(\alpha_2)=B^{-1}\rho'(\alpha_1)B=\begin{bmatrix}\lambda+bc(\lambda-\lambda^{-1}) & bd(\lambda-\lambda^{-1})\\ -ac(\lambda-\lambda^{-1}) &\lambda^{-1}-bc(\lambda-\lambda^{-1})\end{bmatrix}.$$
Since $\rho'\in M$, the numbers $bc$, $bd$, $ac$, and $ad=bc+1$ belong to a compact subset of $\C$ depending only on $M$. Note that we cannot have $b=c=0$, since otherwise the boundary of the one-holed torus subsurface $\Sigma'\subset\Sigma$ whose fundamental group is generated by $\alpha_1$ and $\beta$ will have trace $2$, while $2\notin A$ by hypothesis. For each integer $k$, we have
$$\rho(\alpha_1)^kB=\begin{bmatrix}\lambda^k & 0\\ 0 & \lambda^{-k}\end{bmatrix}\begin{bmatrix}a & b\\ c &d\end{bmatrix}=\begin{bmatrix}a\lambda^k & b\lambda^k\\ c\lambda^{-k} &d\lambda^{-k}\end{bmatrix}=:\begin{bmatrix}a' & b'\\ c' & d'\end{bmatrix}.$$
We shall show in cases below that, for suitable $k$, the entries $a',b',c',d'$ of $\rho(\alpha_1)^k B$ will be bounded only in terms of $A$ and $M$.
\begin{enumerate}
	\item Suppose that $a=d=0$. Then since $bc=b'c'$ is bounded we see that for suitable $k$ the matrix $\rho(\alpha_1)^kB$ has entries lying in a compact subset of $\C$ depending only on $A$ and $M$.
	\item Suppose that $a$ or $d$ is nonzero. Then we must have $ab\neq0$ or $cd\neq0$ since $B$ is invertible and we cannot have $b=c=0$. We consider the case $ab\neq0$; the case $cd\neq0$ will follow by a similar argument. If $ab\neq0$, for suitable $k$ the matrix $\rho(\alpha_1)^kB$ above has entries such that $\max\{|a'|,|b'|\}$ belongs to a compact subset of $\C^\times$ depending only on $A$ and $M$. Also, $c'$ and $d'$ are bounded in terms of $K$ and $M$ by the boundedness of the products $ac=a'c'$, $ad=a'd'$, $bc=b'c'$, and $bd=b'd'$ remarked above.
\end{enumerate}
Using the description of the lift of Dehn twist in Section \ref{sect:2.2.3}, we conclude that there is a compact set $N\subset\SL_2(\C)$ depending only on $A$ and $M$ such that every
$$\rho=(\rho',B)\in\Hom(\pi_1\Sigma,\SL_2(\C))\cap (M\times\SL_2(\C))$$
is $\langle\tau_\alpha\rangle$-equivalent to a point of $M\times N$.
Writing
$$L=\pi(\Hom(\pi_1\Sigma,\SL_2(\C))\cap(M\times N))$$
we see that $U\subseteq\langle\tau_a\rangle\cdot L$
which gives the desired result.

(B) We may assume $A\subset\R$ with $\dist(A,\{\pm2\})>0$ since otherwise $V$ is empty by our hypothsis on $A$. Let $\alpha_1,\dots,\alpha_{2g+n}$ be an optimal sequence of generators for $\pi_1\Sigma$ (as in Definition \ref{optimal}) so that $\alpha_1$ parametrizes $a$. By Procesi's theorem (Fact \ref{fact}), the coordinate ring of $X(\Sigma)$ is generated by
$\tr_{\alpha_{i_1}\cdots\alpha_{i_k}}$
where $i_1<\dots<i_k$ are integers in $\{2,\dots,2g+n,1\}$ and $k\leq 3$. The values of these functions on $W$ are all bounded in terms of $A$, $K$, and $C_e$, except possibly for those of the form
$$\tr_{\alpha_2\alpha_1},\quad\tr_{\alpha_{2}\alpha_r},\quad\tr_{\alpha_2\alpha_r\alpha_1},\quad\text{and}\quad\tr_{\alpha_{2}\alpha_{s}\alpha_t}$$
for $3\leq r\leq 2g+n$ and $3\leq s<t\leq 2g+n$. We shall show that each of $\tr_{\alpha_2\alpha_r}$ above is bounded on $V$ in terms of $K$, $L_e$, and $A$; the same argument will apply equally to the functions of the form $\tr_{\alpha_2\alpha_s\alpha_t}$ above. Note that the simple loops (homotopic to) $\alpha_1$ and $\alpha_2\alpha_r$ together generate the fundamental group of an embedded subsurface $\Sigma'$ of type $(1,1)$ in $\Sigma$, with $\del\Sigma'\subset\Sigma|a$. Now, we have
$$\tr_{\alpha_1}^2+\tr_{\alpha_2\alpha_r}^2+\tr_{\alpha_1\alpha_2\alpha_r}^2-\tr_{\alpha_1}\tr_{\alpha_2\alpha_r}\tr_{\alpha_1\alpha_2\alpha_r}-2=\tr_{\del\Sigma'}.$$
Therefore, for each $\rho\in V$, the number $\tr\rho(\alpha_2\alpha_r)$ is the $X$- or $Y$-coordinate of a point $(X,Y)\in\R^2$ on an ellipse given by
$$X^2+Y^2-\tr\rho(\alpha_1)XY=\tr\rho(\del\Sigma')+2-\tr^2\rho(\alpha_1).$$
Thus, by elementary geometry, we have
$$|\tr\rho(\alpha_2\alpha_r)|\leq\sqrt{\frac{|\tr\rho(\del\Sigma')+2-\tr^2\rho(\alpha_1)||\tr\rho(\alpha_1)|^2}{|\tr^2\rho(\alpha_1)-4|}}.$$
The right hand side is bounded purely in terms of $K$, $C_e$, and $A$, noting in particular that the hypothesis $\dist(A,\{\pm2\})>0$ gives a lower bound on the denominator $|\tr^2\rho(\alpha_1)-4|$. This shows that the functions $\tr_{\alpha_2\alpha_r}$ (and similarly $\tr_{\alpha_2\alpha_s\alpha_t}$) above are bounded on $V$ in terms of $K$, $C_e$, and $A$. Thus $V$ is bounded, as desired.\end{proof}

\subsubsection{Case $g=0$} \label{sect:4.4.2}
The proof of the inductive step in this case is similar to the case $g\geq1$ above, with differences arising from the fact that we cut $\Sigma$ along separating curves. Since $\Sigma$ has genus zero, the set of reducible elements in $X_K(\Sigma,A)$ is bounded in $X(\Sigma,\C)$. To prove the claim in Theorem \ref{theorem4}, it thus suffices to consider only the irreducible elements in $X_K(\Sigma,A)$.

Considered up to mapping class group equivalence and isotopy, there are at most finitely many separating curves $a\subset\Sigma$ such that $\Sigma|a=\Sigma_1^a\sqcup\Sigma_2^a$ with $\Sigma_1^a$ of type $(0,n-1)$ and $\Sigma_2^a$ of type $(0,3)$. Let $\Scal$ be a complete set of representatives for these equivalence classes. Applying Lemma \ref{curvelem}.(2), we choose for each $a\in\Scal$ compact subsets $C_\ell^a\subset\C\setminus[-2,2]$ and $C_e^a\subset(-2,2)$ so that, for any irreducible $\rho\in X_K(\Sigma,A)$, there is some $\gamma\in\Gamma(\Sigma)$ and $a\in\Scal$ such that
\begin{itemize}
	\item[(a)] $\tr_a(\gamma^*\rho)\in C_\ell^a\sqcup C_e^a$ and
	\item[(b)] $(\gamma^*\rho)|\Sigma_1^a$ is irreducible.
\end{itemize}
For each $a\in\Scal$, let us write $\del\Sigma=B_1^a\sqcup B_2^a$ where $B_i^a=\Sigma_i^a\cap\del\Sigma$, so that we can write $\Sigma_i^a=B_i^a\sqcup a_i$ where $a_i$ is the boundary curve on $\Sigma_i^a$ corresponding to $a$. This means that we have $X(\del\Sigma_i^a)\simeq X(B_i^a)\times\C$ (see Section \ref{sect:2.2.2} for our notation). Let us define compact sets
$$K_{i,s}^a=\{\rho|B_i^a:\rho\in K\}\times C_s^a$$
for $i\in\{1,2\}$ and $s\in\{\ell,e\}$. Point (a) above implies that, for each $i=1,2$,
$$(\gamma^*\rho)|\Sigma_i^a\in X_{K_{i,\ell}^a}(\Sigma_i^a,A)\sqcup X_{K_{i,e}^a}(\Sigma_i^a,A).$$ By inductive hypothesis, for each $a\in\Scal$ there exist compact sets
$$L_{1,\ell}^a\subseteq X_{K_{1,\ell}^a}(\Sigma_1^a,\C)\quad\text{and}\quad L_{1,e}^a\subseteq X_{K_{1,e}^a}(\Sigma_1^a,\C)$$
such that
$$X_{K_{1,\ell}^a}(\Sigma_1^a,A)\subseteq\Gamma(\Sigma_1^a)\cdot L_{1,\ell}^a\quad\text{and}\quad X_{K_{1,e}^a}(\Sigma_1^a,A)\subseteq\Gamma(\Sigma_1^a)\cdot L_{1,e}^a.$$
Let us define
\begin{align*}
&U^a=\{\rho\in X_K(\Sigma,A):\rho|\Sigma_1^a\in L_{1,\ell}^a\text{ irreducible}\},\quad\text{and}\\
&V^a=\{\rho\in X_K(\Sigma,A):\rho|\Sigma_1^a\in L_{1,e}^a\text{ irreducible}\}.
\end{align*}
Note that $X_K(\Sigma,A)\subseteq\Gamma(\Sigma)\cdot(\bigcup_{a\in\Scal}U_a\cup V_a)$. To complete the inductive step for $g=0$ in our proof, it therefore suffices to prove the following.

\begin{nclaim}
Fix $a\in\Scal$. The following holds.
\begin{enumerate}
	\item [(A)] There is a compact set $L^a\subset X_K(\Sigma,\C)$ with $U^a\subseteq\langle\tau_a\rangle\cdot L^a$.
	\item [(B)] The set $V^a$ is bounded in $X(\Sigma,\R)$.
\end{enumerate}
\end{nclaim}

\begin{proof}(A)
We shall pass to the representation variety $\Hom(\pi_1\Sigma,\SL_2(\C))$, and use the description of the lift of Dehn twist from Section \ref{sect:2.2.3}. As in the second part of Section \ref{sect:2.2.3}, let $\alpha$ be a loop parametrizing $a$, and let $\alpha_i$ be a lift of $\alpha$ to $\Sigma_i^a$ parametrizing $\alpha_i$. Let us identify $\Hom(\pi_1\Sigma,\SL_2(\C))$ with its image under the closed immersion of complex varieties
$$\Hom(\pi_1\Sigma,\SL_2(\C))\subset\Hom(\pi_1\Sigma_1^a,\SL_2(\C))\times\Hom(\pi_1\Sigma_2^a,\SL_2(\C)),$$
given by $\rho\mapsto(\rho_1,\rho_2)=(\rho|_{\pi_1\Sigma_1^a},\rho|_{\pi_1\Sigma_2^a})$, considered in the second part of Section \ref{sect:2.2.3}. Applying Lemma \ref{compact}, we can find a compact subset
$$M^a\subseteq\pi^{-1}(X_{K_{1,\ell}^a}(\Sigma_1^a,\C))\subseteq\Hom(\pi_1\Sigma_1^a,\SL_2(\C))$$
in the representation variety of $\Sigma_1^a$ such that
\begin{itemize}
	\item $\pi^{-1}(U^a)\subseteq(M^a\times\pi^{-1}(X_{K_{2,\ell}^a}(\Sigma_2^a,\C)))^{\SL_2(\C)}$, and
	\item every $\rho_1\in M^a$ satisfies
$$\rho_1(\alpha_1)=\begin{bmatrix}\lambda &0\\ 0 &\lambda^{-1}\end{bmatrix}\quad\text{for some $\lambda\in\C^*$ with $|\lambda|>1$}.$$
\end{itemize}
Let us consider an element
$$\rho=(\rho_1,\rho_2)\in\Hom(\pi_1\Sigma,\SL_2(\C))\cap(M^a\times \pi^{-1}(X_{K_{2,\ell}^a}(\Sigma,\C)).$$
Let $\alpha_2=\gamma_1,\gamma_2,\gamma_3$ be an optimal sequence of generators for $\pi_1\Sigma_2^a$. Let us write
$$\rho(\gamma_1)=\begin{bmatrix}\lambda & 0 \\ 0 &\lambda^{-1}\end{bmatrix}\quad\text{and}\quad
\rho(\gamma_2)=\begin{bmatrix}a & b\\ c & d\end{bmatrix}.$$
Note that we have
$$\begin{bmatrix}a \\ d\end{bmatrix}=\frac{1}{\lambda^{-1}-\lambda}\begin{bmatrix}\lambda^{-1} &-1\\-\lambda &1\end{bmatrix}\begin{bmatrix}a+d\\\lambda a+\lambda^{-1}d\end{bmatrix}.$$
Since $(\tr\rho(\gamma_2),\tr\rho(\gamma_3))=(a+d,\lambda a+\lambda^{-1}d)$ is bounded in terms of $K$, the above expression shows that $(a,d)$ lies in a compact subset of $\C^2$ depending only on $K_{2,\ell}^a$ and $A$. In addition, $bc=ad-1$ is bounded in terms of $K_{2,\ell}^a$ and $A$. Now, note that
$$\rho(\gamma_1)^k\rho(\gamma_2)\rho(\gamma_1)^{-k}=\begin{bmatrix} a & b' \\ c' & d\end{bmatrix}=\begin{bmatrix} a & b\lambda^{2k} \\ c\lambda^{-2k} & d\end{bmatrix}$$
for any integer $k$. Thus, for suitable $k$, we see that $c'$ and $d'$ are bounded in terms of $K_{2,\ell}^a$ and $A$. Using the description of the lift of Dehn twist in Section \ref{sect:2.2.3}, we conclude that there is a compact set $N^a\subset\pi^{-1}(X_{K_{2,\ell}^a}(\Sigma,\C)$ depending only on $K_{2,\ell}^a$ and $A$ such that every
$$\rho=(\rho_1,\rho_2)\in\Hom(\pi_1\Sigma,\SL_2(\C))\cap(M^a\times \pi^{-1}(X_{K_{2,\ell}^a}(\Sigma,\C))$$
is $\langle\tau_\alpha\rangle$-equivalent to a point of $M^a\times N^a$. Writing
$$L^a=\pi(\Hom(\pi_1\Sigma,\SL_2(\C))\cap (M^a\times N^a))$$
we see that $U^a\subseteq\langle\tau_a\rangle\cdot L^a$ which gives the desired result.

(B) We may assume $A\subset\R$ with $\dist(A,\{\pm2\})>0$ since otherwise $V$ must be empty by our hypothsis on $A$. Let $\alpha_1,\dots,\alpha_{n}$ be an optimal sequence of generators for $\pi_1\Sigma$ so that $a$ lies in the free homotopy class of $\alpha_1\alpha_2$. By Procesi's theorem (Fact \ref{fact}), the coordinate ring of $X(\Sigma)$ is generated by
$\tr_{\alpha_{i_1}\cdots\alpha_{i_k}}$
where $i_1<\dots<i_k$ are integers in $\{2,\dots,n\}$ and $k\leq 3$. The values of these functions on $W$ are all bounded in terms of $K$, $L_{1,e}^a$, and $A$, except possibly for those of the form
$$\tr_{\alpha_{2}\alpha_r}\quad\text{and}\quad\tr_{\alpha_{2}\alpha_{s}\alpha_t}$$
for $3\leq r\leq n$ and $3\leq s<t\leq n$. Now, we shall show that each of $\tr_{\alpha_2\alpha_r}$ above is bounded on $W$ in terms of $K$, $L_{1,e}^a$, and $A$; a similar argument will apply equally to the functions of the form $\tr_{\alpha_2\alpha_s\alpha_t}$ above. For simplicity, up to choosing a new optimal sequence of generators we may assume without loss of generality that $r=3$. Consider the subsurface $\Sigma'$ of type $(0,4)$ in $\Sigma$ whose fundamental group is generated by $\alpha_1,\alpha_2,\alpha_3$. For each $\rho\in V$, the number $\tr\rho(\alpha_2\alpha_r)$ is a coordinate of a point $(X,Y)$ on a (possibly degenerate) ellipse given by
$$X^2+Y^2+\tr\rho(a) XY+q_1X+q_2Y=q_3$$
for some $q_i\in\R$ which are bounded in terms of $A, K_{1,e}^a$, and $K$. (This is seen by considering the equation for the relative character variety of $X(\Sigma')$ seen in Section \ref{sect:2.3.2}.) Moreover, $\tr\rho(a)$ is bounded in terms of $A$ and $K_{1,e}^a$ away from $\{\pm2\}$. As before, by elementary geometry, we deduce that $\tr\rho(\alpha_2\alpha_3)$ is bounded in terms of $A,K$. This shows that the functions $\tr_{\alpha_2\alpha_r}$ (and similarly $\tr_{\alpha_2\alpha_s\alpha_t}$) above are bounded on $V$ in terms of $K$, and $A$. This shows that $V$ is bounded, as desired.
\end{proof}

\subsubsection{Conclusion of proof of Theorem \ref{theorem4}}\label{sect:4.4.3}
In Section \ref{sect:4.1}, we proved Theorem \ref{theorem4} in the cases $(g,n)=(1,1)$ and $(0,4)$. For $g\geq1$ with $(g,n)\neq(1,1)$, in Section \ref{sect:4.4.1} we established the case $(g,n)$ of Theorem \ref{theorem4} under the assumption of the case $(g-1,n+2)$. For $g=0$ and $n\geq5$, in Section \ref{sect:4.4.2} we established the case $(0,n)$ of Theorem \ref{theorem4} under the assumption of the case $(0,n-1)$. Combined, this completes the proof of Theorem \ref{theorem4} for all cases $(g,n)$ with $3g+n-3>0$.

\subsection{Application} Let $\Sigma$ be a surface of type $(g,n)$ with $3g+n-3>0$, and let $A$ and $K$ be as in the hypothesis of Theorem \ref{theorem4}. The following corollary of Theorem \ref{theorem4} gives a generalization of McKean's finiteness theorem \cite{mckean} for length isospectral families of closed Riemann surfaces. It implies in particular that, for each $k\in\Z$, the set $X_k(\Sigma,\Z\setminus\{\pm2\})$ consists of finitely many mapping class group orbits; this is a strengthening of the first part of our main Diophantine result, Theorem \ref{theorem1}.

\begin{corollary}
\label{finiteness}
Assume that $A$ is discrete in $\R$ or $\C$. Then $X_K(A)$ consists of finitely many mapping class group orbits.
\end{corollary}

\begin{proof}
We first note that the coordinate ring of $X$ is generated by the trace functions of finitely many nondegenerate (simple closed) curves on $\Sigma$; this follows by applying Procesi's theorem (Fact \ref{fact}) with a choice of optimal sequence of generators for $\pi_1\Sigma$ (Definition \ref{optimal}). In particular, for each $k\in X(\del\Sigma,\C)$ the set $X_k(A)$ is a closed discrete subset of $X_k(\C)$ (see Section \ref{sect:2.2.2} for our notation $X(\del\Sigma,\C)$). Combining this with the compactness property from Theorem \ref{theorem4}, we see that $X_k(A)$ consists of only finitely many mapping class group orbits for any $k\in X(\del\Sigma,\C)$. (We remark that this is sufficient for Theorem \ref{theorem1}.) It only remains to prove the following.

\begin{nclaim}
There are at most finitely many $k\in K$ for which $X_k(A)$ is nonempty.
\end{nclaim}

Let us choose a compact subset $L\subset X(\C)$ satisfying the conclusion of Theorem \ref{theorem4}, so that $X_k(A)$ is nonempty if and only if $X_k(A)\cap L$ is nonempty. We need to consider different cases; let us first suppose that $(g,n)=(1,1)$. For any $k\in K$, we have the presentation
$$x^2+y^2+z^2-xyz-2=k$$
of $X_k$ as given in Section \ref{sect:2.3}. The image of $X_K(A)\cap L$ under the morphism $(x,y,z)$ to $\A^3$ is compact and discrete, and hence finite. In particular, the above equation shows that there are only finitely many possibilities for $k\in K$ such that $X_K(A)\cap L$ is nonempty, as desired.

Suppose next that $(g,n)\neq(1,1)$. A pants decomposition of $\Sigma$ shows that, for each boundary curve $c$ of $\Sigma$, there is an immersion $i:\Sigma'\to\Sigma$ of a surface $\Sigma'$ of type $(0,4)$ which is an embedding on the interior of $\Sigma'$ and sends a boundary curve of $\Sigma'$ to $c$. In particular, there exists a compact subset $K'\subset X(\del\Sigma',\C)$ depending only on $K$, $A$, and $L$ such that $i^*(\rho)\in X_{K'}(\Sigma',A)$ for every $\rho\in X_K(A)\cap L$; hence, we are reduced to proving the claim for surfaces of type $(0,4)$. So suppose that $\Sigma$ is of type $(0,4)$. For each $k\in K$, we have a presentation
$$x^2+y^2+z^2+xyz=ax+by+cz+d$$
of $X_k$ as given in Section \ref{sect:2.3}, where the coefficients $a,b,c,d$ are suitable functions of $k$. The images of $X_K(A)\cap L$ under the regular functions $x$, $y$, $z$ are each compact and discrete and hence finite. The same holds for the image of $X_K(A)\cap L$ under the regular function $a-yz-x$ (note that the value of $a-yz-x$ must lie in $A$ for any $\rho\in X(A)$, as seen by the explicit expression for Dehn twists given in Section \ref{sect:2.3}). Therefore, it follows that there are at most finitely many possible values of $a$ for elements of $X_K(A)\cap L$. A similar conclusion holds for $b$ and $c$, and therefore for $d$ as well by the above equation. We are thus reduced to showing that the polynomial map $\Phi=(a,b,c,d):\C^4\to\C^4$ given in coordinate functions $k_1,\dots,k_4$ of the domain by
$$\left\{\begin{array}{l}a=k_1k_2+k_3k_4\\b=k_1k_4+k_2k_3\\c=k_1k_3+k_2k_4\end{array}\right.\quad\text{and}\quad d=4-\sum_{i=1}^4k_i^2-\prod_{i=1}^4k_i$$
has finite fibers everywhere.

This can be seen, for example, as follows. We first note that the fiber of $\Phi$ above any point is the set of complex points of an affine algebraic variety, and hence must be either finite or uncountable. On the other hand, defining $\Psi:\C^4\to\C^4$ to be the function $\Psi(\theta_1,\dots,\theta_4)=(2\cos\pi\theta_1,\dots,2\cos\pi\theta_4)$, a result of Lisovyy-Tykhyy \cite[Proposition 10]{lt} shows that the set of all points in the fiber of $\Phi\circ\Psi$ over any given point lie in the single orbit of a countable group. \emph{A fortiori}, the fiber of $\Phi$ over any point is countable, and hence finite (being the set of complex points of an affine algebraic variety).
\end{proof}

\section{Further remarks}
\label{sect:5}

\subsection{Arithmetic hyperbolic surfaces}
\label{sect:5.1}
Below, we shall briefly discuss the relationship between the Diophantine geometry of the moduli spaces $X_k$ and arithmetic hyperbolic surfaces. First, in Section \ref{sect:5.1.1}, we shall recall some basic results on arithmetic hyperbolic surfaces, including a characterization of arithmetic lattices in $\PSL_2(\R)$ due to Takeuchi \cite{takeuchi} and a result on their finitude (for bounded covolume) due to Borel \cite{borel} and Takeuchi \cite{takeuchi2}. In Section \ref{sect:5.1.2}, we mention specific consequences for the integral points $X_k(\Z)$ of the moduli space for suitable values of $k$, and discuss the classical Markoff example where $(g,n)=(1,1)$.

\subsubsection{Arithmetic lattices and their finitude}\label{sect:5.1.1}

We first recall the characterization of arithmetic lattices in $\PSL_2(\R)$. A \emph{lattice} $L\leq\PSL_2(\R)$ is a discrete subgroup of finite covolume. Let $k\subset\R$ be a totally real number field of degree $d$ over $\Q$. Let $i_1,\dots,i_d:k\hookrightarrow\R$ be the embeddings of $k$ such that $i_1={\id}$. Let $Q$ be a quaternion algebra over $k$ which is split at $i_1$ and ramified at all the other archimedean places of $k$. The splitting induces an embedding $\rho:Q\to M_2(\R)$. Let $\Ocal$ be an order in $Q$, and let $\Ocal^1$ be the group of norm one elements in $\Ocal$. The image of $\rho(\Ocal^1)\leq\SL_2(\R)$ under the projection $\SL_2(\R)\to\PSL_2(\R)$, which we shall denote $L(Q,\Ocal)$, is a lattice in $\PSL_2(\R)$.

\begin{definition}
A lattice $L\leq\PSL_2(\R)$ is \emph{arithmetic} if it is commensurable with some $L(Q,\Ocal)$ as above, and is \emph{derived from a quaternion algebra} if it is moreover of finite index in some $L(Q,\Ocal)$.
\end{definition}

Takeuchi \cite{takeuchi} gave a characterization of arithmetic lattices in $\PSL_2(\R)$ as follows. Given a subset $M\subset\PSL_2(\R)$, we shall denote $\tr(M)=\{\pm\tr(a):a\in M\}$, where $\tr(a)$ is the trace of $a$ which is well-defined up to sign.

\begin{theorem}[Takeuchi]
\label{tak}
A lattice $L\leq\PSL_2(\R)$ is arithmetic (up to conjugation) if and only if it satisfies the conditions $(\textup{I})$ and $(\textup{II}_1)$ below, and is moreover derived from a quaternion algebra if and only if it satisfies $(\textup{I})$ and $(\textup{II}_2)$.
\begin{enumerate}
	\item[$(\textup{I})$] The \emph{trace field} $K_L=\Q(\tr(L))\subset\R$ has finite degree over $\Q$, and $\tr(L)$ is contained in the ring of integers of $K_L$.
	\item[$(\textup{II}_1)$] For any field embedding $i:K_L\to\C$ which is not the identity on the subfield $K_{L}^{(2)}\subseteq K_L$ generated by $\{\tr(\gamma)^2:\gamma\in L\}$, the set $i(\tr(L))$ is bounded in $\C$.
		\item[$(\textup{II}_2)$] For any field embedding $i:K_L\to\C$ which is not the identity, the set $i(\tr(L))$ is bounded in $\C$.
\end{enumerate}
\end{theorem}

One can show that $K_L^{(2)}=K_{L^{(2)}}$ where $L^{(2)}$ is the finite index subgroup of $L$ generated by $\{\gamma^2:\gamma\in L\}$ (see \cite{takeuchi}). It follows that a lattice $L\leq\PSL_2(\R)$ satisfying condition $(\textup{I})$ above is arithmetic if and only if $L^{(2)}$ is derived from a quaternion algebra. We note that an arithmetic lattice $L\leq\PSL_2(\R)$ is non-cocompact if and only if $L^{(2)}$ is conjugate to a subgroup of $\PSL_2(\Z)$, or equivalently $K_{L}^{(2)}=\Q$.

Quotients of the hyperbolic plane $\H^2$ by arithmetic lattices in $\PSL_2(\R)$ are referred to as \emph{arithmetic hyperbolic surfaces}. In general, the quotient $S_L=L\backslash\H^2$ of $\H^2$ by a lattice $L\leq\PSL_2(\R)$ is a complete hyperbolic surface of finite area, having finitely many cusps and orbifold singularities. By classical works of Fricke and Klein, the lattice $L$ is generated by hyperbolic elements $\alpha_1,\beta_1,\dots,\alpha_g,\beta_g$ and elliptic or parabolic elements $\gamma_1,\dots,\gamma_n$ with relations
\begin{align}
\label{latticepres}
[\alpha_1,\beta_1]\cdots[\alpha_g,\beta_g]\gamma_1\cdots\gamma_n=1,\quad\gamma_i^{e_i}=1\quad(i=1,\dots,n)
\end{align}
where $2\leq e_1\leq\cdots\leq e_n\leq\infty$ (with $e_i=\infty$ understood as saying that $\gamma_i$ is parabolic). The sequence $(g;e_1,\dots,e_n)$ is referred to as the \emph{signature} of the lattice $L$ or the hyperbolic surface $S_L$, and we have
$$\frac{\Vol(L\backslash\H^2)}{2\pi}=2g-2+\sum_{i=1}^n\left(1-\frac{1}{e_i}\right).$$
Let $\Sigma$ be a surface of genus $g$ with $n$ punctures, and let $X=X(\Sigma)$ be the moduli space of $\SL_2$-local systems on $\Sigma$. A diffeomorphism of $\Sigma$ onto the complement of orbifold points on a hyperbolic surface $S$ of signature $(g;e_1,\dots,e_n)$ gives rise to a conjugacy class of a representation $\pi_1\Sigma\to\PSL_2(\R)$, whose image is a lattice $L\leq\PSL_2(\R)$ with $S\simeq S_L$. Such a representation admits finitely many lifts to $\SL_2(\R)$ up to conjugacy, and we shall denote by $T(g;e_1,\dots,e_n)\subset X(\R)$ the set of representations so obtained over all hyperbolic surfaces $S$ of signature $(g;e_1,\dots,e_n)$. Each $T(g;e_1,\dots,e_n)$ is contained in a finite union of relative moduli spaces $X_k(\R)$, where the possible traces
$$k\in\{2\cos(2\pi r):r\in\Q\}^n\subseteq\A^n(\R)$$
are determined by $(g;e_1,\dots,e_n)$. Each connected component of $T(g;e_1,\dots,e_n)$ is isomorphic to the Teichm\"uller space of marked hyperbolic (orbifold) structures on $\Sigma$ with signature $(g;e_1,\dots,e_n)$, on which the mapping class group descent is classical. We shall show how our arguments may be used to derive a weak finiteness statement (Corollary \ref{wearfi}) for arithmetic lattices in $\PSL_2(\R)$, which is superseded by a stronger result of Borel \cite{borel} and Takeuchi \cite{takeuchi2} discussed further below.

\begin{proposition}
For $M\geq0$ and $D\geq0$, there are only finitely many isomorphism classes of lattices $L\leq\PSL_2(\R)$ derived from a quaternion algebra having covolume $\Vol(L\backslash\H^2)\leq M$ and trace field degree $[K_L:\Q]\leq D$.
\end{proposition}

\begin{proof}
First, given $D\geq0$ and $N\geq0$, there are at most finitely many algebraic integers of degree at most $D$ all of whose conjugates have absolute value at most $N$. It follows that there is a closed discrete subset $A_D\subset\R$ such that we have $\tr(L)\subseteq A_D$ for every arithmetic lattice $L\leq\PSL_2(\R)$ derived from a quaternion algebra with trace field degree $[K_L:\Q]\leq D$. A bound on the volume $\Vol(L\backslash\H^2)$ shows that there are at most finitely many possibilities $(g;e_1,\dots,e_n)$ for the signature. For each choice of signature, the set $T(g;e_1,\dots,e_n)\cap X(A)$ consists of at most finitely mapping class group orbits. The desired result follows.
\end{proof}

\begin{corollary}
\label{wearfi}
For $M\geq0$ and $D\geq0$, there are only finitely many isomorphism classes of arithmetic lattices $L\leq \PSL_2(\R)$ of covolume $\Vol(L\backslash\H^2)\leq M$ which are
\begin{enumerate}
	\item[\textup{(1)}] non-cocompact, or
	\item[\textup{(2)}] cocompact with trace field degree $[K_L:\Q]\leq D$.
\end{enumerate}
\end{corollary}

\begin{proof}
Given an upper bound on the covolume $\Vol(L\backslash\H^2)$, there are only finitely many possibilities for the signature of $L$. The index $[L:L^{(2)}]$ depends only on the signature of $L$. The previous proposition shows that there are at most finitely choices of $L^{(2)}$ up to conjugation, recalling that $K_{L}^{(2)}=\Q$ if $L$ is non-cocompact. As noted in \cite{takeuchi2}, for each choice of $L^{(2)}$ the normalizer $N_{L^{(2)}}$ of $L^{(2)}$ in $\PSL_2(\R)$ contains $L^{(2)}$ with finite index. Since $L\leq N_{L^{(2)}}$, there are at most finitely choices of $L$.
\end{proof}

An analogous result is obtained in \cite[Theorem 4.3]{bmr} in the context of arithmetic hyperbolic surface bundles. For cocompact lattices, the finiteness result of Borel \cite{borel} and Takeuchi \cite{takeuchi2} below removes the assumption on the trace field degree.

\begin{ntheorem}[\cite{borel},\cite{takeuchi2}]
The collection of volumes of arithmetic hyperbolic surfaces is closed and discrete in $\R$ with finite multiplicities.
\end{ntheorem}

We remark that, given a totally real number field $k\subset\R$ with ring of integers $\Ocal_k$, it is not necessarily true that a point of $T(g;e_1,\dots,e_n)\cap X(\Ocal_k)$ is associated to an arithmetic hyperbolic surface. This amounts to the observation that there are lattices in $\PSL_2(\R)$ satisfying condition $(\textup{I})$ of Theorem \ref{tak} but failing $(\textup{II}_1)$; examples are given in \cite{takeuchi2}. Thus, the above theorem does not imply the finitude of mapping class group orbits on $T(g;e_1,\dots,e_n)\cap X(\Ocal_k)$, except in the case $k=\Q$ which we shall discuss in further detail.

\subsubsection{Relation to integral points}\label{sect:5.1.2}
We now restrict our discussion to the integral points $T(g;e_1,\dots,e_n)\cap X(\Z)$. We shall say that the signature $(g;e_1,\dots,e_n)$ is of \emph{compact type} if each $e_i$ is finite (i.e.~the associated arithmetic hyperbolic surface is compact), and is of \emph{noncompact type} otherwise.

If $(g;e_1,\dots,e_n)$ is of noncompact type, then each point of $T(g;e_1,\dots,e_n)\cap X(\Z)$ corresponds to a representation $\pi_1\Sigma\to\SL_2(\R)$ whose image in $\PSL_2(\R)$ is conjugate to a finite index subgroup of $\PSL_2(\Z)$. As mentioned in \cite{lubotzky}, on combining results of Newman \cite{newman} and Moser-Wyman \cite{mw} one finds that the number $a_N$ of index $N$ subgroups in $\PSL_2(\Z)$ is asymptotically
$$a_N\sim(12\pi\sqrt{e})^{-1/2}\exp\left(\frac{N\log N}{6}-\frac{N}{6}+N^{1/2}+N^{1/3}+\frac{\log N}{2}\right).$$
In particular, it follows that, on the moduli spaces $X_k(\Sigma)$ as $\Sigma$ varies in type $(g,n)$ with $n\geq1$ and $k\in\{0,\pm1,\pm2\}^n$ with at least one component of $k$ equal to $\pm2$, the average number of nondegenerate integral orbits grows exponentially in $|\chi(\Sigma)|$. This is not \emph{a priori} obvious without the moduli interpretation of $X_k(\Sigma)$.

If $(g;e_1,\dots,e_n)$ is of compact type, then a point of $T(g;e_1,\dots,e_n)\cap X(\Z)$ corresponds to a representation $\pi_1\Sigma\to\SL_2(\R)$ whose image is conjugate to a finite index subgroup of the group $\Ocal^1$ of norm one elements in an order $\Ocal$ of a quaternion algebra over $\Q$ which is split at $\infty$. This suggests that the arithmetic of quaternion algebras and their orders may be used to study integral points of $X$. Here, we give an instance of this analysis for the classical Markoff example $(g,n)=(1,1)$. As discussed in Section \ref{sect:2}, the moduli space $X_k(\Sigma)$ in this case is an affine cubic algebraic surface with equation
$$x^2+y^2+z^2-xyz-2=k,\quad k\in\Z.$$
The geometry and mapping class group dynamics on the set of real points $X_k(\R)$ of the moduli space is described in \cite{goldman4}. To discuss the point of contact between arithmetic hyperbolic surfaces and integral points, we shall consider $k\in\{-2,-1,0,-1\}$. For each such $k$, Goldman \cite{goldman4} observed that $X_k(\R)$ consists of five connected components, each of which is mapping class group invariant: one component is compact and consists of unitary representations $\pi_1\Sigma\to\SU(2)$ having boundary trace $k$, while the remaining four components comprise the locus $T(1;e)$ for a suitable signature $(1;e)$ depending on $k$.

Suppose first that $k\in\{0,\pm1\}$, so that the signature $(1;e)$ is of compact type. We shall show that $T(1;e)\cap X_k(\Z)$ is empty. Suppose otherwise; given a representation $\rho:\pi_1\Sigma\to\SL_2(\R)$ with class in $T(1;e)\cap X_k(\Z)$, as observed in \cite{takeuchi} the $\Q$-vector space $\Q[\rho]$ spanned by the image of $\rho$ in the space $M_2(\R)$ of $2\times 2$ real matrices would form a quaternion algebra over $\Q$. Moreover, the submodule $\Z[\rho]\subset\Q[\rho]$ generated by the image of $\rho$ over $\Z$ is then an order of $\Q[\rho]$. (We shall refer to \cite{voight} (especially Chapter 22, Section 1, \emph{loc.cit.}) for generalities on quaternion orders and ternary quadratic forms used below.) Explicitly, in terms of the optimal sequence of generators $(\alpha,\beta,\gamma)$ of $\pi_1\Sigma$, we would have a basis $\{1,i,j,k\}$ of $\Z(\rho)$ given by $i=\rho(\alpha)$, $j=\rho(\beta)$, $k=(ij)^{-1}=\rho(\beta^{-1}\alpha^{-1})$ with
\begin{align*}
&i^2=xi-1 & jk=\bar i= x-i,\\
&j^2=yj-1 & ki=\bar j= y-j,\\
&k^2=zk-1 & ij=\bar k= z-k
\end{align*}
where $(x,y,z)=(\tr\rho(\alpha),\tr\rho(\beta),\tr\rho(\beta^{-1}\alpha^{-1}))$. The ternary quadratic form associated to $\Z[\rho]$ is then
$$X^2+Y^2+Z^2+xYZ+yXZ+zXY$$
whose discriminant is $4+xyz-x^2-y^2-z^2=2-k$. Hence, $2-k$ must be divisible by the discriminant of the quaterion algebra $\Q[\rho]$, where the latter is defined as the product of all places over which $\Q[\rho]$ is ramified. Recall that $\Q[\rho]$ is split at $\infty$, and $\Q[\rho]$ must be ramified at an even number of primes by quadratic reciprocity. Since $2-k\in\{1,2,3\}$ by our hypothesis that $k\in\{-1,0,1\}$, we conclude that the discriminant of $\Q[\rho]$ must be $1$; hence, $\Q[\rho]$ is isomorphic to the split algebra $M_2(\Q)$ and the image of $\rho$ lies in $\SL_2(\Z)$ up to conjugation. But then the image of $\rho$ cannot be cocompact; this contradiction shows us that $T(1;e)\cap X_k(\Z)$ is indeed empty. This discussion shows that any point on $X_k(\Z)$ for $k\in\{0,\pm1\}$ arises from a unitary representation, and we indeed find
\begin{align*}
X_{-1}(\Z)&=\Gamma\cdot\{(1,0,0)\},\\
X_{0}(\Z)&=\Gamma\cdot\{(1,1,0)\},\\
X_1(\Z)&=\emptyset.
\end{align*}
On the other hand, for $k=-2$, by Markoff descent we find that
$$X_k(\Z)=\Gamma\cdot\{(0,0,0),(3,3,3),(-3,-3,3),(-3,3,-3),(3,-3,-3)\}.$$
The compact component of $X_k(\R)$ in this case is just the singleton $\{(0,0,0)\}$. From this, we find that there exists exactly one arithmetic hyperbolic surface of signature $(1,\infty)$ derived from a quaternion algebra, namely, the quotient of $\H^2$ by the lattice
$$L=\left\langle\begin{bmatrix}1 & 1\\ 1 & 2\end{bmatrix},\begin{bmatrix}1 & -1\\ -1 & 2\end{bmatrix}\right\rangle\leq\PSL_2(\Z)$$
which is also called the \emph{modular torus} in the literature. This is found by noting that
$$(3,3,3)=\left(\tr\begin{bmatrix}1 & 1\\ 1 & 2\end{bmatrix},\tr\begin{bmatrix}1 & -1\\ -1 & 2\end{bmatrix},\tr\begin{bmatrix}0 & 1\\ -1 & 3\end{bmatrix}\right).$$
We remark that Takeuchi \cite{takeuchi2} gave a full classification of arithmetic hyperbolic surfaces of signature $(1,e)$. In a related context, Bowditch-Maclachlan-Reid \cite{bmr} classified arithmetic once-punctured torus bundles by considering imaginary quadratic integral points on $X_k(\Sigma)$ for $(g,n,k)=(1,1,-2)$.

\subsection{Faithful representations}\label{sect:5.2}
Let $\Sigma$ be a surface of type $(g,n)$ with $3g+n-3>0$ and $n\geq1$, and let $c_1,\dots,c_n$ be the boundary curves of $\Sigma$. Fix $k\in\A^n(\Z)$. Inspired by a suggestion of Curtis McMullen, we give another proof of the finitude of nondegenerate integral mapping class group orbits in $X_k(\Sigma,\Z)$ corresponding to \emph{faithful} representations $\pi_1\Sigma\to\SL_2(\R)$.

\begin{proposition}
\label{faithful}
The set of nondegenerate points of $X_k(\Sigma,\Z)$ corresponding to faithful representations consists of finitely many mapping class group orbits.
\end{proposition}

\begin{proof}
Let $\rho:\pi_1\Sigma\to\SL_2(\R)$ be a faithful representation with integral character having boundary traces $k$. The image of $\rho$ is a torsionfree discrete non-cocompact subgroup of $\SL_2(\R)$. Let $S_\rho$ denote the Nielsen core of the quotient hyperbolic surface $\H^2/\Img\rho$. For each $i=1,\dots,n$, the conjugacy class $d_i=\rho(c_i)$ in $\Img\rho$ gives rise to a unique closed (not necessarily simple) geodesic on $S_\rho$, also denoted $d_i$ for simplicity. Let $T_\rho\subset S_\rho$ denote a subsurface obtained by taking the union of sufficiently small closed tubular neighbourhoods of the geodesics $d_i$. Note that we may choose the neighbourhoods so that the lengths of the boundary curves of $T_\rho$ are bounded in terms of $k$.

Suppose first that all of the boundary curves of $S_\rho$ are each isotopic to a boundary curve of $T_\rho$, so that their lengths are bounded in terms $k$. Since the traces of elements in $\Img\rho$ are in $\Z$, it follows by an extension of McKean's theorem on finitude of length isospectral hyperbolic surfaces (which may be proved via mapping class group dynamics on Teichm\"uller locus by arguing as in the proof of Theorem \ref{theorem4} and its Corollary \ref{finiteness}) that there are only finitely many possibilities for $S_\rho$ up to isometry, or $\Img\rho$ up to conjugacy in $\SL_2(\R)$. Moreover, for each $i\in\{1,\dots,n\}$ there are only finitely many possibilities for the conjugacy class $d_i$ in $\Img\rho$. Moreover, given $\rho$ (and hence the data of $(\Img\rho,d_1,\dots,d_n)$ as above), by the Dehn-Nielsen-Baer theorem any isomorphism
$$\rho':\pi_1\Sigma\xrightarrow{\sim}\Img\rho$$
sending the conjugacy class $c_i$ to $d_i$ is $\Gamma(\Sigma)$-equivalent to $\rho$. This shows that there are at most finitely many mapping class group orbits in $X_k(\Sigma,\Z)$ corresponding to those $\rho$ such that each boundary curve of $S_\rho$ is isotopic to a boundary curve of $T_\rho$.

To complete our proof, it therefore suffices to show that, if there is a boundary curve of $S_\rho$ which is not isotopic to a boundary curve of $T_\rho$, then $\rho$ is degenerate. Suppose $e\subset\del S_\rho$ is a boundary curve with such property. By our hypothesis, there is an embedded subsurface $\Sigma_1\subset S_\rho$ of type $(0,3)$ which is disjoint from $T_\rho$ and contains $e$ as a boundary curve. Let us label the boundary curves $e_1=e$, $e_2$, and $e_3$. Let us choose a base point of $S_\rho$ lying in $\Sigma_1$, and let $(\gamma_1,\gamma_2,\gamma_3)$ be an optimal sequence of generators for $\pi_1\Sigma_1$ with $\gamma_i$ corresponding to $e_i$. Let us choose a marking of the hyperbolic structure on $S_\rho$, and consider the associated representation
$$\sigma:\pi_1S_\rho\simeq\Img\rho\hookrightarrow\SL_2(\R).$$
Our idea is to construct a suitable deformation $\sigma_t:\pi_1S_\rho\to\SL_2(\C)$ of $\sigma=\sigma_0$ which keeps the traces $\tr\sigma_t(d_i)$ constant for $i=1,\dots,n$ (cf.~proof of \cite[Theorem 3.2]{whang4}). This is done as follows. Note first that $\sigma|\Sigma_1$ is irreducible, and hence we may assume, up to global conjugation in $\SL_2(\C)$, that
$$\sigma(\gamma_2)=\begin{bmatrix}\mu &0\\0 & \mu^{-1}\end{bmatrix}$$
for some $\mu\in\C\setminus\{0,\pm1\}$ while $\sigma(\gamma_1)$ is neither upper triangular nor lower triangular. Consider the family of representations
$$\sigma_t':\pi_1\Sigma_1\to\SL_2(\C)$$
given by
$$\sigma_t'(\gamma_1)=\sigma(\gamma_1)\begin{bmatrix}1 & -t\\ 0 & 1\end{bmatrix},\quad \sigma_t'(\gamma_2)=\begin{bmatrix}1 & t\\ 0 &1\end{bmatrix}\sigma'(\gamma_2)$$
for $t\in\C$. Note that $\sigma_0'=\sigma|\Sigma_1$, and for each $t\in\C$ we have
$$\sigma_t'(\gamma_3)=\sigma(\gamma_3)\quad\text{and}\quad\tr\sigma_t'(\gamma_2)=\tr\sigma(\gamma).$$
Denoting by $\Sigma_2$ the connected component of $S_\rho|e_2$ containing the boundary curve $e$ (which we take to mean $S_\rho$ itself if $e_2$ is a boundary curve of $S_\rho$), we note that $\sigma_t'$ extends easily to a family $\sigma_t'':\pi_1\Sigma_2\to\SL_2(\C)$ of representations such that $\sigma_t''|_{\Sigma_1}=\sigma_t'$ and $\sigma_t''|_{\Sigma_1^{\circ}}=\sigma|_{\Sigma_1^{\circ}}$ where $\Sigma_1^{\circ}$ is the complement of $\Sigma_1$ in $\Sigma_2$ (possibly empty). We next extend $\sigma_t''$ to a family $\sigma_t:\pi_1 S_\rho\to\SL_2(\C)$ by considering three cases.
\begin{itemize}
	\item Suppose $a_2$ is a boundary curve of $S_\rho$. Set $\sigma_t=\sigma_t''$ and we are done.
	\item Suppose $a_2$ is a separating curve of $S_\rho$, and let $\Sigma_2^\circ$ be the component of $S_\rho|a_2$ complementary to $\Sigma_2$. We define $\sigma_t$ by requiring
	$$\sigma_t\Sigma_2=\sigma_t'',\quad \sigma_t|{\Sigma_2^{\circ}}=\begin{bmatrix}1 & \frac{-t\mu^{-1}}{\mu-\mu^{-1}}\\0 & 1\end{bmatrix}\sigma|_{\Sigma_2^\circ}\begin{bmatrix}1 & \frac{-t\mu^{-1}}{\mu-\mu^{-1}}\\0 & 1\end{bmatrix}^{-1}.$$
	By construction, we have
	$$\sigma_t(\gamma_2)=\begin{bmatrix}1 & \frac{-t\mu^{-1}}{\mu-\mu^{-1}}\\0 & 1\end{bmatrix}\sigma(\gamma_2)\begin{bmatrix}1 & \frac{-t\mu^{-1}}{\mu-\mu^{-1}}\\0 & 1\end{bmatrix}^{-1}$$
	so that $\rho_t$ is well defined by our discussion in Section \ref{sect:2.2.3}.
	\item Suppose $a_2$ is a nonseparating curve of $S_\rho$. Let $\beta$ be a simple based loop on $S_\rho$ which intersects $\gamma_2$ exactly once transversally, oriented as in the part of Section \ref{sect:2.2.3} (with the pair $(\gamma_2,\beta)$ playing the role of $(\alpha,\beta)$ there). Let $\gamma_2'$ be the based loop on $\Sigma_2$ (like $\alpha_2'$ in Section \ref{sect:2.2.3}) whose image in $S_\rho$ lies in the homotopy class of $\delta^{-1}\gamma_2\delta$. We define the representation $\rho_t$ by specifying the pair $(\sigma_t|_{\Sigma_2},\sigma_t(\delta))$ as in the discussion of Section \ref{sect:2.2.3}, where
	$$\sigma_t|_{\Sigma_2}=\rho_t',\quad \sigma_t(\delta)=\begin{bmatrix}1 & \frac{-t\mu^{-1}}{\mu-\mu^{-1}}\\ 0 & 1\end{bmatrix}\sigma(\delta).$$
	We have $\sigma_t(\delta^{-1})\sigma_t(\gamma_2)\sigma_t(\delta)=\sigma(\delta^{-1})\sigma(\gamma_2)\sigma(\delta)=\sigma(\gamma_2')=\sigma_t(\gamma_2')$ by our construction, so $\sigma_t$ is well defined.
\end{itemize}
We thus have a family of representations $\sigma_t:\pi_1S_\rho\to\SL_2(\C)$ such that $\sigma_0=\sigma$, and moreover note that $\tr\sigma_t(d_i)$ is constant for all $t\in\C$ by construction since $\Sigma_1$ is disjoint from $T_\sigma$. Considering the assignment sending $t$ to the composition
$$\rho_t=\sigma_t\circ\rho:\pi_1\Sigma\xrightarrow{\sim}\pi_1 S_\rho\to\SL_2(\C),$$
we obtain a map $\A^1\to X_k(\Sigma)$ which is easily seen to be polynomial, and contains the class of $\rho$ in its image. Moreover, since $\tr\sigma_t(e)$ is nonconstant, it follows that $\rho_t$ is nonconstant as well, showing that $\rho$ is degenerate as desired.
\end{proof}

\end{document}